\documentclass[12pt,a4paper, draft]{article}
\usepackage[dvipdfm, margin=2cm]{geometry}
\usepackage[tbtags]{amsmath}%
\usepackage{amsthm}
\usepackage{amssymb}%
\usepackage{amsfonts}
\usepackage{mathrsfs}
\usepackage{color}
\allowdisplaybreaks[4]
\numberwithin{equation}{section}
\newtheorem{theorem}{Theorem}[section]
\newtheorem{proposition}[theorem]{Proposition}

\newtheorem{lemma}[theorem]{Lemma}
\theoremstyle{definition}
\newtheorem{remark}[theorem]{Remark}

\newtheorem{definition}[theorem]{Definition}

\DeclareMathOperator{\ad}{ad}

\DeclareMathOperator{\End}{End}

\DeclareMathOperator{\Rad}{Rad}
\DeclareMathOperator{\Res}{Res}

\DeclareMathOperator{\wt}{wt}

\newcommand{\BF}{\mathbb{F}}
\newcommand{\BN}{\mathbb{N}}
\newcommand{\BZ}{\mathbb{Z}}
\newcommand{\BQ}{\mathbb{Q}}
\newcommand{\BC}{\mathbb{C}}

\newcommand{\spanf}{\mathrm{span}_\BF}

\newcommand{\fg}{\mathfrak{g}}
\newcommand{\fh}{\mathfrak{h}}
\newcommand{\D}{\mathcal{D}}

\newcommand{\B}{\mathcal{B}}

\newcommand{\bk}{\mathbf{k}}

\newcommand{\1}{\mathbf{1}}

\newcommand{\?}{\hspace*{1.346em}}
\makeatletter
\newcommand{\leqnomode}{\tagsleft@true}
\newcommand{\reqnomode}{\tagsleft@false}
\makeatother
\makeatletter
\newtoks\@enLab  
\def\@enQmark{?}
\def\@enLabel#1#2{%
	\edef\@enThe{\noexpand#1{\@enumctr}}%
	\@enLab\expandafter{\the\@enLab\csname the\@enumctr\endcsname}%
	\@enloop}
\def\@enSpace{\afterassignment\@enSp@ce\let\@tempa= }
\def\@enSp@ce{\@enLab\expandafter{\the\@enLab\space}\@enloop}
\def\@enGroup#1{\@enLab\expandafter{\the\@enLab{#1}}\@enloop}
\def\@enOther#1{\@enLab\expandafter{\the\@enLab#1}\@enloop}
\def\@enloop{\futurelet\@entemp\@enloop@}
\def\@enloop@{%
	\ifx A\@entemp         \def\@tempa{\@enLabel\Alph  }\else
	\ifx a\@entemp         \def\@tempa{\@enLabel\alph  }\else
	\ifx i\@entemp         \def\@tempa{\@enLabel\roman }\else
	\ifx I\@entemp         \def\@tempa{\@enLabel\Roman }\else
	\ifx 1\@entemp         \def\@tempa{\@enLabel\arabic}\else
	\ifx \@sptoken\@entemp \let\@tempa\@enSpace         \else
	\ifx \bgroup\@entemp   \let\@tempa\@enGroup         \else
	\ifx \@enum@\@entemp   \let\@tempa\@gobble          \else
	\let\@tempa\@enOther
	\fi\fi\fi\fi\fi\fi\fi\fi
	\@tempa}
\newlength{\@sep} \newlength{\@@sep}
\providecommand{\sfbc}{\rmfamily\upshape}
\providecommand{\sfn}{\rmfamily\upshape}
\def\@enfont{\ifnum \@enumdepth >1\let\@nxt\sfn \else\let\@nxt\sfbc \fi\@nxt}
\def\enumerate{%
	\ifnum \@enumdepth >3 \@toodeep\else
	\advance\@enumdepth \@ne
	\edef\@enumctr{enum\romannumeral\the\@enumdepth}\fi
	\@ifnextchar[{\@@enum@}{\@enum@}}
\def\@@enum@[#1]{%
	\@enLab{}\let\@enThe\@enQmark
	\@enloop#1\@enum@
	\ifx\@enThe\@enQmark\@warning{The counter will not be printed.%
		^^J\space\@spaces\@spaces\@spaces The label is: \the\@enLab}\fi
	\expandafter\edef\csname label\@enumctr\endcsname{\the\@enLab}%
	\expandafter\let\csname the\@enumctr\endcsname\@enThe
	\csname c@\@enumctr\endcsname7
	\expandafter\settowidth
	\csname leftmargin\romannumeral\@enumdepth\endcsname
	{\the\@enLab\hskip\labelsep}%
	\@enum@}
\def\@enum@{\list{{\@enfont\csname label\@enumctr\endcsname}}%
	{\usecounter{\@enumctr}\def\makelabel##1{\hss\llap{##1}}%
		\ifnum \@enumdepth>1\setlength{\topsep}{\@@sep}\else
		\setlength{\topsep}{\@sep}\fi
		\ifnum \@enumdepth>1\setlength{\itemsep}{0pt plus1pt minus1pt}%
		\else \setlength{\itemsep}{\@@sep}\fi
		\setlength{\parsep}{0pt plus1pt minus1pt}%
		\setlength{\parskip}{0pt plus1pt minus1pt}
}}

\def\endenumerate{\par\ifnum \@enumdepth >1\addvspace{\@@sep}\else
	\addvspace{\@sep}\fi \endlist}
\makeatother                                                            %

\begin{document}
\title{Lattice vertex algebras of type ADE over fields of prime characteristic and their representations}
\author{ Qiang Mu\thanks{Supported by the Heilongjiang Provincial NSF (grant PL2025A004).}\quad and \quad Hongju Zhao}
\date{School of Mathematical Sciences, Harbin Normal University,\\
Harbin 150025, China}
\maketitle

\begin{abstract}
We study lattice vertex algebras of type ADE over an algebraically closed 
field $\BF$ of prime characteristic $p>2$ and their representations.
Let $L$ be a root lattice of type ADE, and $G_{L}$ the Gram matrix of $L$.
When $\det G_{L}\not \equiv 0\pmod{p}$, we establish an isomorphism between 
the lattice vertex algebra $V_{L,\BF}$ and the level-one simple affine vertex 
algebra of the same type.
Via this isomorphism, we classify the irreducible $\BN$-graded modules of 
$V_{L,\BF}$ viewed as an $\BN$-graded vertex algebra.
We also consider the case where $L$ is of type $A_n$ with $\det G_L=n+1\equiv 0\pmod{p}$.
We show that $V_{L,\BF}$ is not simple and determine the simple $\BN$-graded 
quotient of $V_{L,\BF}$.
Furthermore, when $n+1=ap$ for some $a\in \BZ_+$ with $\gcd(a,p)=1$, 
we give the classification of the irreducible $\BN$-graded modules for 
the simple quotient of $V_{L,\BF}$.
\end{abstract}

\section{Introduction}

Vertex (operator) algebras associated with even lattices are among the 
most important objects in vertex algebra theory and provide many 
fundamental examples (see, for example,  \cite{D,DG,DLM,FLM,LW}).
For instance, the well-known Moonshine vertex operator algebra $V^{\natural}$ 
is constructed from the vertex operator algebra associated to the Leech 
lattice (see \cite{FLM}).

Let $L$ be a positive definite even lattice with a basis $\{\gamma^1,\ldots,\gamma^d\}$ 
and a bilinear form $\langle\cdot,\cdot\rangle$.
Over a field of characteristic zero, 
one can construct the lattice vertex operator algebra $V_L$
associated with $L$,
called the lattice vertex operator algebra.
However, this approach does not directly extend to fields of prime characteristic.
The integral form provides a common tool for passing from characteristic 
zero to prime characteristic.
For example, in \cite{DG}, Dong and Griess introduced an integral form of 
the lattice vertex operator algebra $V_{L}$, denoted by $V_{L,\BZ}$.
Subsequently, Mu studied lattice vertex algebras over fields $\BF$ of 
prime characteristic $p$ obtained from $V_{L,\BZ}$, 
namely $V_{L,\BF}:=V_{L,\BZ}\otimes_{\BZ}\BF$, 
and proved that if $\det G_L\not\equiv 0 \pmod{p}$, then $V_{L,\BF}$ is 
a simple vertex operator algebra (see \cite{Mu}). 
Here, $G_{L}=(\langle\gamma^i,\gamma^j\rangle)$ is the Gram matrix of $L$.

Note that the lattice vertex algebra of type ADE is isomorphic to the 
level-one simple affine vertex algebra of the same type over a field 
of characteristic zero (see, for example, \cite{Kac2}).
This isomorphism allows one to investigate the representation theory 
of lattice vertex algebras via that of affine vertex algebras.
Motivated by this, we study lattice vertex algebras of type ADE over 
fields of prime characteristic and their representations, 
building on the results of \cite{DG} and \cite{Mu}.

Let $\BF$ be an algebraically closed field of prime characteristic $p>2$.
Let $\fg$ be a Lie algebra of type ADE over $\BF$, and let $L$ be the 
root lattice of $\fg$.
We first consider the case where $\fg$ is simple; that is, $p\nmid n+1$ 
if $\fg$ is of type $A_n$, and $p>3$ if $\fg$ is of type $E_6$ (see \cite[\S 0.13]{H1995}).
Under these assumptions, $\det G_{L}\not\equiv 0\pmod{p}$.
Let $L_{\hat{\fg}}(1,0)$ be the corresponding level-one simple 
affine vertex algebra over $\BF$.  
We prove that
\begin{equation}\label{eq:iso-1}
V_{L,\BF}\cong L_{\hat{\fg}}(1,0)
\end{equation}
as vertex algebras (see Theorem \ref{th:iso-1}).
Using the isomorphism \eqref{eq:iso-1}, we then classify the irreducible 
$\BN$-graded modules of $V_{L,\BF}$ viewed as an $\BN$-graded vertex algebra.
Specifically, recall from \cite{JLM} that for $\ell\in \{0,1,\ldots,p-1\}$, 
$L_{\widehat{\mathfrak{sl}}(2,\BF)}(\ell,0)$ has at most $\ell +1$ 
irreducible $\BN$-graded modules.
We show that the simple vertex algebra $L_{\widehat{\fg}}(1,0)$ has 
at most $\det G_L$ irreducible $\BN$-graded modules, generalizing the 
result in \cite{JLM}.
On the other hand, since $\det G_{L}\not\equiv 0\pmod{p}$ and 
$|L^*/L|=\det G_{L}$, by \cite[Theorem 3.8]{ZM}, 
there exist $\det G_{L}$ inequivalent irreducible $\BN$-graded modules for $V_{L,\BF}$.
Hence, via the isomorphism \eqref{eq:iso-1}, 
these $\det G_{L}$ irreducible $\BN$-graded modules exhaust all 
the irreducible $\BN$-graded modules of $V_{L,\BF}$ (see Theorem \ref{th:clas-type-ADE}).

We now consider the case where $L$ is a root lattice of type $A_n$ 
with $\det G_{L}=n+1\equiv 0\pmod{p}$.
We begin by examining the simplicity of $V_{L,\BF}$.
Recall that in characteristic zero, there is a unique nondegenerate 
symmetric invariant bilinear form $(\cdot,\cdot)_{V_{L}}$ on the 
vertex operator algebra $V_{L}$ such that 
$(\1,\1)_{V_{L}}=1$ (see \cite{L}).
From this, a symmetric bilinear form $(\cdot,\cdot)_{V_{L,\BF}}$ on 
$V_{L,\BF}$ is defined accordingly (see \cite{Mu}).
Set $\Rad(\cdot,\cdot)_{V_{L,\BF}}:=\{u\in V_{L,\BF}\mid (u,v)_{V_{L,\BF}}=0
\text{ for all }v\in V_{L,\BF}\}$.
We prove that $\Rad(\cdot,\cdot)_{V_{L,\BF}}$ is a nontrivial ideal of 
$V_{L,\BF}$ (see Lemma \ref{lem:Rad}).
Using an argument analogous to that in \cite{Mu} (see also \cite[Proposition 3.6]{ZM}), 
we deduce that $V_{L,\BF}/\Rad(\cdot,\cdot)_{V_{L,\BF}}$ is a simple ($\BN$-graded) 
vertex algebra (see Lemma \ref{lem:simp-quot-latt}).
Moreover, we determine the maximal $\BN$-graded ideal $\Rad(\cdot,\cdot)_{V_{L,\BF}}$.
Concretely, since $p\mid n+1$, we write $n+1=ap^r$ for some $a,r\in \BZ_+$ with $\gcd(a,p)=1$.
Let $\Pi:=\{\alpha_1,\ldots,\alpha_n\}$ be the simple roots of $\mathfrak{sl}(n+1,\BF)$, 
which form a basis of $L$.
Set $\eta:=\alpha_1+2\alpha_2+\cdots+n\alpha_n$, and
let 
\begin{equation*}
	I_{L}=\spanf\{s_{\alpha^1,m_1}\ldots s_{\alpha^k,m_k}s_{\eta,m}\iota(e_{\alpha})\mid 
\alpha\in L,\alpha^i\in \Pi, m_i\in \BZ_+, k\in \BN, m\in \BZ_+\text{ with }p^r\nmid m\},
\end{equation*}
where  $s_{\gamma,m}$ is the coefficient of $x^{m}$ in $E^{-}(-\gamma,x)$ for 
$\gamma\in L$ (see \eqref{eq:def-E+E-}).
Then we show that $I_{L}\subset \Rad(\cdot,\cdot)_{V_{L,\BF}}$ (see Lemma \ref{lem:4.4}),
and we further prove that $\Rad(\cdot,\cdot)_{V_{L,\BF}}=I_{L}$ (see Theorem \ref{th:I-L}).

We next study the irreducible modules for the simple quotient 
$V_{L,\BF}/\Rad(\cdot,\cdot)_{V_{L,\BF}}$.
Recall from \cite{SF} that when $p\mid n+1$, the Lie algebra 
$\mathfrak{sl}(n+1,\BF)$ has a one-dimensional ideal $\BF I_{n+1}$, 
and $\mathfrak{psl}(n+1,\BF):=\mathfrak{sl}(n+1,\BF)/\BF I_{n+1}$ is simple, 
where $I_{n+1}$ is the identity matrix of size $n+1$.
We prove that
\begin{equation}\label{eq:iso-2}
V_{L,\BF}/\Rad(\cdot,\cdot)_{V_{L,\BF}}\cong L_{\widehat{\mathfrak{psl}}(n+1,\BF)}(1,0)
\end{equation}
as vertex algebras (see Theorem \ref{th:iso-2}).
Now assume that $r=1$, i.e., $n+1=ap$ for some $a\in \BZ_+$ with $\gcd(a,p)=1$.
 Using the isomorphism \eqref{eq:iso-2}, 
 we show that $V_{L,\BF}/\Rad(\cdot,\cdot)_{V_{L,\BF}}$ has exactly $a$
 inequivalent irreducible $\BN$-graded modules, namely, 
 the modules listed in \eqref{eq:irr-quo-mod} (see Theorem \ref{th:clas-type-A-quo-kp}).

The paper is organized as follows. 
In Section \ref{sec:2}, we recall the construction of the lattice vertex 
algebra $V_{L,\BF}$ and its modules, and provide some relevant results.
Section \ref{sec:3} is devoted to the case where $L$ is of type ADE with 
$\det G_{L}\not\equiv 0\pmod{p}$.
We classify the irreducible $\BN$-graded modules for $V_{L,\BF}$ in this case.
In Section \ref{sec:4}, we treat the case where $L$ is a root lattice of type 
$A_n$ with $n+1\equiv 0\pmod{p}$.
We show that $V_{L,\BF}$ is not simple and determine the maximal $\BN$-graded 
ideal of $V_{L,\BF}$.
Furthermore, when $n+1=ap$ for some $a\in \BZ_{+}$ with $\gcd(a,p)=1$, 
the classification of the irreducible $\BN$-graded modules for the simple 
quotient of $V_{L,\BF}$ is given.

In this paper, let $\BC$, $\BQ$, $\BZ$, $\BZ_+$, and $\BN$ denote the sets of 
complex numbers, rational numbers, integers, positive integers, 
and nonnegative integers, respectively.
Furthermore, let $\BF$ be an algebraically closed field of prime characteristic $p>2$.

\section{Lattice vertex algebras and their modules}\label{sec:2}

In this section, we review the construction of the vertex algebra $V_{L,\BF}$ 
over $\BF$ associated to a positive definite even lattice $L$ and its 
modules (see \cite{DG, Mu}). 
We also give some relevant results.

We will follow the conventions of \cite{FLM} and \cite{LL} for the lattice 
vertex operator algebra $V_L$ over $\BC$, the Heisenberg vertex operator 
subalgebra $M(1)$ of $V_{L}$, and $V_L$-modules.
Let $L$ be a positive definite even lattice with a basis 
$\{\gamma^1,\ldots,\gamma^d\}$ and a bilinear form $\langle\cdot,\cdot \rangle$.
Then $(V_L,Y,\mathbf{1},\omega)$ is a simple vertex operator algebra over 
$\BC$ of central charge $c=\operatorname{rank}L$. 
Here, $\mathbf{1}=1\otimes\iota(e_{0})$ is the vacuum vector.
The vertex operator is given by
\begin{align}
Y(h,x)&=h(x)=\sum_{m\in \BZ}h(m)x^{-m-1}\;\text{ for }h\in \fh,\\
Y(\iota(e_{\alpha}),x)&=E^{-}(-\alpha,x)E^{+}(-\alpha,x)e_{\alpha}x^{\alpha} \;
    \text{ for }\alpha\in L,\label{eq:ver-ope}
\end{align}
where $\fh=L\otimes_{\BZ}\BC$, and
\begin{equation}\label{eq:exp-E-E+}
E^{-}(-\alpha,x)=\exp\bigg(\sum_{m\in\BZ_+}\frac{\alpha(-m)}{m}x^{m}\bigg),
\quad E^{+}(-\alpha,x)=\exp\bigg(\sum_{m\in\BZ_+}\frac{-\alpha(m)}{m}x^{-m}\bigg).
\end{equation}
The conformal vector is given by
\begin{equation}\label{eq:conf-ve}
\omega=\frac{1}{2}\sum_{i=1}^{d}\gamma^{i}(-1)\beta^{i}(-1)\mathbf{1},
\end{equation}
where $\{\beta^{1},\ldots,\beta^d\}$ is a basis of $\fh$ 
dual to the basis $\{\gamma^1,\ldots, \gamma^d\}$ with respect to $\langle\cdot,\cdot\rangle$.

Let $L^*$ be the dual lattice of $L$, i.e., 
$L^*=\{\beta\in L\otimes_{\BZ} \BQ\mid \langle \beta,\alpha\rangle \in \BZ\text{ for all }\alpha\in L\}$.
For $\beta\in L^*$, 
one can similarly construct a $V_{L}$-module
\begin{equation}
V_{\beta+L}=S(\hat{\fh}_{-})\otimes \BC\{\beta+L\},
\end{equation}
where $\hat{\fh}_{-}=t^{-1}\fh[t^{-1}]$, 
and $\BC\{\beta+L\}=\operatorname{span}_{\BC}\{\iota(e_{\beta+\alpha})\mid \alpha\in L\}$.
Moreover, each irreducible $V_{L}$-module is isomorphic to 
$V_{\beta+L}$ for some $\beta+L\in L^*/L$ (see \cite[Theorem 3.1]{D}).

Following \cite{DG} and \cite{Mu}, for $\alpha\in L^*$, 
we write
\begin{equation}\label{eq:def-E+E-}
E^{-}(-\alpha,x)=\sum_{m\in \BN}s_{\alpha,m}x^{m},\quad   
E^{+}(-\alpha,x)=\sum_{m\in\BN}r_{\alpha,m}x^{-m}.
\end{equation}
In particular, $s_{\alpha,0}=r_{\alpha,0}=1$, 
$s_{\alpha,1}= \alpha(-1)$ and $r_{\alpha,1}=-\alpha(1)$.
We adopt the convention that $s_{\alpha,m}=r_{\alpha,m}=0$ for $m<0$.
For $\alpha,\beta\in L^*$,
we have (see, for example, \cite[(6.3.31)]{LL})
\begin{equation}\label{eq:E-ab=E-aE-b}
	E^{-}(-\alpha-\beta,x)=E^{-}(-\alpha,x)E^{-}(-\beta,x).
\end{equation}
For $\alpha,\beta\in L^*$, \cite[Proposition 6.3.14]{LL} gives
\begin{equation}\label{eq:E+E-E-E+}
E^{+}(-\alpha,x_1)E^{-}(-\beta,x_2)=\biggl(1-\frac{x_2}{x_1}\biggr)^{\langle \alpha,\beta\rangle}E^{-}(-\beta,x_2)E^{+}(-\alpha,x_1).
\end{equation}
Consequently, we obtain
\begin{equation}\label{eq:rs-sr}
r_{\alpha,n}s_{\beta,m}=\sum_{i\in \BN}(-1)^{i}\binom{\langle \alpha,\beta\rangle}{i}s_{\beta,m-i}r_{\alpha,n-i}
\end{equation}
for $m,n\in\BN$, as operators on $V_{L}$.

We recall the following result.

\begin{lemma}[{\cite[Theorem 3.3]{DG}}]\label{DG3.3}
Let
\begin{equation}\label{eq:M}
\mathcal{S}:=\big\{s_{\alpha^1,m_1}\cdots s_{\alpha^k,m_k}\bigm|
k\in\BN, \alpha^i\in\{\gamma^1,\ldots,\gamma^d\},
m_i\in\BZ_+, 1\le i\le k\big\}.
\end{equation}
When $k=0$, the product in \eqref{eq:M} is understood to be the
vacuum vector $\mathbf{1}$.
\begin{enumerate}[(i)]
\item The $\BZ$-span
\begin{equation*}
M(1)_{\mathbb Z}:=\operatorname{span}_{\BZ}\mathcal{S}
\end{equation*}
is an integral form of the vertex algebra $M(1)$.

\item Let $\mathcal{S}_L:=\{h\iota(e_\alpha)\mid h\in\mathcal{S}, \alpha\in L\}$.
Then the $\BZ$-span
\begin{equation*}
V_{L,\BZ}:=\operatorname{span}_{\BZ}\mathcal{S}_L
\end{equation*}
is an integral form of the vertex algebra $V_L$.
\end{enumerate}
\end{lemma}

It follows from Lemma~\ref{DG3.3} that
\begin{equation*}
V_{L,\BF}:=V_{L,\BZ}\otimes_{\BZ}\BF
\end{equation*}
is a vertex algebra over $\BF$, and that
\begin{equation*}
M(1)_{\BF}:=M(1)_{\BZ}\otimes_{\BZ}\BF
\end{equation*}
is a vertex subalgebra of $V_{L,\BF}$. Moreover, $\mathcal{S}$ and $\mathcal{S}_L$
form $\BF$-bases of $M(1)_{\BF}$ and $V_{L,\BF}$, respectively.
Furthermore, for $\beta\in L^*$, let $V_{\beta+L,\BZ}$
be the $\BZ$-span of $\{h\iota(e_{\beta+\alpha})\mid h\in\mathcal{S}, \alpha\in L\}$.
It follows from \cite[Theorem 4.3]{McRae} that $V_{\beta+L,\BZ}$ is 
an integral form of the $V_{L}$-module $V_{\beta+L}$ relative to $V_{L,\BZ}$,
and $V_{\beta+L,\BF}:=V_{\beta+L,\BZ}\otimes_{\BZ}\BF$ is a $V_{L,\BF}$-module.

The $\BN$-grading of $V_{L,\BZ}$ naturally induces a corresponding grading on $V_{L,\BF}$, i.e.,
\begin{equation}\label{eq:N-grad}
	V_{L,\BF}=\bigoplus_{m\in \BN}(V_{L,\BF})_{(m)},
\end{equation}
where $(V_{L,\BF})_{(m)}$ is spanned by elements 
$s_{\alpha^1,m_1} \cdots s_{\alpha^k,m_k} \iota(e_{\alpha})$
with $k \in \BN$, $m_i \in \BZ_+$, $\alpha^i \in\{\gamma^1,\ldots,\gamma^d\}$, $\alpha \in L$,
and $m_1 + \cdots + m_k + \frac{\langle \alpha, \alpha \rangle}{2} = m$.
One easily checks that  $V_{L,\BF}$ equipped with the $\BN$-grading 
\eqref{eq:N-grad} is an $\BN$-graded vertex algebra.
On the other hand, $V_{L,\BF}$ is $L$-graded, which is induced from 
the $L$-grading on $V_{L,\BZ}$ (see \cite[Theorem 4.3]{McRae}).
Thus, $V_{L,\BF}$ carries an $L\times\BN$-graded vertex algebra structure.

The following lemma was obtained in \cite{DG}. 

\begin{lemma}[{\cite[Lemma 3.2]{DG}}]\label{lem:DG-s--alpha}
For $\alpha\in L$ and $n\in \BN$, the element $s_{-\alpha,n}$ is a 
$\BZ$-linear combination of products of the elements $s_{\alpha,m}$
with $m \in \BN$ and $m \le n$.
\end{lemma}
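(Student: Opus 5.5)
The plan is to use the product formula \eqref{eq:E-ab=E-aE-b} with $\beta=-\alpha$. Since $E^{-}(0,x)=1$, we get
\begin{equation*}
E^{-}(-\alpha,x)\,E^{-}(\alpha,x)=1 .
\end{equation*}
Here $E^{-}(\alpha,x)=\sum_{m\in\BN}s_{-\alpha,m}x^m$ is the series whose coefficients we want to control. So $E^{-}(\alpha,x)$ is the inverse, in the ring of formal power series in $x$ with coefficients in the commutative algebra $S(\hat{\fh}_{-})$, of $E^{-}(-\alpha,x)=\sum_{m}s_{\alpha,m}x^m$. This inverse exists because $E^{-}(-\alpha,x)$ has constant term $s_{\alpha,0}=1$.

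The key step is to compare coefficients of $x^n$ for $n\ge1$. This gives
\begin{equation*}
\sum_{i=0}^{n}s_{\alpha,i}\,s_{-\alpha,n-i}=0 .
\end{equation*}
Since $s_{\alpha,0}=1$, it can be rewritten as the recursion
\begin{equation*}
s_{-\alpha,n}=-\sum_{i=1}^{n}s_{\alpha,i}\,s_{-\alpha,n-i},
\end{equation*}
with initial value $s_{-\alpha,0}=1$. Induction on $n$ then shows that $s_{-\alpha,n}$ is a $\BZ$-linear combination of products $s_{\alpha,m_1}\cdots s_{\alpha,m_k}$ with each $m_j\le n$. Indeed, each $s_{-\alpha,n-i}$ with $i\ge1$ is, by the inductive hypothesis, such a combination with indices $\le n-i<n$. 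Multiplying it by $s_{\alpha,i}$, which has $i\le n$, keeps all indices at most $n$. The case $n=0$ is the empty product $1=s_{\alpha,0}$.

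Equivalently, one can expand $\bigl(1+\sum_{m\ge1}s_{\alpha,m}x^m\bigr)^{-1}$ as the geometric series $\sum_k(-1)^k\bigl(\sum_{m\ge1}s_{\alpha,m}x^m\bigr)^k$. This gives the explicit formula
\begin{equation*}
s_{-\alpha,n}=\sum_{k\ge0}(-1)^k\sum_{\substack{m_1+\cdots+m_k=n\\ m_j\ge1}}s_{\alpha,m_1}\cdots s_{\alpha,m_k},
\end{equation*}
in which all coefficients are visibly integers. There is no real obstacle; the only point to check is that all the $s_{\gamma,m}$ lie in the commutative algebra $S(\hat{\fh}_-)$. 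They do, since they are polynomials in the mutually commuting operators $\gamma(-j)$, $j\ge1$, so the formal inversion is legitimate.
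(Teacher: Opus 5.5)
Your proof is correct. It uses the identity $E^{-}(-\alpha,x)E^{-}(\alpha,x)=E^{-}(0,x)=1$ from \eqref{eq:E-ab=E-aE-b}, the resulting recursion $s_{-\alpha,n}=-\sum_{i=1}^{n}s_{\alpha,i}s_{-\alpha,n-i}$, and induction on $n$, which is essentially the standard argument behind \cite[Lemma 3.2]{DG}; the paper itself only cites that result and gives no proof.
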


\begin{theorem}[{\cite[Theorem 1]{Mu}}]\label{th:Mu-th-1}
$V_{L,\BF}$ as a vertex algebra is generated by $\{\iota(e_{\pm \gamma^i})\mid 1\le i\le d\}$.
\end{theorem}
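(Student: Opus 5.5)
The statement is Theorem~\ref{th:Mu-th-1}: $V_{L,\BF}$ is generated as a vertex algebra by $\{\iota(e_{\pm\gamma^i})\mid 1\le i\le d\}$. Let $U$ be the vertex subalgebra generated by these elements. Since $\mathcal{S}_L$ is an $\BF$-basis of $V_{L,\BF}$, it suffices to show that every $s_{\alpha^1,m_1}\cdots s_{\alpha^k,m_k}\iota(e_\alpha)$ lies in $U$. We proceed in three stages:
\begin{enumerate}[(1)]
\item obtain all $\iota(e_\alpha)$, $\alpha\in L$;
\item obtain the $s_{\gamma^i,m}$ acting on vectors;
\item obtain arbitrary products applied to $\iota(e_\alpha)$.
\end{enumerate}

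For stage (1), we use the vertex operator formula \eqref{eq:ver-ope}. For $\alpha,\beta\in L$,
\[
Y(\iota(e_\beta),x)\iota(e_\alpha)=\epsilon(\beta,\alpha)\,x^{\langle\beta,\alpha\rangle}E^{-}(-\beta,x)\iota(e_{\alpha+\beta}),
\]
since $E^{+}(-\beta,x)$ acts trivially on $\iota(e_\alpha)$. Reading off coefficients, we get
\[
\iota(e_\beta)_{n}\iota(e_\alpha)=\pm\, s_{\beta,\,-n-1-\langle\beta,\alpha\rangle}\,\iota(e_{\alpha+\beta}),
\]
and the sign $\pm1$ is a unit in $\BF$. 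Taking $n=-1-\langle\beta,\alpha\rangle$ gives $\iota(e_{\alpha+\beta})$ up to sign. By induction on the word length of $\alpha$ in $\pm\gamma^i$, every $\iota(e_\alpha)$ lies in $U$.

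For stages (2) and (3), the same formula with $\beta=\pm\gamma^i$ and general $n$ gives
\[
s_{\pm\gamma^i,m}\,\iota(e_{\alpha\pm\gamma^i})\in U
\]
for all $m$. To attach further factors to vectors that already involve $s$'s, we take $u=\iota(e_{\gamma^i})$ and $v=h\iota(e_{\alpha})\in U$, where $h$ is a product of $s$'s. Then
\[
u_n v=\pm\sum_{j} s_{\gamma^i,j}\,\bigl(r_{\gamma^i,\bullet}h\bigr)\iota(e_{\alpha+\gamma^i}),
\]
where we pass $E^{+}$ through $h$ using \eqref{eq:rs-sr}. The term with the top value of $j$ is $s_{\gamma^i,m}h\,\iota(e_{\alpha+\gamma^i})$. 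The remaining terms have lower degree in the $s$-filtration, measured by the total weight $m_1+\dots+m_k$ together with the number of factors. So an induction on the $\BN$-grading \eqref{eq:N-grad} together with this filtration shows that
\[
s_{\gamma^i,m}h\,\iota(e_{\alpha+\gamma^i})\in U.
\]
Next we apply $\iota(e_{-\gamma^i})_{n'}$ to return the lattice label to $\alpha$. This gives $s_{-\gamma^i,\bullet}s_{\gamma^i,m}h\,\iota(e_\alpha)$ plus lower terms. We then need to cancel the unwanted $s_{-\gamma^i}$ factor. The cleanest route avoids this: realize $s_{\gamma^i,m}h\,\iota(e_\alpha)$ directly as $\iota(e_{\gamma^i})_n$ applied to a vector of the form $h\,\iota(e_{\alpha-\gamma^i})$, which is in $U$ by induction. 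The product formula \eqref{eq:E-ab=E-aE-b} and Lemma~\ref{lem:DG-s--alpha} let us move between $s_{\gamma}$ and $s_{-\gamma}$ integrally whenever that is needed.

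The main obstacle is controlling the lower-order terms integrally. After commuting $E^{+}(-\gamma^i,x)$ past $h$, the correction terms carry binomial coefficients $\binom{\langle\gamma^i,\beta\rangle}{j}$ from \eqref{eq:rs-sr}. We must check that these corrections are integral combinations of basis vectors of strictly smaller filtration degree, so that the induction closes over $\BZ$ and hence over $\BF$. The leading coefficient must also be $\pm1$, since otherwise the argument fails in characteristic $p$. We plan to use a lexicographic order on $(m_1+\dots+m_k,\,k)$ and verify unitriangularity. The key point is that the top coefficient of $E^{-}(-\gamma^i,x)$ multiplying $h$ contributes exactly $s_{\gamma^i,m}$ with coefficient $1$.
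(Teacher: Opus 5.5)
Your stage (1) and the single-factor case are correct, and building $\mathcal{S}_L$ by applying $\iota(e_{\gamma^i})_n$ to vectors $h\,\iota(e_{\alpha-\gamma^i})$ is a workable idea. However, the triangularity you rely on is false as stated. Passing $E^{+}$ through $h$ gives
\[
\iota(e_{\gamma^i})_n\,h\,\iota(e_{\alpha-\gamma^i})=\pm\sum_{t\ge 0}s_{\gamma^i,m+t}\,\bigl(r_{\gamma^i,t}h\bigr)\,\iota(e_{\alpha}),\qquad m=-n-1-\langle\gamma^i,\alpha-\gamma^i\rangle .
\]
So the wanted term $s_{\gamma^i,m}h$ is the one with the \emph{smallest} index on the new factor, not the top value of $j$. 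Every correction term has exactly the same total weight, because $\iota(e_{\gamma^i})_n$ is homogeneous. The number of factors need not drop either, and a correction can even coincide with the target monomial. For instance, take $\gamma=\gamma^i$, $c=\langle\gamma,\gamma\rangle$, $h=s_{\gamma,2}$, and choose $n$ so that $m=1$. Then
\[
\iota(e_{\gamma})_n\,s_{\gamma,2}\,\iota(e_{\alpha-\gamma})=\pm\Bigl((1-c)\,s_{\gamma,1}s_{\gamma,2}+\binom{c}{2}s_{\gamma,3}\Bigr)\iota(e_{\alpha}).
\]
The $t=1$ term lands on the target itself, so the effective leading coefficient is $1-c$, not $\pm1$. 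For $c=4$ and $p=3$ this vector is actually $0$ in $V_{L,\BF}$. Hence a lexicographic order on $(m_1+\cdots+m_k,k)$ does not make the system unitriangular, and the induction you describe does not close. Your claim that the leading coefficient is $1$ holds only for the $t=0$ term, not for the coefficient of the target monomial in the whole expansion.

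What is missing is the right choice of which factor to peel off. In the expansion above, each correction contains the factor $s_{\gamma^i,m+t}$ with $m+t>m$. If you always take $s_{\gamma^i,m}$ to be a factor of \emph{maximal} index in the target monomial, then no correction can equal the target, and the leading coefficient is exactly $\epsilon=\pm1$. Every correction is then a $\BZ$-combination (via \eqref{eq:rs-sr}) of monomials of the same weight $w$ but strictly larger maximal index. Two nested inductions now close the argument:
\begin{enumerate}[(a)]
\item an outer induction on $w$, which gives $h\,\iota(e_{\alpha-\gamma^i})\in U$ since $\deg h=w-m<w$, with base case your stage (1);
\item a descending inner induction on the maximal index $m\le w$, with base case $m=w$, where $h=\mathbf{1}$ and there are no corrections because $r_{\gamma^i,t}\mathbf{1}=0$ for $t\ge 1$.
\end{enumerate}
With this repair your argument is correct. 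It is a more elementary alternative to the proof in \cite{Mu}, which the paper only cites and whose method it reuses in Proposition~\ref{prop:Mu-Heisen-gen}. There one first gets $s_{\gamma^i,n}\mathbf{1}=\pm\iota(e_{\gamma^i})_{k}\iota(e_{-\gamma^i})\in U$ for suitable $k$. One then uses the formula \eqref{eq:Y-s-alpha-n} for $Y(s_{\gamma^i,n},x)$, with an induction on $n$, to show that $U$ is stable under the operators $s_{\gamma^i,n}$. Your route avoids that formula entirely, at the price of the careful ordering above.
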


For $\alpha\in L$, recall from \cite[(5)]{Mu} that
\begin{align*}
\sum_{n\in \BN}Y(s_{\alpha,n},x)x_0^n=E^{-}(-\alpha,x+x_0)E^{-}(\alpha,x)E^{+}(\alpha,x)E^{+}(-\alpha,x+x_0)\bigg(\frac{x+x_0}{x}\bigg)^{\alpha}
\end{align*}
on $V_{L,\BF}$.
For $n\in\BZ_+$, it follows that 
\begin{align}\label{eq:Y-s-alpha-n}
Y(s_{\alpha,n},x)&=\sum_{j=0}^{n}\sum_{i=0}^{n-j}\binom{\alpha }{j}x^{-j}\bigg(\sum_{k\in\BN }\binom{k}{n-j-i}s_{\alpha,k}x^{k-n+j+i}\bigg)E^{-}(\alpha,x)E^{+}(\alpha,x)\nonumber\\
&\?\times \bigg(\sum_{m\in\BN}\binom{-m}{i}r_{\alpha,m}x^{-m-i}\bigg)\nonumber\\
&=\sum_{k,m\in\BN}\binom{\alpha+k-m}{n}s_{\alpha,k}E^-(\alpha,x)E^+(\alpha,x)r_{\alpha,m}x^{k-m-n}
\end{align}
on $V_{L,\BF}$, where $\binom{\alpha+k-m}{n}(u\otimes\iota(e_{\gamma}))
=\binom{\langle\alpha,\gamma\rangle+k-m}{n}u\otimes\iota(e_{\gamma})$ for $u\in M(1)_{\BF}$ and $\gamma\in L$.
Note that $M(1)_{\BF}$ is a vertex subalgebra of $V_{L,\BF}$.
Then we obtain the following result.
\begin{proposition}\label{prop:Mu-Heisen-gen}
The vertex algebra $M(1)_{\BF}$ is generated by 
$\{s_{\gamma^{i},p^{k}}\mid 1\le i\le d,k\in \BN\}$.
\end{proposition}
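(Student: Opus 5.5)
The plan is to let $U$ be the vertex subalgebra of $M(1)_{\BF}$ generated by $\{s_{\gamma^{i},p^{k}}\mid 1\le i\le d,\ k\in\BN\}$ and to show $\mathcal{S}\subset U$. By Lemma \ref{DG3.3}, $\mathcal{S}$ is a basis of $M(1)_{\BF}$, so this suffices. The argument splits into two inductions on weight: first, every single $s_{\gamma^i,n}$ lies in $U$; second, $U$ contains all products $s_{\alpha^1,m_1}\cdots s_{\alpha^k,m_k}\mathbf 1$.

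\emph{Step 1 (single elements).} I will use the translation operators. For $u\in M(1)_{\BF}$ we have $Y(u,z)\mathbf 1=e^{zD}u$, and the divided powers $D^{(j)}u=u_{-j-1}\mathbf 1$ are defined integrally, so $U$ is stable under all $D^{(j)}$. Since $D\alpha(-m)=m\alpha(-m-1)$ and $D\mathbf 1=0$, one gets on the Heisenberg part
\[
e^{zD}E^{-}(-\alpha,x)\mathbf 1=E^{-}(-\alpha,x+z)E^{-}(\alpha,z)\mathbf 1 .
\]
Comparing coefficients of $x^{m}z^{j}$ yields
\[
D^{(j)}s_{\alpha,m}=\binom{m+j}{j}s_{\alpha,m+j}+\sum_{0\le i<j}\binom{m+i}{i}s_{\alpha,m+i}\,s_{-\alpha,j-i}.
\]
For $n$ not a power of $p$ and $\alpha=\gamma^{i}$, write $n=p^{k}+j$, where $p^{k}$ is the largest power of $p$ not exceeding $n$. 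By Lucas' theorem, $\binom{n}{j}\equiv d_k\not\equiv0\pmod p$, where $d_k$ is the leading $p$-adic digit of $n$. In the correction terms, $s_{-\alpha,j-i}$ is, by Lemma \ref{lem:DG-s--alpha}, a $\BZ$-combination of products of $s_{\alpha,l}$ with $l\le j-i<n$. Hence every correction term is a product of elements $s_{\alpha,l}$ of total weight $n$, each factor having index $<n$.

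\emph{Step 2 (products).} Here I show $U$ is closed under the "commutative" products of basis elements modulo lower terms. Using \eqref{eq:Y-s-alpha-n} restricted to $M(1)_{\BF}$ (where $\binom{\alpha+k-m}{n}$ becomes $\binom{k-m}{n}$), I compute $(s_{\alpha,a})_{-1}w$ for a monomial $w\in\mathcal S$. The term with $k=a$, $m=0$ gives $s_{\alpha,a}w$. All other terms involve $r_{\alpha,m}$ with $m>0$ acting on $w$. By \eqref{eq:rs-sr}, these strictly lower the "$s$-content" of $w$, so they are combinations of monomials with fewer or lower-index factors. An induction on (weight, number of factors) then shows $s_{\alpha^1,m_1}\cdots s_{\alpha^k,m_k}\mathbf 1\in U$ whenever each factor lies in $U$.

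Combining the two steps, a simultaneous induction on $n$ (the weight) shows that all $s_{\gamma^i,n}$ and all their products lie in $U$. Dividing the identity of Step 1 by the unit $\binom{n}{j}$ then gives $s_{\gamma^i,n}\in U$ at each stage.

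I expect the main obstacle to be Step 2: one must check carefully that the error terms in $(s_{\alpha,a})_{-1}w-s_{\alpha,a}w$ are genuinely of lower order in a well-founded ordering compatible with the induction in Step 1. The reason is that $r_{\alpha,m}$ acting through $E^{-}(\alpha,x)E^{+}(\alpha,x)$ produces mixed terms. If this becomes messy, the fallback is to order monomials by weight and then by the maximal index $m_i$, and to verify directly from \eqref{eq:rs-sr} that each correction either lowers the maximal index or lowers the number of factors of maximal index.
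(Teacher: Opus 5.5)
\textbf{There is a genuine gap, and it is in Step 2.} Step 1 is fine. Your identity for $D^{(j)}s_{\alpha,m}$ is exactly the paper's \eqref{eq:Mu-13} evaluated at $\mathbf{1}$, since $Y(s_{\alpha,p^t},x)\mathbf{1}=e^{xD}s_{\alpha,p^t}$, and the Lucas-theorem unit $\binom{n}{p^k}$ is the same.

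Step 2 fails because $(s_{\alpha,a})_{-1}w-s_{\alpha,a}w$ is not lower than $s_{\alpha,a}w$ in any ordering of $\mathcal S$.

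\emph{What the correction terms look like.} They have the form $s_{\alpha,k}s_{-\alpha,q}(\cdots)w$. Here $(\cdots)$ contains annihilators $r_{\alpha,m}$ and also $r_{-\alpha,q'}$ (coming from $E^{+}(\alpha,x)$). Moreover $k+q=a+m+q'$, so $k+q$ exceeds $a$ by exactly the weight removed from $w$. Once $s_{-\alpha,q}$ is re-expanded in $\mathcal S$, these terms can reproduce $s_{\alpha,a}w$ itself and can have larger maximal index.

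\emph{A concrete counterexample.} Let $N=\langle\alpha,\alpha\rangle$. In characteristic zero,
\[
(s_{\alpha,2})_{-1}s_{\alpha,2}=s_{\alpha,2}^2+N\bigl(\alpha(-1)\alpha(-3)+\alpha(-4)\bigr)\mathbf{1}.
\]
Rewriting the power sums in the basis $\mathcal S$ gives, already over $\BZ$,
\[
(s_{\alpha,2})_{-1}s_{\alpha,2}=(1-2N)\,s_{\alpha,2}^{2}+4N\,s_{\alpha,4}-N\,s_{\alpha,1}s_{\alpha,3}+N\,s_{\alpha,1}^{2}s_{\alpha,2}.
\]
Take $N=2$ and $p=3$ (for example $L=A_1$). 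Then the coefficient of $s_{\alpha,2}^2$ vanishes in $\BF$, so this $(-1)$-product contains no $s_{\alpha,2}^2$ term at all. Separately, the term $4N\,s_{\alpha,4}$ raises the maximal index, which defeats your fallback ordering by weight and then maximal index.

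\emph{Consequence for the induction.} The simultaneous induction is circular. Step 1 at weight $n$ needs weight-$n$ products of the $s_{\alpha,l}$ with $l<n$. Step 2 at weight $n$ throws up $s_{\alpha,n}$, and even the target product itself, as error terms.

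\textbf{The missing idea is to work with multiplication operators rather than vectors.} That is, prove $s_{\alpha,m}U\subset U$ by induction on $m$.

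\emph{Splitting the field.} On $M(1)_{\BF}$, \eqref{eq:Y-s-alpha-n} reads $Y(s_{\alpha,n},x)=\sum_{i=0}^{n}A_{n-i}(x)B_i(x)$, where:
\begin{itemize}
\item $A_q(x)=\bigl(\sum_k\binom{k}{q}s_{\alpha,k}x^{k-q}\bigr)E^{-}(\alpha,x)$ involves only nonnegative powers of $x$;
\item $B_i(x)$ involves only powers $x^{\le -i}$;
\item $A_0=B_0=1$.
\end{itemize}

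\emph{The induction.} From $Y(s_{\alpha,p^t},x)U\subset U((x))$ and the inductive hypothesis one gets $A_{p^t}(x)U\subset U[[x]]$, which is \eqref{eq:Mu-9}. The $x^{m-p^t}$-coefficient of $A_{p^t}(x)$ is the operator $\sum_i\binom{m-i}{p^t}s_{\alpha,m-i}s_{-\alpha,i}$. This is precisely your Step 1 identity, read as an operator identity instead of being applied to $\mathbf{1}$. The terms with $i\ge1$ already preserve $U$ (using Lemma~\ref{lem:DG-s--alpha} for $s_{-\alpha,i}$). Since $\binom{m}{p^t}\not\equiv 0 \pmod p$, this yields $s_{\alpha,m}U\subset U$. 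Closure of $U$ under products of basis monomials is then immediate, and Step 2 is no longer needed.

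In the small example above, this amounts to using the non-$(-1)$ mode $(s_{\alpha,1})_{-2}=\alpha(-2)$. It acts as multiplication by $2s_{\alpha,2}-s_{\alpha,1}^2$, which gives $s_{\alpha,2}^2\in U$.
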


\begin{proof}
We denote by $U$ the subalgebra of $M(1)_{\BF}$ generated 
from $\mathbf{1}$ by $\{s_{\gamma^i,p^k}\mid 1\le i\le d, k\in \BN\}$.
Let $\alpha\in \{\gamma^1,\ldots,\gamma^d\}$.
We first use induction on $n$ to show that the following two inclusions hold:
\begin{align}
\bigg(\sum_{k\in \BN}\binom{k}{n}s_{\alpha,k}x^{k-n}\bigg)E^{-}(\alpha,x)U&\subset U((x)),\label{eq:Mu-9}\\
E^{+}(\alpha,x)\bigg(\sum_{m\in\BN}\binom{-m}{n}r_{\alpha,m}x^{-m-n}\bigg)U&\subset U((x)).\label{eq:Mu-10}
\end{align}
Note that if \eqref{eq:Mu-9} holds for some $n$, then
\begin{equation}\label{eq:Mu-11}
s_{\alpha,n}U\subset U,
\end{equation}
which implies
\begin{equation}\label{eq:Mu-12}
s_{\alpha,n}\in U,
\end{equation}
since $\mathbf{1}\in U$.

Setting $n=1$ in \eqref{eq:Y-s-alpha-n}, we have
\begin{equation*}
    Y(s_{\alpha,1},x)=\bigg(\sum_{k\ge0} \binom{k}{1}s_{\alpha,k}x^{k-1}\bigg)E^-(\alpha,x)
    +E^+(\alpha,x)\bigg(\sum_{m\ge0} \binom{-m}{1}r_{\alpha,m}x^{-m-1}\bigg).
\end{equation*}
Since $Y(s_{\alpha,1},x)U\subset U((x))$, we see that \eqref{eq:Mu-9} 
and \eqref{eq:Mu-10} hold for $n=1$.
Suppose that \eqref{eq:Mu-9} and \eqref{eq:Mu-10} hold for $1\le n <p^{t}$.
We use induction on $n$ to show that \eqref{eq:Mu-9} and \eqref{eq:Mu-10} 
hold for $p^t\le n< p^{t+1}$.
Since $Y(s_{\alpha,p^t},x)U\subset U((x))$,
it follows from \eqref{eq:Y-s-alpha-n} and the induction hypothesis 
that \eqref{eq:Mu-9} and \eqref{eq:Mu-10} hold for $n=p^t$.
A direct calculation shows that
\begin{equation}\label{eq:Mu-13}
\bigg(\sum_{k\ge p^t}\binom{k}{p^t}s_{\alpha,k}x^{k-p^t}\bigg)E^{-}(\alpha,x)
=\sum_{m\ge p^t}\sum_{i=0}^{m-p^t}\binom{m-i}{p^t}s_{\alpha,m-i}s_{-\alpha,i}x^{m-p^t}.
\end{equation}
Suppose that \eqref{eq:Mu-9} and \eqref{eq:Mu-10} hold for $0\le n<m$, 
where $p^t\le m <p^{t+1}$.
Then $s_{-\alpha,n}U\subset U$ for $0\le n<m$ by Lemma \ref{lem:DG-s--alpha}.
By \eqref{eq:Mu-13}, we have
\begin{equation}
\sum_{i=0}^{m-p^t}\binom{m-i}{p^t}s_{\alpha,m-i}s_{-\alpha,i}U\subset U.
\end{equation}
Consequently, $\binom{m}{p^t}s_{\alpha,m}U\subset U$ by the induction hypothesis.
Since $p^t\le m<p^{t+1}$, we have $\binom{m}{p^t}\not\equiv 0\pmod{p}$, 
hence \eqref{eq:Mu-11} holds for $n=m$.
Then \eqref{eq:Mu-12} holds for $n=m$, so that $Y(s_{\alpha,m},x)U\subset U((x))$.
By \eqref{eq:Y-s-alpha-n} and the induction hypothesis, 
we see that \eqref{eq:Mu-9} and \eqref{eq:Mu-10} hold for $n=m$,
completing our induction.

Now for any $\alpha\in \{\gamma^1,\ldots,\gamma^d\}$ and $n\in \BN$, 
we see that $s_{\alpha,n}\in U$.
By induction on $k$, it follows from \eqref{eq:Mu-11} that 
$s_{\alpha^1,n_1}\cdots s_{\alpha^k,n_k}\in U$ for 
$\alpha^j\in \{\gamma^1,\ldots,\gamma^d\}$ and $n_j,k\in \BN$.
Thus, $M(1)_{\BF}=U$, as desired.
\end{proof}

Let $G_{L}=(\langle\gamma^i,\gamma^j\rangle)_{d\times d}$ be the Gram matrix of $L$.
For later use, we recall the following results.
\begin{theorem}[{\cite[Theorem 13]{Mu}}]\label{th:Mu-Theor}
	  If $\det G_{L}\not \equiv 0 \pmod{p}$, then $V_{L,\BF}$ is a simple vertex algebra.
\end{theorem}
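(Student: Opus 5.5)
Since this theorem is quoted from \cite[Theorem 13]{Mu}, I will reconstruct its proof along the lines of \cite{Mu}, using the invariant bilinear form $(\cdot,\cdot)_{V_{L,\BF}}$ obtained by reduction mod $p$. Over $\BC$ the form $(\cdot,\cdot)_{V_L}$ with $(\1,\1)=1$ takes integer values on $V_{L,\BZ}$. It respects the $L\times\BN$-grading: $V_{\alpha}$ pairs only with $V_{-\alpha}$, and the form vanishes between different degrees. Reducing mod $p$ therefore gives a symmetric invariant form on $V_{L,\BF}$ with $(\1,\1)=1$.

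The structure of the argument is as follows. First I will show that any proper ideal $J$ of $V_{L,\BF}$ lies in $\Rad(\cdot,\cdot)_{V_{L,\BF}}$. The standard reason is that invariance gives $(u,v)=(u_{-1}\1,v)$, which is a multiple of $(\1,w)$ for some $w$ in the ideal generated by $u$. Since $(\1,w)$ is the coefficient of $\1$ in the weight-zero component, an ideal not containing $\1$ pairs trivially with $\1$; here one uses that $(V_{L,\BF})_{(0)}=\BF\1$. Consequently it suffices to prove that the form is \emph{nondegenerate}. Because of the graded structure, this amounts to showing, for each $\alpha\in L$ and each weight $m$, that the Gram matrix of the form between $(V_{L,\BF})_{\alpha,(m)}$ and $(V_{L,\BF})_{-\alpha,(m)}$ is nonsingular mod $p$.

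Next I will reduce to the Heisenberg part. The vector $\iota(e_\alpha)$ and its partner $\iota(e_{-\alpha})$ pair to $\pm1$, a unit. The form on $S(\hat\fh_-)\otimes e_\alpha$ equals the Heisenberg form twisted by the adjoint action, and in the integral basis $\mathcal{S}e_\alpha$ it reduces, up to unitriangular changes, to the Heisenberg form on $M(1)_\BZ$. So the main task is to compute the determinant of the Gram matrix of $M(1)_\BZ$ in the basis $\mathcal{S}$, with $s_{\gamma,m}$ paired against the dual operators $r$. Relation \eqref{eq:rs-sr} shows that $r_{\alpha,n}s_{\beta,m}\1$ is expressed through $\langle\alpha,\beta\rangle$. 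I expect that, after ordering, the Gram matrix on each weight space factorizes into tensor or symmetric powers of $G_L$, so that its determinant is, up to sign, a power of $\det G_L$.

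The hardest step is this determinant computation. My first approach will be a change of basis. Since $\det G_L\not\equiv0\pmod p$, the dual basis $\beta^i$ lies in $L^*\otimes\BZ_{(p)}$. Working over $\BZ_{(p)}$, the divided-power-type elements built from the $s_{\beta^i,m}$ should form another basis of $M(1)_{\BZ_{(p)}}$. On a single rank-one Heisenberg algebra, the Schur-function orthogonality of $E^-/E^+$ coefficients, namely $(s_{\gamma,m}\cdots,r\cdots)$, makes the pairing between the $s_{\gamma^i}$-monomials and the $s_{\beta^j}$-monomials unitriangular. The change of basis between the two monomial systems is a polynomial in $G_L^{\pm1}$, and it is invertible over $\BZ_{(p)}$ exactly when $p\nmid\det G_L$. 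Therefore the form is nondegenerate, the radical is zero, and $V_{L,\BF}$ is simple.
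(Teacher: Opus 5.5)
Your overall route is the one behind \cite[Theorem 13]{Mu} as this paper uses it: nondegeneracy of the reduced form when $p\nmid\det G_L$, plus simplicity of the quotient by the radical (cf.\ Lemma~\ref{lem:simp-quot-latt}). The gap is in your first step. The claim that an ideal not containing $\1$ pairs trivially with $\1$ is justified only for \emph{graded} ideals. Indeed, $(\1,w)$ is the $\1$-coefficient of $w$, and nothing you say rules out an element $\1+v\in J$ with $v$ of positive weight while $\1\notin J$. In characteristic $0$ this cannot happen because ideals are $L(0)$-stable. Over $\BF$, however, $L(0)$ only detects weights modulo $p$, so ideals are not a priori graded, and ``simple'' here refers to arbitrary ideals. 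As written, you only exclude nontrivial graded ideals.

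The repair uses nondegeneracy together with the weight bookkeeping in the invariance identity. For $a$ of weight $k$ one has $(a,w)=(-1)^k\sum_{j\ge0}(\1,(L(1)^{(j)}a)_{2k-j-1}w)$; this survives reduction mod $p$ because $L(1)^{(j)}V_{L,\BZ}\subset V_{L,\BZ}$, as shown in Section~\ref{sec:4}. Given $0\ne w\in J$, let $w_m$ be its component of largest weight, and pick $a$ of weight $m$ with $(a,w_m)\ne0$. Then $z=(-1)^m\sum_j(L(1)^{(j)}a)_{2m-j-1}w\in J$ has all its components in weights $\le 0$. Hence $z\in(V_{L,\BF})_{(0)}=\BF\1$, and $z=(\1,z)\1=(a,w_m)\1\ne0$. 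So $\1\in J$ and $J=V_{L,\BF}$; this also makes your detour through ``every proper ideal lies in $\Rad$'' unnecessary.

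Your nondegeneracy argument is the right one; it is how \cite{Mu} proceeds, cf.\ Lemma~\ref{lem:3.11} and \cite[Proposition 3.6]{DG}. However, you leave unproved the one fact where the hypothesis enters, and the reason you offer is not sufficient. What you need is $s_{\beta,m}\in M(1)_{\BZ_{(p)}}$ for $\beta=\sum_j c_j\gamma^j$ with $c_j\in\BZ_{(p)}$. This follows from $E^-(-\beta,x)=\prod_j E^-(-\gamma^j,x)^{c_j}$ together with $\binom{c}{k}\in\BZ_{(p)}$ for $c\in\BZ_{(p)}$ (this is \cite[Lemma 4]{Mu}). Saying the entries are ``polynomials in $G_L^{\pm1}$'' does not give it, since these binomial coefficients carry denominators $k!$.

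Once you have integrality, you need neither unitriangularity nor a determinant formula. With $\langle\beta^i,\gamma^j\rangle=\delta_{ij}$, the Cauchy identity gives $(s_{\lambda^1}(-\beta^1)\cdots s_{\lambda^d}(-\beta^d),\,s_{\mu^1}(\gamma^1)\cdots s_{\mu^d}(\gamma^d))=\prod_i\delta_{\lambda^i,\mu^i}$. So on each weight space, the Gram matrix of the $\BZ$-basis $\{s_{\mu^1}(\gamma^1)\cdots s_{\mu^d}(\gamma^d)\}$ of $M(1)_\BZ$ has an inverse with entries in $\BZ_{(p)}$, and its reduction mod $p$ is nonsingular. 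Passing from $M(1)_\BF$ to $V_{L,\BF}$ is immediate, since $(h\iota(e_\alpha),h'\iota(e_{-\alpha}))=(h,h')$ for $h,h'\in M(1)_\BF$; no unitriangular correction is needed there either.
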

\begin{theorem}[{\cite[Theorem 3.8]{ZM}}]\label{th:ZM}
Let $\{\beta_{(1)},\ldots,\beta_{(l)}\}$ be a complete set of coset 
representatives for $L^*/L$.  
If $\det G_{L}\not\equiv 0 \pmod{p}$, then the modules $V_{\beta_{(i)}+L,\BF}$,
$1\le i\le l$, are pairwise inequivalent irreducible $V_{L,\BF}$-modules.
\end{theorem}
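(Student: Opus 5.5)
The approach is to adapt the argument used for Theorem~\ref{th:Mu-Theor} (the simplicity of $V_{L,\BF}$) to the modules $V_{\beta+L,\BF}$. The main tool is that the vertex operators $Y(s_{\alpha,n},x)$ act on $V_{\beta+L,\BF}$ through the operators $\binom{\alpha}{j}$ appearing in \eqref{eq:Y-s-alpha-n}. On $u\otimes\iota(e_\lambda)$ these operators act by the scalars $\binom{\langle\alpha,\lambda\rangle}{j}$ reduced mod $p$. By Lucas' theorem, the family $\binom{m}{p^k}$, $k\in\BN$, determines the integer $m$ (its $p$-adic digits, and for negative $m$ the sign by stabilization). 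Hence the joint eigenvalues of these operators for $\alpha\in\{\gamma^1,\ldots,\gamma^d\}$ determine the vector $(\langle\gamma^i,\lambda\rangle)_i\in\BZ^d$, and therefore $\lambda$ itself. So $V_{\beta+L,\BF}$ decomposes into the simultaneous generalized eigenspaces $M(1)_\BF\otimes\iota(e_\lambda)$, $\lambda\in\beta+L$.

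\textbf{Step 1 (projection onto a single lattice component).} First I check, exactly as in the proof of Proposition~\ref{prop:Mu-Heisen-gen}, that every $V_{L,\BF}$-submodule $W$ is stable under the operators $\binom{\alpha}{j}$, the $s_{\alpha,m}$ and the $r_{\alpha,m}$. The argument is the same induction over $p^t\le n<p^{t+1}$, read off from the expansion \eqref{eq:Y-s-alpha-n}, using $\binom{m}{p^t}\not\equiv0\pmod p$. By the eigenvalue separation above, $W$ is then the direct sum of its intersections with the components $M(1)_\BF\otimes\iota(e_\lambda)$.

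\textbf{Step 2 (reaching a vacuum-like vector).} Take a nonzero $w\in W\cap (M(1)_\BF\otimes\iota(e_\lambda))$. I apply annihilation operators $r_{\gamma^i,m}$ to $w$, using the commutation relation \eqref{eq:rs-sr} and the fact that $M(1)_\BF\otimes\iota(e_\lambda)$ is, up to reduction, a restricted-dual type module for the "divided power" Heisenberg algebra. This should produce a nonzero multiple of $\iota(e_\lambda)$ inside $W$. This is the step where $\det G_L\not\equiv0\pmod p$ enters. The pairing between the $s_{\gamma^i,m}$ and the $r_{\gamma^j,m}$ is governed by $G_L$, so the nondegeneracy of $G_L$ mod $p$ guarantees that for every nonzero $u\in M(1)_\BF$ some product of $r$'s sends $u$ to a nonzero scalar. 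I expect this to be the main obstacle: it requires a careful leading-term argument with respect to a suitable order on the basis $\mathcal S$, and I would copy the corresponding argument from \cite{Mu} (or \cite[Proposition 3.6]{ZM}) essentially verbatim.

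\textbf{Step 3 (generation and inequivalence).} The modes of $Y(\iota(e_{\alpha}),x)$, $\alpha\in L$, carry $\iota(e_\lambda)$ to $\pm\iota(e_{\lambda+\alpha})$ through the lowest term of \eqref{eq:ver-ope}. Together with the $s_{\alpha,m}\in M(1)_\BF$, these generate all of $V_{\beta+L,\BF}$ from $\iota(e_\lambda)$. Hence $W=V_{\beta+L,\BF}$, which proves irreducibility. For inequivalence, suppose $\phi:V_{\beta_{(i)}+L,\BF}\to V_{\beta_{(j)}+L,\BF}$ is an isomorphism. Since $\phi$ commutes with all modes of the $s_{\alpha,n}$, it commutes with the operators $\binom{\alpha}{j}$ and so preserves their joint eigenvalues. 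By Step 1, the set of $\lambda$ occurring in a module is therefore an invariant, namely $\beta_{(i)}+L=\beta_{(j)}+L$. Thus $i=j$.
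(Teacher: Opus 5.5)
Your proposal is correct and matches the argument behind the cited result; the paper itself only quotes \cite[Theorem 3.8]{ZM}. The argument is: $L$-gradedness of submodules via the operators $\binom{\alpha}{p^k}$ extracted inductively from $Y(s_{\alpha,n},x)$ (in \eqref{eq:Y-s-alpha-n} the three terms not covered by induction are the creation part, $\binom{\alpha}{n}x^{-n}$, and the annihilation part, and they occupy disjoint ranges of powers of $x$); then nondegeneracy of the form when $\det G_L\not\equiv 0\pmod p$ (\cite[Theorem 5]{Mu}, used as in \cite[Proposition 3.6]{ZM}) to reach a nonzero multiple of $\iota(e_\lambda)$; then generation by $\iota(e_\lambda)$ as in \cite[Theorem 3.1]{ZM}, with inequivalence from the lattice-component invariants. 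The only detail to make explicit is in Step 2: pair against the top $\BN$-degree component of $w$, so that the chosen product of $r$'s kills the lower-degree components and yields exactly $c\,\iota(e_\lambda)$ with $c\neq 0$.
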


\section{Lattice vertex algebras $V_{L,\BF}$ of type ADE with $\det G_{L}\not\equiv 0\pmod{p}$}\label{sec:3}

In this section, let $L$ be a root lattice of type ADE with 
$\det G_{L}\not\equiv 0\pmod{p}$.
We establish an isomorphism between the lattice vertex algebra 
$V_{L,\BF}$ and the simple affine vertex algebra of the same 
type at level one over $\BF$.
Using this isomorphism, we then classify the irreducible 
$\BN$-graded modules of $V_{L,\BF}$, viewed as an $\BN$-graded vertex algebra.

\subsection{ADE-type lattice and affine vertex algebras: an isomorphism}

Let $\fg$ be a Lie algebra of type ADE over $\BF$.
Fix a Cartan subalgebra $\fh \subset \fg$.
Let $\Delta$ be the corresponding root system, 
let $\Pi=\{\alpha_1,\ldots,\alpha_n\}$ be the set of simple roots, 
and let $L$ be the root lattice.
For the remainder of this paper, we assume that $p>2$, and that
$p>3$ if $\fg$ is of type $E_6$.

Under these assumptions, the Lie algebra $\fg$ of type 
$D_n$ $(n\ge 4)$, $E_6$, $E_7$, or $E_8$ over $\BF$ is simple (see, for example, \cite[\S 0.13]{H1995}).
For type $A_n$ $(n\ge 1)$, if $p \nmid n+1$, 
then $\fg=\mathfrak{sl}(n+1, \BF)$ remains simple (see \cite[\S 0.13]{H1995}). 
Otherwise, the Lie algebra $\mathfrak{sl}(n+1, \BF)$ 
has the one-dimensional center $\BF I_{n+1}$, 
which is a restricted ideal;
here $I_{n+1}$ denotes the identity matrix of size $n+1$,
and the quotient
\begin{equation}\label{eq:psl}
\mathfrak{psl}(n+1,\BF) := \mathfrak{sl}(n+1, \BF) / \BF I_{n+1}
\end{equation}
is simple (see \cite[\S 4.1]{SF}).

Recall from \cite[\S 5.6]{Kac2} or \cite{FLM} that 
there exist $E_{\alpha}\in \fg$ ($\alpha\in \Delta$) such that 
\begin{equation}
\fg=\fh\oplus(\bigoplus_{\alpha\in \Delta}\BF E_{\alpha}),
\end{equation}
and the following relations hold:
\begin{equation}\label{eq:3.2}
[\fh,\fh]=0,\;\;[h,E_{\alpha}]=\langle h,\alpha\rangle E_{\alpha},\;\;
[E_{\alpha},E_{\beta}]=\begin{cases}
\epsilon(\alpha,-\alpha)\alpha&\text{if }\alpha+\beta=0\\
\epsilon(\alpha,\beta)E_{\alpha+\beta}&\text{if }\alpha+\beta\in \Delta\\
0&\text{if }\alpha+\beta\notin \Delta\cup\{0\}
\end{cases}
\end{equation}
for all $h\in\fh$ and $\alpha,\beta\in \Delta$.
Here, $\epsilon(\cdot, \cdot)=(-1)^{\epsilon_{0}(\cdot, \cdot)}$, 
with $\epsilon_0(\cdot, \cdot)$ being the $2$-cocycle of the 
central extension of the root lattice $L$ (see, e.g., \cite[\S 7.1]{FLM}).
The invariant bilinear form $\langle\cdot,\cdot\rangle$ on $\fg$ is given by
\begin{equation}\label{eq:3.3}
\langle \fh,E_{\alpha}\rangle=0,\;\;\langle E_{\alpha}, E_{\beta}\rangle=
\begin{cases}
\epsilon(\alpha,-\alpha) &\text{if }\alpha+\beta=0\\
0&\text{if }\alpha+\beta\ne 0
\end{cases}
\end{equation}
for $\alpha,\beta\in \Delta$.
For $\alpha_i\in \Pi$, set $h_{\alpha_i}=\frac{2\alpha_i}{\langle\alpha_i,\alpha_i\rangle}=\alpha_i\in \fh(\cong \fh^*)$.
Then the set
\begin{equation*}
\{h_{\alpha_i}, E_{\alpha}\mid \alpha_{i}\in \Pi, \alpha\in \Delta\}
\end{equation*}
is a (Chevalley) basis of $\fg$.

Consider the affine Lie algebra
\begin{equation*}
\hat{\fg}=\fg\otimes \BF[t,t^{-1}]\oplus\BF\bk,
\end{equation*}
where $\bk$ is central, 
and the Lie bracket is given by
\begin{equation}\label{eq:Lie-brac}
[a\otimes t^{r},b\otimes t^{s}]=[a,b]\otimes t^{r+s}+r\langle a,b\rangle\delta_{r+s,0}\bk
\end{equation}
for $a,b\in \fg$ and $r,s\in \BZ$.

For any $\hat{\fg}$-module $W$ and $a\in \fg$, 
define
\begin{equation}
a(x)=\sum_{m\in \BZ}a(m)x^{-m-1}\in (\End W)[[x,x^{-1}]],
\end{equation}
where $a(m)$ denotes the action of $a\otimes t^m$.
If $\bk$ acts as a scalar $\ell\in \BF$ on a $\hat{\fg}$-module $W$, 
we say that $W$ is of \emph{level $\ell$}.

Let $\ell\in \BF$. 
Denote by $\BF_{\ell}$ the $1$-dimensional ($\fg[t]\oplus\BF\bk$)-module $\BF$, 
where $\fg[t]$ acts trivially and $\bk$ acts as the scalar $\ell$. 
Then one can form an induced module
\begin{equation}
V_{\hat{\fg}}(\ell,0):=U(\hat{\fg})\otimes_{U(\fg[t]\oplus\BF\bk)}\BF_{\ell},
\end{equation}
which is a $\hat{\fg}$-module of level $\ell$.
It follows from \cite{JLM} (cf. \cite{FZ}) that 
$(V_{\hat{\fg}}(\ell,0),Y,\mathbf{1})$ is a vertex algebra over $\BF$, 
which is uniquely determined by the conditions that $\mathbf{1}:=1\otimes 1$  and 
\begin{equation*}
Y(a,x)=a(x)\in (\End V_{\hat{\fg}}(\ell,0))[[x,x^{-1}]]\text{ for } a\in \fg. 
\end{equation*}
Recall from \cite[Proposition 5.2]{JLM} that $V_{\hat{\fg}}(\ell,0)$ 
is an $\BN$-graded vertex algebra.
Moreover, $V_{\hat{\fg}}(\ell,0)$ possesses a unique maximal 
graded ideal (see \cite[Lemma 2.7]{JLM}), denoted by $J$. 
Consequently, the quotient
\begin{equation}
L_{\hat{\fg}}(\ell,0):=V_{\hat{\fg}}(\ell,0)/J
\end{equation}
is a simple $\BN$-graded vertex algebra with $L_{\hat{\fg}}(\ell,0)_{(0)}=\BF\mathbf{1}$. 

Now we consider the simple affine vertex algebra $L_{\hat{\fg}}(1,0)$.
For $\alpha_i,\alpha_j\in \Pi$ and $\alpha, \beta\in \Delta$, 
the equation \eqref{eq:Lie-brac} is equivalent to 
\begin{align}
[h_{\alpha_{i}}(x_1),h_{\alpha_j}(x_2)]&=-\langle h_{\alpha_i},h_{\alpha_j}\rangle \partial^{(1)}_{x_1}x_2^{-1}\delta\left(\frac{x_1}{x_2} \right),\label{eq:h-h}\\
[h_{\alpha_{i}}(x_1),E_{\alpha}(x_2)]&=\langle h_{\alpha_i},\alpha\rangle E_{\alpha}(x_2)x_2^{-1}\delta\left( \frac{x_1}{x_2}\right), \label{eq:h-E}\\
[E_{\alpha}(x_1),E_{\beta}(x_2)]&=\sum_{r\in \BZ}\sum_{s \in \BZ}[E_{\alpha}(r),E_{\beta}(s)]x_1^{-r-1}x_2^{-s-1}\nonumber\\
&=\sum_{r\in \BZ}\sum_{s \in \BZ}\big( [E_{\alpha},E_{\beta}](r+s)+r\langle E_{\alpha},E_{\beta} \rangle\delta_{r+s,0} \big) x_1^{-r-1}x_2^{-s-1}.\nonumber
\end{align}
Let $\alpha,\beta\in \Delta$.
Using \eqref{eq:3.2}--\eqref{eq:3.3}, we treat the cases separately.
\begin{enumerate}[(i)]
\item If $\alpha+\beta\notin \Delta\cup \{0\}$, then 
\begin{equation}\label{eq:1}
[E_{\alpha}(x_1),E_{\beta}(x_2)]=0.
\end{equation}

\item If $\alpha+\beta=0$, then
\begin{align}\label{eq:2}
[E_{\alpha}(x_1),E_{\beta}(x_2)]&=\sum_{r\in \BZ}\sum_{s\in \BZ}\epsilon(\alpha,-\alpha)\alpha(r+s)x_1^{-r-1}x_2^{-s-1}
+\sum_{r\in \BZ}r\epsilon(\alpha,-\alpha)x_1^{-r-1}x_2^{r-1}\nonumber\\
&=\epsilon(\alpha,-\alpha)\biggl(\alpha(x_2) x_2^{-1}\delta\biggl(\frac{x_1}{x_2}\biggr)-\partial_{x_1}^{(1)}x_2^{-1}\delta\biggl(\frac{x_1}{x_2}\biggr)\biggr).
\end{align}

\item If $\alpha+\beta\in \Delta$, then 
\begin{align}\label{eq:3}
[E_{\alpha}(x_1),E_{\beta}(x_2)]&=\sum_{r\in \BZ}\sum_{s\in \BZ}\epsilon(\alpha,\beta)E_{\alpha+\beta}(r+s)x_1^{-r-1}x_2^{-s-1}\nonumber\\
&=\epsilon(\alpha,\beta) E_{\alpha+\beta}(x_2)x_2^{-1}\delta\bigg(\frac{x_1}{x_2}\bigg).
\end{align}
\end{enumerate}

The following is an analogue over fields of prime characteristic of \cite[Theorem 5.6(c)]{Kac2}.

\begin{theorem}\label{th:iso-1}
Let $\fg$ be a Lie algebra of type $A_n$ $(n\ge 1\text{ and }p\nmid n+1)$, 
$D_n$ $(n\ge 4)$, $E_{6}$ $(p>3)$, $E_7$, or $E_8$ over $\BF$, 
and let $L$ be the root lattice of $\fg$.
Then the vertex algebra $V_{L,\BF}$ is isomorphic to $L_{\hat{\fg}}(1,0)$.
\end{theorem}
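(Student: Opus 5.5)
We will build a vertex algebra homomorphism $\Phi: V_{\hat{\fg}}(1,0)\to V_{L,\BF}$ and show that it is surjective and that its kernel is the maximal graded ideal $J$.

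\textbf{Step 1: a level-one $\hat{\fg}$-module structure on $V_{L,\BF}$.} Identify $\fh$ with $L\otimes_\BZ\BF$. Send $h_{\alpha_i}\mapsto \alpha_i(-1)\mathbf{1}=s_{\alpha_i,1}$ and $E_\alpha\mapsto \iota(e_\alpha)$ for $\alpha\in\Delta$; here we choose the cocycle $\epsilon$ so that it is compatible with the central extension used in $V_{L,\BZ}$. All these vectors lie in $V_{L,\BZ}$. We will check that the fields $\alpha_i(x)$ and $Y(\iota(e_\alpha),x)$ on $V_{L,\BF}$ satisfy the commutator relations \eqref{eq:h-h}, \eqref{eq:h-E}, and \eqref{eq:1}--\eqref{eq:3} with $\bk=1$.

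These identities hold on $V_L$ over $\BC$ by the classical Frenkel--Kac computation \cite[Theorem 5.6]{Kac2}, \cite{FLM}. Since $\langle\alpha,\beta\rangle\in\{-2,-1,0,1\}$ for $\alpha\ne\beta$ in $\Delta$, each bracket is an identity among operators that preserve $V_{L,\BZ}$, with integer coefficients. The identities therefore descend to $V_{L,\BF}=V_{L,\BZ}\otimes\BF$.

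Concretely, the relevant products are:
\begin{itemize}
\item $\alpha(j)\iota(e_\beta)=\delta_{j,0}\langle\alpha,\beta\rangle\iota(e_\beta)$ for $j\ge0$;
\item $\iota(e_\alpha)_{j}\iota(e_{-\alpha})$ for $j=0,1$, which gives $\epsilon(\alpha,-\alpha)\,\alpha(-1)\mathbf 1$ and $\epsilon(\alpha,-\alpha)\mathbf 1$;
\item $\iota(e_\alpha)_0\iota(e_\beta)=\epsilon(\alpha,\beta)\iota(e_{\alpha+\beta})$ when $\alpha+\beta\in\Delta$;
\item the higher products, which vanish.
\end{itemize}
By the Borcherds commutator formula over $\BF$, these products give exactly the brackets above.

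By the universal property of $V_{\hat{\fg}}(1,0)$ (it is generated by $\fg$ with only the affine relations, cf.\ \cite{JLM}), we obtain a vertex algebra homomorphism $\Phi:V_{\hat{\fg}}(1,0)\to V_{L,\BF}$ with $\Phi(a)$ given by the assignment above. The map $\Phi$ is $\BN$-graded because it preserves weight on generators.

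\textbf{Step 2: surjectivity.} The image of $\Phi$ contains $\iota(e_{\pm\alpha_i})$ for all $i$. Taking $\{\gamma^i\}=\Pi$, Theorem \ref{th:Mu-Th-1} shows that these elements generate $V_{L,\BF}$, so $\Phi$ is onto.

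\textbf{Step 3: the kernel is $J$.} By Theorem \ref{th:Mu-Theor}, $V_{L,\BF}$ is simple, since $\det G_L\not\equiv0\pmod p$ under our hypotheses. Hence $\ker\Phi$ is a graded ideal, and it is maximal among proper ideals because $V_{L,\BF}\cong V_{\hat{\fg}}(1,0)/\ker\Phi$ is simple. It is also proper, since $\Phi(\mathbf 1)=\mathbf 1\neq0$. By uniqueness of the maximal graded ideal \cite[Lemma 2.7]{JLM}, $\ker\Phi=J$, and therefore $V_{L,\BF}\cong L_{\hat{\fg}}(1,0)$.

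\textbf{Main obstacle.} The delicate point is Step 1, specifically justifying the reduction mod $p$. We must make sure the vertex operators $Y(\iota(e_\alpha),x)$, which are defined over $\BC$ through exponentials with denominators, act $\BZ$-integrally, and that the $n$-th products computed over $\BC$ have integer coefficients. This follows because $V_{L,\BZ}$ is an integral form (Lemma \ref{DG3.3}), so the $n$-th products are $\BZ$-bilinear on $V_{L,\BZ}$. We must also match the sign conventions of $\epsilon$ with those in $V_{L,\BZ}$. A secondary check is that $\fg$ over $\BF$, defined via the Chevalley basis with the relations \eqref{eq:3.2}, is exactly the Lie algebra whose affinization defines $V_{\hat{\fg}}(1,0)$.
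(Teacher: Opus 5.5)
Your proposal is correct and follows essentially the same route as the paper. It makes $V_{L,\BF}$ a level-one $\hat{\fg}$-module via $E_\alpha(x)=Y(\iota(e_\alpha),x)$ and $h_{\alpha_i}(x)=Y(s_{\alpha_i,1},x)$, gets a graded vertex algebra map from $V_{\hat{\fg}}(1,0)$ by the universal property, obtains surjectivity from Mu's generation theorem, and identifies the kernel with $J$ using the simplicity of $V_{L,\BF}$ when $\det G_L\not\equiv 0 \pmod p$. The only difference is cosmetic: the paper checks the brackets directly over $\BF$ from $\iota(e_\alpha)_m\iota(e_\beta)=\epsilon(\alpha,\beta)s_{\alpha,-m-1-\langle\alpha,\beta\rangle}\iota(e_{\alpha+\beta})$ and the commutator formula, which is exactly your list of products, so no separate reduction-mod-$p$ argument is needed.
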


\begin{proof}
Let $\alpha_i,\alpha_j\in \Pi$.
Then (see, for example, \cite[(6.5.3)]{LL})
\begin{align}\label{eq:s-alpha-i-j}
[Y(s_{\alpha_{i},1},x_1),Y(s_{\alpha_j,1},x_2)]&=[Y(\alpha_i(-1)\mathbf{1},x_1), Y(\alpha_{j}(-1)\mathbf{1},x_2)]\nonumber\\
&=[\alpha_{i}(x_1),\alpha_{j}(x_2)]\nonumber\\
&=-\langle \alpha_i,\alpha_j\rangle \partial^{(1)}_{x_1}x_2^{-1}\delta\biggl(\frac{x_1}{x_2} \biggr).
\end{align}
For $\alpha\in \Delta$, we have (see \cite[(6.5.9)]{LL})
\begin{align}\label{eq:s-alpha-i-iota}
[Y(s_{\alpha_i,1},x_1),Y(\iota(e_{\alpha}),x_2)]&=[Y(\alpha_i(-1)\mathbf{1},x_1),Y(\iota(e_{\alpha}),x_2)]\nonumber\\
&=\langle\alpha_i,\alpha\rangle Y(\iota(e_{\alpha}),x_2)x_2^{-1}\delta\biggl(\frac{x_1}{x_2}\biggr).
\end{align}
Let $\alpha, \beta\in \Delta$. 
Using the commutator formula (see \cite[(3.1.8)]{LL}), 
we obtain
\begin{align}\label{eq:comm}
&\?[Y(\iota(e_{\alpha}),x_1), Y(\iota(e_{\beta}),x_2)]\nonumber\\
&=\operatorname{Res}_{x_0}x_2^{-1}\delta\biggl(\frac{x_1-x_0}{x_2}\biggr)Y(Y(\iota(e_{\alpha}),x_0)\iota(e_{\beta}),x_2)\nonumber\\
&=\operatorname{Res}_{x_0}\sum_{k\in\BZ}\sum_{m\in \BZ}(x_1-x_0)^{m}x_2^{-m-1}Y(\iota(e_{\alpha})_k\iota(e_{\beta}),x_2)x_0^{-k-1}\nonumber\\
&=\sum_{i\in \BN}(-1)^{i}Y(\iota(e_{\alpha})_{i}\iota(e_{\beta}),x_2)\partial_{x_1}^{(i)}x_2^{-1}\delta\biggl(\frac{x_1}{x_2}\biggr).
\end{align}
By the definition of $Y(\iota(e_{\alpha}),x)$ in\eqref{eq:ver-ope}
and by \cite[(6.4.37), (6.4.51)]{LL}, we have
\begin{equation*}
Y(Y(\iota(e_{\alpha}),x_0)\iota(e_{\beta}),x_2)
=\epsilon(\alpha,\beta)x_0^{\langle\alpha,\beta\rangle}Y(E^{-}(-\alpha,x_0)\iota(e_{\alpha+\beta}),x_2).
\end{equation*}
It follows that for all $m \in \BZ$,
\begin{equation}\label{eq:3.9}
Y(\iota(e_{\alpha})_{m}\iota(e_{\beta}),x_2)=\epsilon(\alpha,\beta)Y(s_{\alpha,-m-1-\langle\alpha,\beta\rangle}\iota(e_{\alpha+\beta}),x_2).
\end{equation}
We now proceed case by case.
\begin{enumerate}[(i)]
\item If $\alpha+\beta\not\in\Delta\cup \{0\}$, 
then $\langle\alpha,\beta\rangle=0$, $1$, or $2$. 
Since $s_{\alpha,m}=0$ for $m<0$,
it follows from \eqref{eq:3.9} that $Y(\iota(e_{\alpha})_{k}\iota(e_{\beta}),x_2)=0$ for $k\in\BN$.
Then \eqref{eq:comm} yields
\begin{equation}\label{eq:1'}
[Y(\iota(e_{\alpha}),x_1), Y(\iota(e_{\beta}),x_2)]=0.
\end{equation}

\item If $\alpha+\beta=0$, then $\langle\alpha,\beta\rangle=-\langle\alpha,\alpha\rangle=-2$. 
It follows from \eqref{eq:comm} and \eqref{eq:3.9} that
\begin{equation}\label{eq:2'}
[Y(\iota(e_{\alpha}),x_1), Y(\iota(e_{\beta}),x_2)]=\epsilon(\alpha,-\alpha)\biggl( Y(\alpha(-1)\mathbf{1},x_2)x_2^{-1}\delta\biggl(\frac{x_1}{x_2}\biggr)-  \partial_{x_1}^{(1)}x_2^{-1}\delta\biggl(\frac{x_1}{x_2}\biggr)\biggr).
\end{equation}

\item If $\alpha+\beta\in \Delta$, then $\langle\alpha,\beta\rangle=-1$. 
It follows from \eqref{eq:comm} and \eqref{eq:3.9} that
\begin{equation}\label{eq:3'}
[Y(\iota(e_{\alpha}),x_1),  Y(\iota(e_{\beta}),x_2)]=\epsilon(\alpha,\beta)Y(\iota(e_{\alpha+\beta}),x_2)x_2^{-1}\delta\biggl(\frac{x_1}{x_2}\biggr).
\end{equation}
\end{enumerate}

Since $h_{\alpha_i}=\alpha_i$ for all $\alpha_i\in \Pi$, 
a comparison of \eqref{eq:h-h}--\eqref{eq:3} with \eqref{eq:s-alpha-i-j}--\eqref{eq:s-alpha-i-iota} and \eqref{eq:1'}--\eqref{eq:3'} shows that $V_{L,\BF}$ is a $\hat{\fg}$-module with the actions
\begin{equation}\label{eq:3.14}
E_{\alpha}(x)=Y(\iota(e_{\alpha}),x),\;\;h_{\alpha_i}(x)=Y(s_{\alpha_i,1},x)
\end{equation}
for all $\alpha\in \Delta$ and $\alpha_i\in \Pi$.
Then $V_{L,\BF}$ is a $\hat{\fg}$-module of level $1$ generated by $\mathbf{1}$.
Moreover, by \eqref{eq:3.14},  we have
$E_{\alpha}(m)\mathbf{1}=0$ and $h_{\alpha_i}(m)\mathbf{1}=0$ for 
$\alpha\in \Delta$, $\alpha_i\in \Pi$, and $m\in \BN$.
It follows from the universal property of 
$V_{\hat{\fg}}(1,0)$ (see \cite[Remark 6.2.8]{LL}) that
there exists a $\hat{\fg}$-module homomorphism 
$\psi:V_{\hat{\fg}}(1,0)\to  V_{L,\BF}$ such that
\begin{equation}
\psi(\mathbf{1})=\mathbf{1}.
\end{equation}
Then we obtain
\begin{equation}\label{eq:3.23}
\psi \big(E_{\gamma_1}(m_1)\cdots E_{\gamma_k}(m_k)\mathbf{1}\big)=\iota(e_{\gamma_1})_{m_1}\cdots\iota(e_{\gamma_k})_{m_k} \mathbf{1}
\end{equation}
for $m_1,\ldots,m_k\in\BZ$, $\gamma_1,\ldots,\gamma_k\in \pm \Pi(:=\Pi \cup (-\Pi))$, and $k\in \BN$. 
Consequently, for $\alpha\in \pm \Pi$, we have
\begin{align*}
\psi (Y(E_{\alpha}(-1)\mathbf{1},x)E_{\gamma_1}(m_1)\cdots E_{\gamma_k}(m_k)\mathbf{1})&=
\sum_{m\in\BZ}\psi\big(E_{\alpha}(m)E_{\gamma_1}(m_1)\cdots E_{\gamma_k}(m_k)\mathbf{1} \big)x^{-m-1} \\
&=\sum_{m\in\BZ}\big(\iota(e_{\alpha})_{m}\iota(e_{\gamma_1})_{m_1}\cdots\iota(e_{\gamma_k})_{m_k} \mathbf{1}\big) x^{-m-1}\\
&=Y(\iota(e_{\alpha}),x) \psi(E_{\gamma_1}(m_1)\cdots E_{\gamma_k}(m_k)\mathbf{1}).
\end{align*}
Since the vertex algebra $V_{\hat{\fg}}(1,0)$ is generated 
by  $\{E_{\alpha}(-1)\mathbf{1}\mid \alpha\in \pm \Pi\}$,
it follows from \cite[Proposition 5.7.9]{LL} that 
$\psi$ is a vertex algebra homomorphism.

Equation \eqref{eq:3.23} shows that $\psi$ is surjective, 
so $V_{\hat{\fg}}(1,0)/\ker \psi$ is isomorphic to $V_{L,\BF}$ as vertex algebras.
Recall that  $L_{\hat{\fg}}(1,0)=V_{\hat{\fg}}(1,0)/J$ is 
the simple quotient, where $J$ is the maximal graded ideal.
Since $\psi$ is a grading-preserving vertex algebra homomorphism, 
we deduce that $\ker\psi$ is a graded ideal, and hence $\ker\psi\subset J$.
Furthermore, by our assumptions on $p$ (namely, $p>2$, 
with $p\nmid n+1$ if $L$ is of type $A_n$, and $p>3$ if $L$ is of type $E_6$), 
we have $\det G_{L}\not\equiv 0\pmod{p}$.
Theorem \ref{th:Mu-Theor} then shows that $V_{L,\BF}$ is simple.
Consequently, $V_{L,\BF}$ is isomorphic to $L_{\hat{\fg}}(1,0)$ as 
vertex algebras and $J=\ker \psi$.
\end{proof}

\subsection{The classification of irreducible $\BN$-graded modules}\label{sec:3.2}

We begin by recalling the Zhu algebra $A(V)$ associated to an 
$\BN$-graded vertex algebra $V$ (see \cite{DR}, \cite{FZ}, \cite{Zhu1996}).
Following \cite{Zhu1996}, for $u,v\in V$ with $u$ $\BZ$-homogeneous, 
define
\begin{equation}\label{eq:u*v}
u*v=\Res_{x} \frac{(1+x)^{\deg u}}{x} Y(u,x)v.
\end{equation}
Then $(V,*)$ carries a nonassociative algebra structure.
On the other hand, following \cite{DR}, 
for $u,v\in V$ with $u$ $\BZ$-homogeneous and for $k\in \BN$, set
\begin{equation*}
u\circ_{k}v=\Res_{x} \frac{(1+x)^{\deg u}}{x^{k+2}} Y(u,x)v,
\end{equation*}
and set $O(V):=\spanf\{u\circ_{k} v\mid u,v\in V, k\in \BN\}$.

\begin{proposition}[\cite{DR}]
The subspace $O(V)$ is an ideal of the nonassociative algebra $(V,*)$, 
and the quotient $A(V):=V/O(V)$ is an associative algebra, 
where $\mathbf{1}+O(V)$ is the identity.
\end{proposition}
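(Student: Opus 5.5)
Following the characteristic-zero argument of Frenkel--Zhu and Zhu, and its extension to arbitrary $\BN$-graded vertex algebras in \cite{DR}, I will prove the two assertions separately. First I show that $O(V)$ is a two-sided ideal of $(V,*)$. Then I show that $*$ is associative modulo $O(V)$ and that $\mathbf{1}+O(V)$ is a two-sided identity. Every identity will be derived from the Jacobi identity by taking residues of formal series. No division by integers should be needed, since the expansions $(1+x)^{\deg u}$ involve only integer binomial coefficients. This is what lets the argument work over $\BF$.

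I will begin with the basic residue identities. The first step is to show $u\circ_k v\in O(V)$ for all $k\in\BN$ and, more generally,
\begin{equation*}
\Res_x\frac{(1+x)^{\deg u+m}}{x^{k+2+n}}Y(u,x)v\in O(V)\quad\text{for } n\ge m\ge0.
\end{equation*}
I expect to prove this by induction, writing $(1+x)^{m}=\sum\binom{m}{j}x^j$ so that each term has the form $\circ_{k'}$ with $k'\ge0$.

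Next comes the commutator-type identity
\begin{equation*}
u*v-v*u\equiv\Res_x(1+x)^{\deg u-1}Y(u,x)v \pmod{O(V)}.
\end{equation*}
This uses skew symmetry $Y(u,x)v=e^{xD}Y(v,-x)u$. To handle $e^{xD}$ in characteristic $p$, I will use divided powers $D^{(i)}$, which act on $V$ via $u_{-i-1}\mathbf 1$; the coefficients that arise are binomial. I also need $(D+L(0))$-type identities, and I will replace these with the grading relation $\deg(D^{(i)}u)=\deg u+i$.

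With these in hand, I will show that $O(V)$ is an ideal. For right ideal closure, $(u\circ_k v)*w\in O(V)$, and for left closure, $w*(u\circ_k v)\in O(V)$. For both I will apply the Jacobi identity in the form
\begin{equation*}
\Res_{x}\Res_{x_0}\ \cdots\ Y(Y(u,x_0)v,x)w
\end{equation*}
expanded via $Y(Y(u,x_0)v,x)=\Res_{x_1}\big(x_0^{-1}\delta(\tfrac{x_1-x}{x_0})Y(u,x_1)Y(v,x)-\cdots\big)$, using the homogeneity $\deg(u_jv)=\deg u+\deg v-j-1$.

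Associativity modulo $O(V)$ will follow from the identity
\begin{equation*}
(u*v)*w-u*(v*w)=\Res_{x_1}\Res_{x_2}\frac{(1+x_1)^{\deg u}(1+x_2)^{\deg v}}{x_1x_2}\bigl(Y(Y(u,x_1-x_2)\cdots)\bigr)
\end{equation*}
as in Zhu's Theorem 2.1.1, showing that the difference lies in $O(V)$.

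For the identity element, $\mathbf 1*u=u$ is immediate from $Y(\mathbf 1,x)=1$. For the other side, $u*\mathbf 1=\Res_x\frac{(1+x)^{\deg u}}{x}e^{xD}u=\sum_i\binom{\deg u}{i}D^{(i)}u$, so I need $D^{(i)}u\in O(V)$ modulo the correct combination. I will obtain this from $u\circ_k\mathbf 1$ and the commutator identity with $v=\mathbf 1$.

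The main obstacle is the bookkeeping in the associativity and ideal steps. These are the places where Zhu uses $L(-1)$ and $L(0)$, which may not exist in an $\BN$-graded vertex algebra over $\BF$. My first approach there will be to use only the $\BZ$-grading and the divided-power derivations $D^{(i)}$, checking that every coefficient appearing is an integer binomial coefficient.
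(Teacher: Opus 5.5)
Your route is the one behind the cited result. The paper gives no proof beyond citing \cite{DR}, and your outline runs Zhu's argument over $\BF$ with the divided powers $D^{(i)}$ in place of $L(-1)$. The plans for the two ideal properties and for associativity go through, because they use only integer binomial coefficients and the substitution rule for formal residues, which is an identity over $\BZ$ and hence valid over $\BF$.

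The right-identity step, however, is wrong as written. Since $Y(u,x)\mathbf{1}=e^{xD}u\in V[[x]]$, the residue $\Res_x x^{-1}(1+x)^{\deg u}e^{xD}u$ is just the constant term of $(1+x)^{\deg u}e^{xD}u$. So $u*\mathbf{1}=u$ exactly, and there is nothing to prove. The expression $\sum_i\binom{\deg u}{i}D^{(i)}u$ you wrote is instead the coefficient of $x^{\deg u}$. For $\deg u\ge 1$ it equals $u\circ_{\deg u-1}\mathbf{1}$, so it lies in $O(V)$. Hence the congruence $\sum_i\binom{\deg u}{i}D^{(i)}u\equiv u$, which you intend to extract from the elements $u\circ_k\mathbf{1}$, is false for every homogeneous $u\notin O(V)$ of positive degree. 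Moreover, the commutator identity at $v=\mathbf{1}$ correctly gives $u*\mathbf{1}\equiv u$, so combined with your formula it would force every such $u$ into $O(V)$.

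A smaller point concerns the commutator identity. The relation $\deg(D^{(i)}u)=\deg u+i$ is only bookkeeping and does not replace $(L(-1)+L(0))w\in O(V)$. The actual substitute is the family $w\circ_k\mathbf{1}\in O(V)$, $k\in\BN$. This says $(1+x)^{\deg w}e^{xD}w\equiv w$ modulo $xO(V)[[x]]$, i.e.\ $D^{(i)}w\equiv\binom{-\deg w}{i}w$ modulo $O(V)$. With this and the substitution $x\mapsto -x/(1+x)$, skew symmetry gives $v*u\equiv\Res_x x^{-1}(1+x)^{\deg u-1}Y(u,x)v$.

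That said, with $O(V)$ spanned by all $u\circ_k v$ you do not need the commutator identity at all. Your generalized residues also lie in $O(V)$ by a one-line binomial expansion rather than an induction. The three remaining properties follow from the Jacobi identity directly:
\begin{enumerate}
\item[] \emph{Associativity.} Multiply the Jacobi identity by $(1+x_1)^{\deg u}x_0^{-1}$ and take $\Res_{x_0}\Res_{x_1}$. Then multiply by $(1+x_2)^{\deg v}x_2^{-1}$ and take $\Res_{x_2}$. This expresses $(u*v)*w-u*(v*w)$ as a finite sum of elements $u\circ_i(\cdot)$ and $v\circ_i(\cdot)$.
\item[] \emph{Left ideal.} The commutator formula multiplied by $(1+x_1)^{\deg w}x_1^{-1}$ gives $w*(u\circ_k v)\in O(V)$.
\item[] \emph{Right ideal.} Use the iterate formula after the substitution $x_0\mapsto x_0/(1+x)$, which absorbs the degree shifts $\deg(u_jv)=\deg u+\deg v-j-1$. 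This gives $(u\circ_k v)*w\in O(V)$.
\end{enumerate}
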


Recall that $\{E_{\alpha},h_{\alpha_i}\mid \alpha\in \Delta, 
\alpha_i\in \Pi\}$ is the Chevalley basis of $\fg$.
From \eqref{eq:3.2}, we obtain
\begin{equation}\label{eq:ad}
(\ad E_{\alpha})^{3}=0,\;\;(\ad h_{\alpha_i})^{p}=\ad h_{\alpha_i}\quad 
\text{for }\alpha\in \Delta \text{ and } \alpha_i\in\Pi.
\end{equation}
It is well known that classical Lie algebras carry the structure of 
a restricted Lie algebra (see \cite{SF}).
Since $p\ge 3$, by \eqref{eq:ad} and \cite[Theorem 2.3 in \S 2]{SF}, 
we fix the $p$-mapping of $\fg$ as follows:
\begin{equation}\label{eq:4.30}
E_{\alpha}^{[p]}=0,\;\;h_{\alpha_i}^{[p]}=h_{\alpha_i} \quad\text{for}\; \alpha\in \Delta\text{ and }\alpha_i\in \Pi.
\end{equation}
Let $J_{0}$ be the $\hat{\fg}$-submodule of $V_{\hat{\fg}}(\ell,0)$ 
generated by vectors
\begin{equation}
(a(-m)^{p}-a^{[p]}(-pm))\mathbf{1}
\end{equation}
for $a\in \fg$ and $m\in \BZ_+$. 
By \cite[Proposition 5.6]{JLM}, 
$J_{0}$ is an ideal of the vertex algebra $V_{\hat{\fg}}(\ell,0)$ and 
coincides with the ideal generated by $(a(-1)^{p}-a^{[p]}(-p))\mathbf{1}$ for $a\in \fg$.
Then
\begin{equation*}
V_{\hat{\fg}}^{0}(\ell,0):=V_{\hat{\fg}}(\ell,0)/J_{0}
\end{equation*}
is an $\BN$-graded vertex algebra (see \cite[(5.11)]{JLM}).
The following result was proved in \cite[Proposition 6.1, Theorem 6.3]{JLM}.

\begin{theorem}\label{th:JLM}
For $v\in V_{\hat{\fg}}(\ell,0)$, 
set $[v]:=v+O(V_{\hat{\fg}}(\ell,0))\in A(V_{\hat{\fg}}(\ell,0))$.
The linear map 
\begin{align*}
\phi: \fg&\to A(V_{\hat{\fg}}(\ell,0))\\
a&\mapsto  [a]
\end{align*}
extends uniquely to an algebra isomorphism from $U(\fg)$ to 
$A(V_{\hat{\fg}}(\ell,0))$.
Furthermore, let $\mathfrak{u}(\fg)=U(\fg)/\langle a^p-a^{[p]} \mid a\in \fg\rangle$ 
be the restricted enveloping algebra of $\fg$. 
This algebra isomorphism
descends to an isomorphism from $\mathfrak{u}(\fg)$ to $A(V_{\hat{\fg}}^{0}(\ell,0))$.
\end{theorem}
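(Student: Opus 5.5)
The plan is to follow the Frenkel--Zhu strategy, adapted to characteristic $p$. Throughout write $V=V_{\hat{\fg}}(\ell,0)$.

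\textbf{Step 1: $\phi$ extends to a homomorphism $U(\fg)\to A(V)$.} First I will record the standard identity, valid for homogeneous $u$,
\begin{equation*}
u*v-v*u\equiv \Res_x(1+x)^{\deg u-1}Y(u,x)v \pmod{O(V)}.
\end{equation*}
Its proof uses only skew symmetry and the definition of $\circ_k$, so it works over $\BF$. Take $a,b\in\fg$, identified with $a(-1)\mathbf{1}$ and $b(-1)\mathbf{1}$, which have degree $1$. The identity then gives $[a]*[b]-[b]*[a]=[a(0)b]=[[a,b]]$. By the universal property, $\phi$ extends to an algebra homomorphism $\Phi:U(\fg)\to A(V)$.

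\textbf{Step 2: $\Phi$ is surjective.} I will argue by induction on the length of PBW-type monomials $a^1(-n_1)\cdots a^k(-n_k)\mathbf{1}$ with $n_i\ge 1$. Expanding $a\circ_{n}v\in O(V)$ shows that $a(-n-2)v$ is congruent modulo $O(V)$ to an $\BF$-combination of $a(-j)v$ with $1\le j\le n+1$, together with terms $a(i)v$ for $i\ge 0$. Each term $a(i)v$ with $i\ge0$ is a monomial of shorter length, or of the same length but lower degree. The remaining case is $a(-1)v$, which equals $a*v-a(0)v$. Hence every monomial lies in the subalgebra generated by the classes $[a]$, $a\in\fg$.

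\textbf{Step 3: $\Phi$ is injective.} For any $\fg$-module $U$, let $\fg[t]t$ act trivially and $\bk$ act as $\ell$, and induce to obtain the $\hat{\fg}$-module $M(U)=U(\hat{\fg})\otimes_{U(\fg[t]\oplus\BF\bk)}U$. This is a restricted module of level $\ell$, so by the module version of the result of \cite{JLM} used above (cf. \cite{LL}) it is a $V$-module. Its degree-zero part is $U$, and $o(a)=a(0)$ acts there as $a$. By Zhu's theorem (the direction ``$A(V)$ acts on the top level'', which is formal), $A(V)$ acts on $U$ through $[a]\mapsto a$. Taking $U=U(\fg)$, the left regular module, shows that $\ker\Phi$ acts trivially on $U(\fg)$, so $\ker\Phi=0$.

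\textbf{Step 4: the restricted quotient.} I will use the general fact $A(V/I)\cong A(V)/[I]$, where $[I]$ is the image of $I$; this follows since $O(V/I)=(O(V)+I)/I$. By \cite[Proposition 5.6]{JLM}, $J_0$ is the ideal generated by $w_a=(a(-1)^p-a^{[p]}(-p))\mathbf{1}$, and the image of a vertex-algebra ideal generated by the $w_a$ is the two-sided ideal of $A(V)$ generated by the $[w_a]$. It then remains to show $[w_a]=[a]^p-[a^{[p]}]$.
\begin{itemize}
\item For the second term, iterating $a(-n-2)\mathbf{1}\equiv -a(-n-1)\mathbf{1}$, which comes from $a\circ_n\mathbf{1}$, gives $[a^{[p]}(-p)\mathbf{1}]=(-1)^{p-1}[a^{[p]}]=[a^{[p]}]$, because $p$ is odd.
\item For the first term, $[a]^{*p}$ is the class of $(a(-1)+a(0))^p\mathbf{1}$. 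I will expand this power with Jacobson's formula in the restricted Lie algebra generated by $a(-1),a(0)$. The relevant brackets are $[a(0),a(-1)]=[a,a](-1)=0$, with no central term since $\langle a,a\rangle$ appears with coefficient $0$. So $a(0)$ and $a(-1)$ commute, and $(a(-1)+a(0))^p=a(-1)^p+a(0)^p$. Since $a(0)\mathbf{1}=0$, this gives $[a]^p=[a(-1)^p\mathbf{1}]$.
\end{itemize}
Hence $[w_a]=[a]^p-[a^{[p]}]$, and $A(V^0_{\hat{\fg}}(\ell,0))\cong U(\fg)/\langle a^p-a^{[p]}\rangle=\mathfrak{u}(\fg)$.

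\textbf{Main obstacle.} I expect Step 3 to be the hardest part, specifically checking that $M(U)$ is a genuine $V$-module over $\BF$ with the expected top level and $o(a)=a(0)$. If this proves delicate, I will replace it by the \cite{JLM} construction of modules from restricted $\hat{\fg}$-modules of level $\ell$ and verify the action on the degree-zero part directly.
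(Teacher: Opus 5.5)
Steps 1--3 are the standard Frenkel--Zhu argument and they are fine. The paper gives no proof of its own here; it quotes \cite[Proposition 6.1, Theorem 6.3]{JLM}.

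The gap is in Step 4, at the claim that the image in $A(V)$ of the vertex-algebra ideal generated by the $w_a$ is the two-sided ideal generated by the $[w_a]$. Only the inclusion ``$\supseteq$'' is formal, since the image of an ideal of $V$ is a two-sided ideal of $A(V)$. The inclusion ``$\subseteq$'' is false as a general principle, even in this setting.

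For a counterexample, work in $V_{\hat{\fg}}(1,0)$ with $h=h_{\alpha_1}$ and $w=h(-2)\mathbf{1}$. Then $h(2)w=2\langle h,h\rangle\mathbf{1}=4\mathbf{1}\neq 0$, so the ideal generated by $w$ is all of $V$ and its image is all of $A(V)$. On the other hand, $h\circ_0\mathbf{1}=h(-2)\mathbf{1}+h(-1)\mathbf{1}$ gives $[w]=-[h]$. The two-sided ideal of $U(\fg)$ generated by $h$ lies in the augmentation ideal, so it is proper. As written, your argument therefore only yields a surjection $\mathfrak{u}(\fg)\to A(V^0_{\hat{\fg}}(\ell,0))$. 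Injectivity, which is the content of ``descends to an isomorphism'', is not proved.

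What makes ``$\subseteq$'' true here is the special form of the generators, and you have to use it. Put $c_{a,m}:=a(-m)^p-a^{[p]}(-pm)$ and $w_{a,m}:=c_{a,m}\mathbf{1}$.

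\begin{itemize}
\item Each $c_{a,m}$ is central in $U(\hat{\fg})$. Indeed, $\ad(a(-m))^p(b(n))=((\ad a)^pb)(n-pm)$, because the central terms vanish: $\langle a,(\ad a)^{k-1}b\rangle=0$ for $k\ge 2$ by invariance. Also $[a^{[p]}(-pm),b(n)]$ has central term $-pm\langle a^{[p]},b\rangle\delta_{n,pm}\bk$, which is $0$ in $\BF$.
\item Use the paper's definition of $J_0$ as the $\hat{\fg}$-submodule generated by all the $w_{a,m}$, not just the vertex-algebra generators $w_{a,1}$ from \cite[Proposition 5.6]{JLM}. Centrality then gives $J_0=\sum_{a,m}U(t^{-1}\fg[t^{-1}])\,w_{a,m}$, with $b(n)w_{a,m}=0$ for all $n\ge 0$.
\item Now run your Step 2 induction. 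Use $[b(-n)u]=(-1)^{n-1}\bigl([b]*[u]-[b(0)u]\bigr)$, where $b(0)$ preserves length and $b(0)w_{a,m}=0$. This gives $[J_0]\subseteq\sum_{a,m}A(V)*[w_{a,m}]$.
\item Finally, $a(-m)u\equiv(-1)^{m-1}a(-1)u \pmod{O(V)}$ and $p$ odd give $[w_{a,m}]=(-1)^{m-1}[w_{a,1}]=(-1)^{m-1}\bigl([a]^p-[a^{[p]}]\bigr)$.
\end{itemize}

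With this inserted, your bullet computations in Step 4 complete the proof.
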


By \eqref{eq:4.30}, the restricted enveloping algebra $\mathfrak{u}(\fg)$ is the quotient
\begin{equation}
\mathfrak{u}(\fg)= U(\fg)/\langle E_{\alpha}^{p},h_{\alpha_{i}}^{p}-h_{\alpha_{i}}\mid \alpha\in \Delta, \alpha_i\in \Pi\rangle.
\end{equation}
Then we recall the following result.
\begin{theorem}[\cite{C}]\label{th:the-class-u(g)-mod}
The highest weight modules $L(\mu)$ exhaust all irreducible $\mathfrak{u}(\fg)$-modules, 
up to equivalence, where  $\mu=\mu_1\omega_1+\cdots+\mu_n\omega_n$ with $0\le \mu_i\le p-1$, 
and $\omega_i$ $(1\le i\le n)$ are the fundamental weights, 
i.e., $\langle\omega_i,\alpha_j^{\vee}\rangle=\delta_{i,j}$ 
for all simple coroots $\alpha_j^{\vee}$. 
\end{theorem}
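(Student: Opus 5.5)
This statement is quoted from \cite{C} (Curtis), and the paper does not re-prove it. If I were to prove it, I would work within the standard theory of restricted Lie algebras. Fix the triangular decomposition $\fg=\mathfrak{n}^-\oplus\fh\oplus\mathfrak{n}^+$ coming from the Chevalley basis. Then $\mathfrak{u}(\fg)\cong \mathfrak{u}(\mathfrak{n}^-)\otimes\mathfrak{u}(\fh)\otimes\mathfrak{u}(\mathfrak{n}^+)$ by the restricted PBW theorem, and $\mathfrak{u}(\fg)$ is finite dimensional.

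First, I would show that every irreducible $\mathfrak{u}(\fg)$-module $M$ is finite dimensional and has a nonzero vector $v$ killed by $\mathfrak{n}^+$. Finite dimensionality holds because $M$ is a quotient of $\mathfrak{u}(\fg)$. Since each $E_\alpha$ ($\alpha>0$) acts nilpotently ($E_\alpha^p=0$), Engel's theorem for the restricted nilpotent algebra $\mathfrak{n}^+$ gives a common kernel vector. The $h_{\alpha_i}$ commute and preserve the space $M^{\mathfrak{n}^+}$. Since $h_{\alpha_i}^p=h_{\alpha_i}$, they act semisimply with eigenvalues in $\BF_p$. So we may choose $v$ to be a simultaneous eigenvector with $h_{\alpha_i}v=\mu_i v$ and $\mu_i\in\{0,\ldots,p-1\}$. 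This produces the weight $\mu=\sum\mu_i\omega_i$.

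Next, I would introduce the baby Verma module $Z(\mu)=\mathfrak{u}(\fg)\otimes_{\mathfrak{u}(\fh\oplus\mathfrak{n}^+)}\BF_\mu$. It is free of rank one over $\mathfrak{u}(\mathfrak{n}^-)$. It has a unique maximal submodule, namely the sum of all submodules not meeting the one-dimensional $\mu$-weight space. Hence $Z(\mu)$ has a unique simple quotient, which I call $L(\mu)$. By the universal property, $M$ is a quotient of $Z(\mu)$, so $M\cong L(\mu)$. Conversely, each $L(\mu)$ exists for every $\mu$ with $0\le\mu_i\le p-1$. Finally, the $L(\mu)$ are pairwise nonisomorphic, because the highest weight is recovered from the $\mathfrak{n}^+$-invariants. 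Indeed, in $L(\mu)$ the space of $\mathfrak{n}^+$-invariants is exactly $\BF v_\mu$: any other invariant of lower weight would generate a proper submodule.

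The main subtlety is that the weight $\mu$ only lives in $\BF_p$-valued data on the $h_{\alpha_i}$, whereas the statement speaks of integral weights. One has to check that the identification $\mu_i\leftrightarrow \mu(h_{\alpha_i})$ with $0\le\mu_i\le p-1$ is a bijection. This holds because the $h_{\alpha_i}$ form a basis of $\fh$. That fact is clear when $\fg$ is simple as assumed in this section, so that $\fh=\spanf\{h_{\alpha_i}\}$ with no extra central element to account for.
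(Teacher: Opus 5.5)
The paper gives no proof of this statement; it is quoted from \cite{C}, so your outline has to stand on its own. The overall strategy is the standard one: restricted PBW, an $\mathfrak{n}^+$-invariant $\fh$-eigenvector, baby Verma modules $Z(\mu)$, and unique simple quotients. However, the step that produces the unique simple quotient is carried over from characteristic zero and is false here.

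The $\fh$-weights of $Z(\mu)$ are linear forms on $\fh$ over $\BF$. So $\mu-\nu$ equals $\mu$ whenever $\nu\in\BN\Delta_+$ pairs to $0$ modulo $p$ with every $h_{\alpha_i}$. In type $A_2$, for example, the PBW basis vector $E_{-\alpha_1}^{p-1}E_{-\alpha_2}^{p-1}E_{-\alpha_1-\alpha_2}v_\mu\neq 0$ has $\fh$-weight $\mu-p(\alpha_1+\alpha_2)=\mu$. For $\mathfrak{sl}(n+1,\BF)$ with $p\mid n+1$ it is worse, since $\eta$ itself pairs to zero with all of $\fh$. Hence $Z(\mu)_\mu$ is not one-dimensional, and a maximal submodule can meet it. For $\mu=0$, the kernel $\mathfrak{u}(\mathfrak{n}^-)^+v_0$ of $Z(0)\to\BF$ (the trivial module) is the maximal submodule, and it contains the vector just displayed. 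So ``proper'' and ``not meeting the $\mu$-weight space'' are different conditions. Your description of the maximal submodule is wrong, and your reason for the sum of proper submodules being proper (each avoids the $\mu$-weight space) is false. Your last step has the same defect: ``an invariant of lower weight'' presupposes an ordering of $\fh$-weights, which does not exist modulo $p$.

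The repair is to use the structure of $\mathfrak{u}(\mathfrak{n}^-)$ instead of $\fh$-weights.
\begin{itemize}
\item Grade $\mathfrak{u}(\mathfrak{n}^-)$ by height, $\deg E_{-\alpha}=-\mathrm{ht}(\alpha)$. This is compatible with the relations $E_{-\alpha}^p=0$. Since $\mathfrak{u}(\mathfrak{n}^-)$ is finite dimensional with degree-zero part $\BF$, its augmentation ideal $\mathfrak{u}(\mathfrak{n}^-)^+$ is nilpotent.
\item If a submodule $N\subseteq Z(\mu)$ is not contained in the hyperplane $\mathfrak{u}(\mathfrak{n}^-)^+v_\mu$, it contains some $(1+y)v_\mu$ with $y$ nilpotent. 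Then $1+y$ is invertible in $\mathfrak{u}(\mathfrak{n}^-)$, so $v_\mu\in N$ and $N=Z(\mu)$. Thus all proper submodules lie in that hyperplane, and their sum is the unique maximal submodule.
\item To distinguish the $L(\mu)$, use coinvariants: $L(\mu)/\mathfrak{n}^-L(\mu)\cong Z(\mu)/\mathfrak{u}(\mathfrak{n}^-)^+v_\mu\cong\BF_\mu$ as $\fh$-modules, which is an isomorphism invariant. Alternatively, show the maximal submodule is homogeneous for the height grading and argue by degree rather than weight.
\end{itemize}

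Two smaller points. First, Engel's theorem needs every element of $\mathfrak{n}^+$ to act nilpotently, not just the basis vectors $E_\alpha$. In characteristic $p$, nilpotent operators can bracket to the identity (e.g.\ $d/dt$ and multiplication by $t$ on $\BF[t]/(t^p)$). Here it does hold, because $x^{[p]^k}=0$ for $x\in\mathfrak{n}^+$ and $k\gg 0$. Second, the $h_{\alpha_i}$ form a basis of $\fh$ for $\mathfrak{sl}(n+1,\BF)$ also when $p\mid n+1$, since the centre $\BF I_{n+1}$ lies in their span. So no simplicity hypothesis is needed. This matters because the paper applies the theorem in exactly that case in the proof of Theorem~\ref{th:clas-type-A-quo-kp}.
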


On the other hand, for $\alpha\in\Delta$, we have
\begin{align*}
    \iota(e_\alpha)_{-1}\iota(e_\alpha)=\Res_x x^{-1}Y(\iota(e_\alpha),x)\iota(e_\alpha)=\Res_x \epsilon(\alpha,\alpha)xE^-(-\alpha,x)\iota(e_{2\alpha})=0.
\end{align*}
The isomorphism of vertex algebras given in Theorem \ref{th:iso-1} 
implies $\psi(E_{\alpha}(-1)^{2}\mathbf{1})=\iota(e_\alpha)_{-1}\iota(e_\alpha)$.
Thus we have the following result, which generalizes \cite[Lemma 6.9]{JLM}.
\begin{lemma}\label{lem:E2=0}
For all  $\alpha\in \Delta$,  $E_{\alpha}(-1)^{2}\mathbf{1}=0$ in $L_{\hat{\fg}}(1,0)$.
\end{lemma}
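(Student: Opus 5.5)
The argument transports a computation in $V_{L,\BF}$ to $L_{\hat{\fg}}(1,0)$ through the isomorphism of Theorem \ref{th:iso-1}. So I would first show that $\iota(e_\alpha)_{-1}\iota(e_\alpha)=0$ in $V_{L,\BF}$, and then identify this element with the image of $E_\alpha(-1)^2\mathbf{1}$.

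\emph{Step 1: the vanishing in $V_{L,\BF}$.} Fix $\alpha\in\Delta$. By the vertex operator formula \eqref{eq:ver-ope} together with \cite[(6.4.37), (6.4.51)]{LL} (the same input used to derive \eqref{eq:3.9}), we have
\[
Y(\iota(e_\alpha),x)\iota(e_\alpha)=\epsilon(\alpha,\alpha)x^{\langle\alpha,\alpha\rangle}E^{-}(-\alpha,x)\iota(e_{2\alpha}),
\]
and here $\langle\alpha,\alpha\rangle=2$. Equivalently, \eqref{eq:3.9} with $m=-1$ and $\beta=\alpha$ gives
\[
\iota(e_\alpha)_{-1}\iota(e_\alpha)=\epsilon(\alpha,\alpha)s_{\alpha,-2}\iota(e_{2\alpha}).
\]
This vanishes because of the convention $s_{\alpha,m}=0$ for $m<0$. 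One can also see it directly: $\Res_x x^{-1}\cdot x^2E^-(-\alpha,x)$ only involves nonnegative powers $x^{1+m}$, so the residue is zero. This identity already holds in $V_{L,\BZ}$, hence it holds after tensoring with $\BF$.

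\emph{Step 2: transport through $\psi$.} Let $\psi:V_{\hat{\fg}}(1,0)\to V_{L,\BF}$ be the surjective homomorphism from the proof of Theorem \ref{th:iso-1}, with $\ker\psi=J$. It satisfies $E_\alpha(x)=Y(\iota(e_\alpha),x)$ for every $\alpha\in\Delta$, not only for $\alpha\in\pm\Pi$; this is the content of \eqref{eq:3.14}. Since $\psi$ is a $\hat{\fg}$-module map sending $\mathbf{1}$ to $\mathbf{1}$, we get
\[
\psi(E_\alpha(-1)E_\alpha(-1)\mathbf{1})=\iota(e_\alpha)_{-1}\iota(e_\alpha)_{-1}\mathbf{1}=\iota(e_\alpha)_{-1}\iota(e_\alpha)=0,
\]
using $\iota(e_\alpha)_{-1}\mathbf{1}=\iota(e_\alpha)$ from the creation property. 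Therefore $E_\alpha(-1)^2\mathbf{1}\in\ker\psi=J$, so its image in $L_{\hat{\fg}}(1,0)=V_{\hat{\fg}}(1,0)/J$ is zero.

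The only point needing care is that the $\hat{\fg}$-module structure on $V_{L,\BF}$ is defined for all roots via \eqref{eq:3.14}, so that $\psi$ intertwines $E_\alpha(m)$ for non-simple $\alpha$ as well. This is automatic because $\psi$ is a $\hat{\fg}$-module homomorphism. Apart from that, the argument is a residue computation, and I expect no real obstacle.
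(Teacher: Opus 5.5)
Your proposal is correct and is essentially the paper's own argument. The paper likewise computes $\iota(e_\alpha)_{-1}\iota(e_\alpha)=\Res_x\epsilon(\alpha,\alpha)xE^-(-\alpha,x)\iota(e_{2\alpha})=0$ in $V_{L,\BF}$, then uses $\psi(E_\alpha(-1)^2\mathbf{1})=\iota(e_\alpha)_{-1}\iota(e_\alpha)$ together with $\ker\psi=J$ from Theorem \ref{th:iso-1}; your additional remarks (the vanishing as $s_{\alpha,-2}=0$ via \eqref{eq:3.9}, and $\psi$ intertwining $E_\alpha(m)$ for all roots via \eqref{eq:3.14}) only make explicit what the paper leaves implicit.
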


The main result of this section is as follows.

\begin{theorem}\label{th:clas-type-ADE}
Let $L$ be the root lattice of one of the following types: 
$A_n$ $(n\ge 1\text{ and }p\nmid n+1)$, 
$D_n$ $(n\ge 4)$, $E_{6}$ $(p>3)$, $E_7$, or $E_8$.
Then the modules $V_{\beta+L,\BF}$, where $\beta$ runs over a complete set 
of representatives of $L^*/L$, exhaust all irreducible $\BN$-graded $V_{L,\BF}$-modules
up to equivalence.
\end{theorem}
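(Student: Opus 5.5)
Via the isomorphism $\psi$ of Theorem \ref{th:iso-1}, $V_{L,\BF}\cong L_{\hat{\fg}}(1,0)$ as $\BN$-graded vertex algebras. It therefore suffices to show that $L_{\hat{\fg}}(1,0)$ has at most $\det G_L=|L^*/L|$ inequivalent irreducible $\BN$-graded modules. Theorem \ref{th:ZM} already supplies $\det G_L$ pairwise inequivalent irreducible modules $V_{\beta+L,\BF}$. Each is $\BN$-graded after shifting by the minimal value of $\langle\beta+\alpha,\beta+\alpha\rangle/2$ on the coset; note that $\BN$-gradedness is only with respect to an $\BN$-grading compatible with $\deg$. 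The two counts then force the $V_{\beta+L,\BF}$ to exhaust all irreducible $\BN$-graded modules.

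To bound the number of irreducibles, I would use the Zhu algebra correspondence of \cite{DR}, whose proof is characteristic-free. Irreducible $\BN$-graded $V$-modules are determined by their bottom degrees, which are irreducible $A(V)$-modules, and this gives an injection from isomorphism classes of the former into those of the latter. I will show that $A(L_{\hat{\fg}}(1,0))$ is a quotient of $\mathfrak{u}(\fg)$ and that its irreducible modules are among few $L(\mu)$.

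\textbf{Restricted quotient.} First I check that $J_0\subset J$, where $J_0$ is generated by $(a(-1)^p-a^{[p]}(-p))\mathbf{1}$. This holds because $J_0$ is a proper graded ideal; indeed each generator has degree $p>0$, so $J_0$ misses $\mathbf{1}$, and $J$ is the maximal graded ideal. Hence $L_{\hat{\fg}}(1,0)$ is a quotient of $V^0_{\hat{\fg}}(1,0)$. Since $A(\cdot)$ is right exact on surjections, $A(L_{\hat{\fg}}(1,0))$ is a quotient of $A(V^0_{\hat{\fg}}(1,0))\cong\mathfrak{u}(\fg)$ by Theorem \ref{th:JLM}. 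By Theorem \ref{th:the-class-u(g)-mod}, every irreducible $A(L_{\hat{\fg}}(1,0))$-module is then some $L(\mu)$ with $0\le\mu_i\le p-1$.

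\textbf{Constraint from Lemma \ref{lem:E2=0}.} By that lemma, $[E_\alpha(-1)^2\mathbf{1}]=0$ in the Zhu algebra, and I would compute its image in $\mathfrak{u}(\fg)$. By the standard computation of \cite{JLM} for $\mathfrak{sl}_2$, $[E_\alpha(-1)^2\mathbf{1}]$ corresponds to $E_\alpha^2$, up to lower terms that vanish. So $E_\alpha^2$ acts as zero on the bottom degree $L(\mu)$ for every root $\alpha$. Taking $\alpha=-\theta$ or $-\alpha_i$ and applying $E_{-\alpha}^2$ to a highest weight vector $v_\mu$ gives constraints, which I would verify via $\mathfrak{sl}_2$-triples. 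Specifically, $E_{-\alpha_i}^2v_\mu=0$ forces $\mu_i\le1$ when $p>2$. Using the highest root $\theta$ with $E_{-\theta}^2v_\mu=0$ gives $\langle\mu,\theta^\vee\rangle\le1$. The $\theta$-condition requires care: I would use the $\mathfrak{sl}_2$-copy $\{E_\theta,E_{-\theta},\theta\}$ acting on $v_\mu$, which generates a module with highest weight $\langle\mu,\theta\rangle$, and I expect this module to be nonzero at $E_{-\theta}^2v_\mu$ whenever $2\le\langle\mu,\theta\rangle\le p-1$.

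The last step is the main obstacle. We must confirm that $\langle\mu,\theta\rangle<p$ so that the $\mathfrak{sl}_2$ argument holds; if it fails, reduce first using the simple-root bounds $\mu_i\le1$. We must also check $E_{-\theta}^2v_\mu\ne0$ in $L(\mu)$ rather than in a Verma module; this follows since $E_\theta^2E_{-\theta}^2v_\mu=c\,v_\mu$ with $c=2\langle\mu,\theta\rangle(\langle\mu,\theta\rangle-1)\not\equiv0$. The weights with all $\mu_i\in\{0,1\}$ and $\sum_i a_i\mu_i\le1$, where $\theta=\sum a_i\alpha_i$, are exactly $0$ and the minuscule fundamental weights $\omega_i$ with $a_i=1$. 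Their number is the number of level-one dominant weights, namely $|L^*/L|=\det G_L$. This yields the upper bound and completes the argument.
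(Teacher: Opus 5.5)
Your overall route is the paper's: transport through Theorem \ref{th:iso-1}, realize $A(L_{\hat{\fg}}(1,0))$ as a quotient of $\mathfrak{u}(\fg)/\langle E_\alpha^2\mid\alpha\in\Delta\rangle$, bound the irreducibles, and match against Theorem \ref{th:ZM}. However, the step that pins down $\mu$ has a genuine gap.

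From $E_{-\alpha_i}^2v_\mu=0$ you correctly get $\mu_i\in\{0,1\}$. For $\theta$, though, $E_\theta^2E_{-\theta}^2v_\mu=2\lambda(\lambda-1)v_\mu$ with $\lambda=\langle\mu,\theta\rangle$ only gives $\lambda\equiv 0,1\pmod p$. Your fallback does not restore $\lambda<p$. After $\mu_i\le 1$ you only know $\lambda\le\sum_i a_i=h-1$, where $h$ is the Coxeter number. This bound is $\ge p$ in many cases the theorem covers: $A_n$ with $n\ge p$, $D_n$ with $2n-3\ge p$, $E_8$ with $p\le 29$, and so on.

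Concretely, take $A_3$ with $p=3$, which is allowed since $3\nmid 4$. The weight $\mu=\omega_1+\omega_2+\omega_3$ has all $\mu_i=1$ and $\langle\mu,\theta\rangle=3$, so $c=2\cdot 3\cdot 2\equiv 0$. Your two tests therefore leave five candidates, $0,\omega_1,\omega_2,\omega_3,\omega_1+\omega_2+\omega_3$. That is one more than $\det G_L=4$, so the exhaustion argument collapses.

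The repair is to impose the constraint at every positive root, as the paper does in \eqref{eq:hig-wei-res}, and to pick the root carefully. Suppose all $\mu_i\in\{0,1\}$ and $\langle\mu,\theta\rangle\ge 2$. Descend from $\theta$ to a simple root by subtracting one simple root at a time while staying in $\Delta_+$; this is always possible. Each step lowers the pairing with $\mu$ by some $\mu_i\in\{0,1\}$, and the chain ends at a value in $\{0,1\}$. Hence some $\alpha\in\Delta_+$ has $\langle\mu,\alpha\rangle=2$ exactly. Using the $\mathfrak{sl}_2$-triple $E_\alpha$, $\epsilon(\alpha,-\alpha)E_{-\alpha}$, $\alpha$, you get $E_\alpha^2E_{-\alpha}^2v_\mu=4v_\mu\neq 0$ since $p>2$. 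This contradicts $E_{-\alpha}^2L(\mu)=0$. Only $0$ and the $\omega_i$ with $a_i=1$ survive, and the rest of your count goes through.

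A smaller point concerns your justification of $J_0\subset J$. That the generators have positive degree is not enough by itself, because positive modes lower degree. What works is that $a(-m)^p-a^{[p]}(-pm)$ is central in $U(\hat{\fg})$: the would-be central term $\langle a,(\ad a)^{p-1}b\rangle$ vanishes by invariance. So $J_0$ is spanned by vectors $(a(-m)^p-a^{[p]}(-pm))v$ and lies in degrees $\ge p$.
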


\begin{proof}
Note that $L_{\hat{\fg}}(1,0)$ is a quotient of $V_{\hat{\fg}}^{0}(1,0)$.
Then $A(L_{\hat{\fg}}(1,0))$ is a quotient algebra of 
$A(V_{\hat{\fg}}^{0}(1,0))$ (see \cite[Theorem 3.3]{DR}). 
By Lemma \ref{lem:E2=0}, we also have $[E_{\alpha}]^2=0$ in 
$A(L_{\hat{\fg}}(1,0))$ for all $\alpha\in \Delta$. 
Consequently, Theorem \ref{th:JLM} shows that $A(L_{\hat{\fg}}(1,0))$ 
is isomorphic to a quotient of $\mathfrak{u}(\fg)/\langle (E_{\alpha})^2\mid \alpha\in \Delta\rangle$.

We now consider the irreducible modules of 
$\mathfrak{u}(\fg)/\langle (E_{\alpha})^2\mid \alpha\in \Delta\rangle$.
Let $L(\mu)$ be an irreducible module of $\mathfrak{u}(\fg)$.
By Theorem \ref{th:the-class-u(g)-mod}, $L(\mu)$ is a highest weight 
module with $\mu=\sum_{i=1}^{n}\mu_i\omega_i$ and $0\le \mu_i\le p-1$ for all $i$.
Let $\Delta_+$ be the set of positive roots.
We claim that such an $L(\mu)$ gives an irreducible module for
$\mathfrak{u}(\fg)/\langle (E_{\alpha})^2\mid \alpha\in \Delta\rangle$ 
if and only if
\begin{equation}\label{eq:hig-wei-res}
	\mu (h_{\alpha})=\langle \mu,\alpha^{\vee}\rangle=\sum_{i=1}^{n}\mu_i\langle \omega_i,\alpha^{\vee}\rangle = 0\text{ or }1
\end{equation}
for all $\alpha\in \Delta_+$.
The ``if'' part is straightforward to check. 
For the ``only if'' part, 
if \eqref{eq:hig-wei-res} fails, then there exists some 
$\alpha\in \Delta_+$ such that $(E_{-\alpha})^2 w\neq 0$, 
where $w$ is a highest weight vector of $L(\mu)$, 
contradicting the requirement that $(E_{-\alpha})^2$ acts trivially.

Let $M_n$ be the Cartan matrix of $\fg$.
The highest weights of all irreducible modules $L(\mu)$ for 
$\mathfrak{u}(\fg)/\langle (E_{\alpha})^2\mid \alpha\in \Delta\rangle$ 
are listed in Table \ref{tab:ADE-data}.
From Table \ref{tab:ADE-data}, the numbers of irreducible highest 
weight modules $L(\mu)$ are $n+1$, $4$, $3$, $2$, and $1$ for $\fg$ of 
type $A_n$ ($n\ge 1$), $D_n$ ($n\ge 4$), $E_6$, $E_7$, and $E_8$, respectively.
These numbers equal $\det M_n$ in each case.
Note that the Gram matrix $G_{L}$ of the ADE-type root lattice 
$L$ is exactly the Cartan matrix of the same type.
Hence $A(L_{\hat{\fg}}(1,0))$ has at most $\det G_L$ 
irreducible modules up to equivalence.
Consequently, by \cite[Propositions 2.10, 2.11]{JLM}, 
$L_{\hat{\fg}}(1,0)$ has at most $\det G_L$ irreducible 
$\BN$-graded modules up to equivalence.

\begin{table}[htbp]
\centering
\caption{The highest weights $\mu$}
\label{tab:ADE-data}
\begin{tabular}{lccc}
\hline
type&   the highest root $\theta$ & $\mu$ & $\det M_n$ \\
\hline
$A_n$&$\alpha_1+\cdots+\alpha_n$ & $0$, $\omega_1$, $\ldots$, $\omega_n$ & $n+1$    \\
$D_n$ & $\alpha_1+2\alpha_2+\cdots+2\alpha_{n-2}+\alpha_{n-1}+\alpha_n$ & $0$, $\omega_1$, $\omega_{n-1}$, $\omega_n$& $4$ \\
$E_6$ & $\alpha_1+2\alpha_2+2\alpha_3+3\alpha_4+2\alpha_5+\alpha_6$ & $0$, $\omega_1$, $\omega_6$ & $3$\\
$E_7$& $2\alpha_1+2\alpha_2+3\alpha_3+4\alpha_4+3\alpha_5+2\alpha_6+\alpha_7$& $0$, $\omega_7$ & $2$\\
$E_8$& $2\alpha_1+3\alpha_2+4\alpha_3+6\alpha_4+5\alpha_5+4\alpha_6+3\alpha_7+2\alpha_8$ & $0$ & $1$\\
\hline
\end{tabular}
\end{table}

On the other hand, recall that $V_{L,\BF}$ is an $\BN$-graded vertex algebra.
For $\beta\in L^*$, $V_{\beta+L,\BF}$ can be regarded as an $\BN$-graded 
module for $V_{L,\BF}$ with $V_{\beta+L,\BF}=\bigoplus_{m\in \BN}(V_{\beta+L,\BF})_{(m)}$,
where $(V_{\beta+L,\BF})_{(m)}$ is the linear span of elements of the form 
$s_{\alpha^1,m_1}\cdots s_{\alpha^k,m_k}\iota(e_{\beta+\alpha})$ 
with $k\in \BN$, $m_i\in \BZ_+$, $\alpha^i\in \Pi$, and $\alpha\in L$ such that 
$\frac{\langle\beta+\alpha,\beta+\alpha\rangle}{2}-\mathop{\min}\limits_{\gamma\in\beta+ L}\frac{\langle\gamma,\gamma\rangle}{2}+\sum_{i=1}^{k}m_i=m$ $(\in \BN)$.
By the assumption on $p$, we have $\det G_{L}\not\equiv 0\pmod{p}$.
It follows from Theorem \ref{th:ZM} that there exist $\det G_{L}$ 
inequivalent irreducible $\BN$-graded modules of $V_{L,\BF}$, 
since $|L^*/L|=\det G_{L}$ (see, for example, \cite[Section 2.3]{Griess}).
Therefore, the desired result follows from Theorem \ref{th:iso-1}.
\end{proof}

\section{Lattice vertex algebras $V_{L,\BF}$ of type $A_n$ with $n+1 \equiv 0\pmod{p}$}\label{sec:4}

In this section, let $L$ be a root lattice of type $A_n$ with 
$n+1\equiv 0\pmod{p}$.
We determine the nonzero maximal $\BN$-graded ideal of $V_{L,\BF}$, 
and obtain certain irreducible modules for $V_{L,\BF}$.
Furthermore, when $n+1=ap$ for some $a\in \BZ_+$ with $\gcd(a,p)=1$, 
we classify the irreducible $\BN$-graded modules for the simple quotient of $V_{L,\BF}$.

\subsection{The simple quotient of $V_{L,\BF}$}

We first recall an (abstract) bialgebra $\B$ (cf. \cite{B1}).
Let $\B$ be a vector space with a designated basis $\{\D^{(k)}\mid k\in \BN\}$.
Then $\B$ is a  bialgebra with
\begin{align}
&\D^{(j)}\cdot\D^{(k)}=\binom{j+k}{j}\D^{(j+k)},\;\D^{(0)}=1,\\
&\Delta(\D^{(k)})=\sum_{r=0}^{k}\D^{(k-r)}\otimes \D^{(r)},\;\varepsilon(\D^{(k)})=\delta_{k,0}
\end{align}
for $j,k\in \BN$.
It is known (see \cite{LM}; cf. \cite{B1}) that a vertex 
algebra $V$ carries a $\B$-module structure such that 
\begin{equation}\label{eq:B-mod-act}
\D^{(k)}v=v_{-k-1}\mathbf{1}\;\text{ for } k\in \BN \text{ and } v\in V.
\end{equation}

We now turn to another bialgebra $\mathcal{H}$ (see \cite{LM2}).
Consider the $3$-dimensional simple Lie algebra $\mathfrak{sl}_2$ 
over $\BC$ with a basis $\{L_{-1}, L_{0}, L_{1}\}$ such that
\begin{equation*}
[L_1,L_{-1}]=2L_{0},\;[L_{0},L_{\pm 1}]=\mp L_{\pm 1}.
\end{equation*}
The universal enveloping algebra $U(\mathfrak{sl}_2)$ is naturally 
a Hopf algebra, hence also a bialgebra over $\BC$.
For $k\in \BN$, set 
\begin{equation}
L_{\pm 1}^{(k)}:=\frac{(L_{\pm 1})^{k}}{k!},\;L_{0}^{(k)}:=\binom{-2L_0}{k}=\frac{(-2L_0)(-2L_0-1)\cdots (-2L_0-k+1)}{k!}
\end{equation}
in $U(\mathfrak{sl}_2)$.
Let $U(\mathfrak{sl}_2)_{\BZ}$ be the $\BZ$-span in $U(\mathfrak{sl}_2)$ of elements
\begin{equation}\label{eq:basis}
L_{-1}^{(i)}L_{0}^{(j)}L_{1}^{(k)}\;\text{ for }i,j,k\in \BN.
\end{equation}
By a result of Kostant (cf. \cite[\S 26]{H}), 
$U(\mathfrak{sl}_2)_{\BZ}$ is an integral form of $U(\mathfrak{sl}_2)$ as a Hopf algebra.
For any $j,k\in\BN$, the following relations hold in $U(\mathfrak{sl}_2)_{\BZ}$:
\begin{align}
L_{0}^{(j)}L_{0}^{(k)}&=\sum_{t=0}^{j}\binom{j}{t}\binom{k+t}{j}L_{0}^{(k+t)},\label{eq:alg-str}\\
L_{\pm 1}^{(j)}L_{\pm 1}^{(k)}&=\binom{j+k}{j}L_{\pm 1}^{(j+k)},\\
L_{0}^{(j)}L_{\pm 1}^{(k)}&=\sum_{t=0}^{j}\binom{\pm 2k}{t}L_{\pm 1}^{(k)}L_{0}^{(j-t)},\\
L_{1}^{(j)}L_{-1}^{(k)}&=\sum_{s=0}^{\min(j,k)}\sum_{t=0}^{s}\binom{-j-k+2s}{t}(-1)^{s}L_{-1}^{(k-s)}L_{0}^{(s-t)}L_{1}^{(j-s)},\\
L_{\pm 1}^{(j)}L_{0}^{(k)}&=\sum_{t=0}^{k}\binom{\mp2j}{t}L_{0}^{(k-t)}L_{\pm 1}^{(j)}.\nonumber
\end{align}
Furthermore, the coalgebra structure on $U(\mathfrak{sl}_2)_{\BZ}$ is given by 
\begin{align}\label{eq:bialg-str}
\varepsilon\big( L_{r}^{(k)}\big)=\delta_{k,0},\;
\Delta\big( L_{r}^{(k)}\big)=\sum_{i=0}^{k}L_{r}^{(k-i)}\otimes L_{r}^{(i)}
\end{align}
for $r\in \{1,0,-1\}$ and $k\in \BN$.
Following \cite{LM2}, define 
\begin{equation*}
\mathcal{H}:=\BF\otimes_{\BZ}U(\mathfrak{sl}_2)_{\BZ},
\end{equation*}
which is a Hopf algebra over $\BF$.
Moreover, the elements of the form \eqref{eq:basis} form a basis of $\mathcal{H}$, 
and the bialgebra structure is given by \eqref{eq:alg-str}--\eqref{eq:bialg-str}.

Note that $\mathcal{H}$ is a $\BZ$-graded algebra with
\begin{equation}
\deg L_{\pm 1}^{(k)}=\mp k,\;\deg L_0^{(k)}=0\;\text{ for }k\in \BN.
\end{equation}
A $\BZ$-graded $\mathcal{H}$-module is defined in the usual way.
Furthermore, recall from \cite[Definition 3.2]{LM2} that
a {\em $\BZ$-graded weight $\mathcal{H}$-module} is a $\BZ$-graded 
$\mathcal{H}$-module $W=\bigoplus_{m\in\BZ} W_{(m)}$ on which $L_{0}^{(k)}$ 
acts as $\binom{-2 \deg }{k}$ for $k\in \BN$, i.e.,
\begin{equation}
L_{0}^{(k)}|_{W_{(m)}}=\binom{-2m}{k}\;\text{ for }k\in \BN, m\in \BZ.
\end{equation}

We then recall $\mathcal{H}$-module vertex algebras over $\BF$, 
which are analogues of quasi-vertex algebras over a field of 
characteristic zero (see \cite[\S 2.8]{FHL}).

\begin{definition}[{\cite[Definition~3.7]{LM2}}]\label{def:H-mod-va}
An {\em $\mathcal{H}$-module vertex algebra} is a $\BZ$-graded vertex 
algebra $V=\bigoplus_{m\in\BZ}V_{(m)}$ which is also a $\BZ$-graded 
weight $\mathcal{H}$-module (with the natural $\B$-module action) 
satisfying the following conditions:
\begin{enumerate}[(i)]
\item $V_{(m)}=0$ for $m$ sufficiently negative.
\item $L_{1}^{(k)}\mathbf{1}=\varepsilon\big( L_{1}^{(k)}\big)\mathbf{1}=\delta_{k,0}\mathbf{1}$ for $k\in\BN$.
\item  For homogeneous $v\in V$,
\begin{equation}
e^{xL_1}Y(v,x_0)e^{-xL_1}=Y\big(e^{x(1-xx_0)L_1}(1-xx_0)^{-2\deg}v,x_0/(1-xx_0)\big).
\end{equation} 
\end{enumerate}
\end{definition}

Let $L$ be the root lattice of type $A_n$ with $n+1\equiv 0\pmod{p}$.
We now show that $V_{L,\BF}$ is an $\mathcal{H}$-module vertex algebra.
Recall that in characteristic zero, $(V_{L},Y,\mathbf{1},\omega)$ is 
a vertex operator algebra with
$\omega=\frac{1}{2}\sum_{i=1}^{n}\alpha_i(-1)\beta^{i}(-1)\mathbf{1}$,
where $\{\beta^1,\ldots,\beta^n\}$ is the dual basis of 
$\{\alpha_1,\ldots,\alpha_n\}$ with respect to $\langle\cdot,\cdot\rangle$.
Using \cite[(6.4.76)]{LL}, we have
\begin{equation}\label{eq:L-1}
L(1)=\Res_{x}x^2Y(\omega,x)=\frac{1}{2}\sum_{i=1}^{n}\sum_{t\in \BZ}\alpha_i(t)\beta^i(-t+1).
\end{equation}
For $\alpha\in L$, since $h(m)\iota(e_{\alpha})=0$ for $h\in \fh$ and 
$m\in \BZ_+$, it follows from \eqref{eq:L-1} that $L(1)\iota(e_{\alpha})=0$.
Then 
\begin{equation}\label{eq:e-L1}
e^{xL(1)}\iota(e_{\alpha})=\iota(e_{\alpha}).
\end{equation}

For $\gamma_1,\ldots,\gamma_k\in \pm \Pi$, by \cite[(5.2.38)]{FHL}, 
\eqref{eq:e-L1}, and $\wt \iota(e_{\gamma_i})=\frac{\langle\gamma_i,\gamma_i\rangle}{2}$
for $1\le i\le k$, 
we obtain
\begin{align*}
e^{xL(1)}Y(\iota(e_{\gamma_i}),x_0)e^{-xL(1)}&=Y\big(e^{x(1-xx_0)L(1)}(1-xx_0)^{-2L(0)}\iota(e_{\gamma_i}),x_0/(1-xx_0)\big)\nonumber\\
&=(1-xx_0)^{-\langle\gamma_i,\gamma_i\rangle}Y(\iota(e_{\gamma_i}),x_0/(1-xx_0)).
\end{align*}
Consequently,
\begin{align}\label{eq:L-1-r-act}
&\?e^{xL(1)}Y(\iota(e_{\gamma_1}),x_1)\cdots Y(\iota(e_{\gamma_k}),x_{k})\mathbf{1}\nonumber\\
&=e^{xL(1)}Y(\iota(e_{\gamma_1}),x_1)(e^{-xL(1)} e^{xL(1)})\cdots (e^{-xL(1)}e^{xL(1)})Y(\iota(e_{\gamma_k}),x_{k})( e^{-xL(1)} e^{xL(1)})\mathbf{1}\nonumber \\
&=\big(e^{xL(1)}Y(\iota(e_{\gamma_1}),x_1)e^{-xL(1)}\big) e^{xL(1)}\cdots e^{-xL(1)}\big(e^{xL(1)}Y(\iota(e_{\gamma_k}),x_{k})e^{-xL(1)}\big)\big(e^{xL(1)}\mathbf{1}\big)\nonumber \\
&=\Big(\prod_{i=1}^{k}(1-xx_i)^{-\langle\gamma_i,\gamma_i\rangle}\Big)Y(\iota(e_{\gamma_1}),x_1/(1-xx_1))\cdots Y(\iota(e_{\gamma_k}),x_k/(1-xx_k))\mathbf{1}.
\end{align}

We now return to the case of characteristic $p$.
By Theorem \ref{th:Mu-th-1} and \cite[Proposition 3.9.3]{LL}, 
each element $v\in V_{L,\BF}$ is an $\BF$-linear combination of 
coefficients of products of the form 
$Y(\iota(e_{\gamma_1}),x_1)\cdots Y(\iota(e_{\gamma_k}),x_k)\mathbf{1}$, 
where $\gamma_1, \ldots, \gamma_k\in \pm \Pi$ and $k\in \BN$.
Since $L$ is an even lattice, the coefficients in \eqref{eq:L-1-r-act} are integers.
Thus, \eqref{eq:L-1-r-act} still holds in $V_{L,\BF}$.
This shows that 
$L(1)^{(m)}V_{L,\BZ}\subset V_{L,\BZ}$.
Therefore, $V_{L,\BF}$ carries an $\mathcal{H}$-module vertex algebra structure with
\begin{equation*}
L_{-1}^{(m)}= \D^{(m)},\quad L_{0}^{(m)}= \binom{-2\deg }{m},\quad L_{1}^{(m)} = L(1)^{(m)}\text{ for } m\in \BN.
\end{equation*}

Next, we show that $V_{L,\BF}$ has a nonzero maximal $\BN$-graded ideal.
Recall from \cite[Proposition 3.1]{L} that there exists a unique nondegenerate 
symmetric invariant bilinear form $(\cdot,\cdot)_{V_{L}}$ on $V_{L}$ 
over $\BC$ such that $(\1,\1)_{V_{L}}=1$.
It is characterized by the following conditions (see \cite[Theorem 3.3]{DG}):
\begin{enumerate}[(1)]
\item $\big(\iota(e_{\alpha}), \iota(e_{\beta})\big)_{V_{L}}=\delta_{\alpha,-\beta}$ for $\alpha, \beta\in L$.
\item $\big(\alpha(m)u,v\big)_{V_{L}}=-\big(u,\alpha(-m)v\big)_{V_{L}}$ for all $u,v \in V_{L}$, $\alpha\in L$, and $m\in \BN$.
\end{enumerate}
Then a symmetric bilinear form on $V_{L,\BF}$, 
denoted by $(\cdot,\cdot)_{V_{L,\BF}}$, is defined as follows (see \cite{Mu}):
\begin{enumerate}[(1)]
\item $\big(\iota(e_{\alpha}), \iota(e_{\beta})\big)_{V_{L,\BF}}=\delta_{\alpha,-\beta}$ for $\alpha, \beta\in L$.
\item $\big(s_{\alpha,m}u,v\big)_{V_{L,\BF}}=\big(u,r_{\alpha,m}v\big)_{V_{L,\BF}}$ for all $u,v \in V_{L,\BF}$, $\alpha\in L$, and $m\in \BN$.
\end{enumerate}
The bilinear form $(\cdot,\cdot)_{V_{L,\BF}}$ is well defined 
since $(V_{L,\BZ}, V_{L,\BZ})_{V_{L}}\subset \BZ$ by \cite[Theorem 3.3]{DG}.

Set
\begin{equation*}
\Rad (\cdot,\cdot)_{V_{L,\BF}}:=\{v\in V_{L,\BF}\mid (v,u)_{V_{L,\BF}}=0 \text{ for all } u\in V_{L,\BF}\}.
\end{equation*}
The following results are straightforward.
\begin{lemma}\label{lem:some-facts-Rad}
	\begin{enumerate}[(i)]
		\item {\normalfont\cite[Lemma 7]{Mu}} If $v\in \Rad(\cdot,\cdot)_{V_{L,\BF}}$, 
    then $r_{\alpha,m}v$ and $s_{\alpha,m}v$ lie in  $\Rad(\cdot,\cdot)_{V_{L,\BF}}$ 
    for all $\alpha\in L$ and $m\in\BN$.
		\item $\Rad(\cdot,\cdot)_{V_{L,\BF}}$ is an $L\times \BN$-graded subspace of $V_{L,\BF}$.
		\item For $u\in M(1)_{\BF}$ and $\alpha\in L$, we have $u\otimes \iota(e_{\alpha})\in \Rad(\cdot,\cdot)_{V_{L,\BF}}$ if and only if $u\in \Rad(\cdot,\cdot)_{V_{L,\BF}}$.
	\end{enumerate}
\end{lemma}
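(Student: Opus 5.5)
The three parts all reduce to the defining adjointness rule $(s_{\alpha,m}u,v)_{V_{L,\BF}}=(u,r_{\alpha,m}v)_{V_{L,\BF}}$ (and its symmetric counterpart), together with the fact that $\mathcal{S}_L$ is an $\BF$-basis of $V_{L,\BF}$.

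For (i), take $v\in\Rad(\cdot,\cdot)_{V_{L,\BF}}$ and any $u$. By symmetry and the adjointness rule,
\[
(s_{\alpha,m}v,u)=(v,r_{\alpha,m}u)=0 .
\]
For $r_{\alpha,m}v$, the symmetric form gives
\[
(r_{\alpha,m}v,u)=(u,r_{\alpha,m}v)=(s_{\alpha,m}u,v)=0 .
\]
Both computations are valid because $r_{\alpha,m}$ and $s_{\alpha,m}$ preserve $V_{L,\BZ}$; for $s_{\alpha,m}$ with $\alpha$ not a basis vector, this follows from \eqref{eq:E-ab=E-aE-b} and Lemma \ref{lem:DG-s--alpha}. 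Everything descends from the integral identity over $\BZ$.

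For (ii), first show the form pairs homogeneous components only with matching duals. Using the characterization (1)--(2) and \eqref{eq:rs-sr}, move all $s$'s across to $r$'s acting on a basis vector. This shows $(h\iota(e_\alpha),h'\iota(e_\beta))=0$ unless $\beta=-\alpha$ and the degrees agree, i.e.\ $m_1+\cdots+m_k=m_1'+\cdots+m'_{k'}$. Since the $\BN$-degree $\sum m_i+\langle\alpha,\alpha\rangle/2$ is determined by $\alpha$ and this sum, the component $(V_{L,\BF})_{(\alpha,m)}$ pairs nontrivially only with $(V_{L,\BF})_{(-\alpha,m)}$. Now write $v\in\Rad(\cdot,\cdot)_{V_{L,\BF}}$ as $v=\sum v_{(\alpha,m)}$ and pair with an arbitrary homogeneous $u\in(V_{L,\BF})_{(-\alpha,m)}$. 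Only $v_{(\alpha,m)}$ contributes, so each component lies in the radical.

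For (iii), use
\[
(u\otimes\iota(e_\alpha),w\otimes\iota(e_\beta))=\delta_{\alpha,-\beta}\,(u,w)_{M(1)_\BF},
\]
which comes from moving $u$ across as $r$'s. The operators $r_{\gamma,m}$ act on $M(1)_\BF\otimes\iota(e_\beta)$ through the $M(1)$ part, since $h(m)$ with $m>0$ kills $\iota(e_\beta)$. Thus the pairing of $u\iota(e_\alpha)$ against all of $V_{L,\BF}$ vanishes iff $(u,w)=0$ for all $w\in M(1)_\BF$. Pairing $u=u\otimes\iota(e_0)$ against all of $V_{L,\BF}$ reduces to the same condition.

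The main point to check carefully is that the $r$'s pass through $\iota(e_\beta)$ with no twisting. I expect this holds because $r_{\gamma,m}$ for $m>0$ only involves positive Heisenberg modes, which commute with $e_\beta$ in the relevant sense.
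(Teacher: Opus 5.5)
Your proposal is correct and matches the argument the paper intends; the paper gives no proof, calling these facts straightforward and quoting (i) from \cite[Lemma 7]{Mu}. Your three steps are exactly that routine verification: adjointness of $s_{\alpha,m}$ and $r_{\alpha,m}$ plus symmetry for (i), orthogonality of the $(\alpha,m)$-component to everything except the $(-\alpha,m)$-component for (ii), and $(u\iota(e_\alpha),w\iota(e_\beta))=\delta_{\alpha,-\beta}(u,w)$ for (iii). The point you flag causes no trouble, because $r_{\gamma,m}$ is built only from the modes $\gamma(j)$ with $j>0$, which act on the $M(1)_\BF$ tensor factor alone and never touch $\iota(e_\beta)$.
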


We then obtain the following result.

\begin{lemma}\label{lem:Rad}
The subspace $\Rad(\cdot,\cdot)_{V_{L,\BF}}$ is a nontrivial ideal of $V_{L,\BF}$.
\end{lemma}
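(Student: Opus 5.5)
We prove two things: that $\Rad(\cdot,\cdot)_{V_{L,\BF}}$ is an ideal, and that it is neither $0$ nor all of $V_{L,\BF}$.

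\textbf{The radical is an ideal.} Since $V_{L,\BF}$ is generated by $\{\iota(e_{\pm\alpha_i})\}$ (Theorem \ref{th:Mu-th-1}), it suffices to show that $\Rad(\cdot,\cdot)_{V_{L,\BF}}$ is stable under all modes $\iota(e_{\gamma})_m$ for $\gamma\in\pm\Pi$. An ideal in a vertex algebra with $\D^{(k)}$-stability is then automatic. Indeed, $\D^{(k)}$ is given by
\[
\D^{(k)}v=v_{-k-1}\mathbf{1},
\]
and $D$-stable left ideals are two-sided by skew symmetry. Alternatively, the modes of a generated subalgebra preserve the subspace by the iterate formula \cite[Proposition 5.7.9]{LL}-type arguments.

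For the generator modes, use the explicit form
\[
Y(\iota(e_\gamma),x)=E^-(-\gamma,x)E^+(-\gamma,x)e_\gamma x^{\gamma}.
\]
Each coefficient of $x$ in this operator is a finite sum of terms $s_{\gamma,k}\,r_{\gamma,l}\,e_\gamma x^{\gamma}$. By Lemma \ref{lem:some-facts-Rad}(i), $r_{\gamma,l}$ and $s_{\gamma,k}$ preserve the radical. It remains to see that $e_\gamma$ preserves it; the factor $x^{\gamma}$ only acts by a scalar on each $L$-graded piece.

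For $e_\gamma$, compute directly from the defining rules of the form. Using rule (2) repeatedly and the commutation \eqref{eq:rs-sr}, the pairing
\[
(h\iota(e_\alpha),\,h'\iota(e_\beta))_{V_{L,\BF}}
\]
reduces to evaluating vacuum pairings. From this one sees that
\[
(e_\gamma u,\,v)_{V_{L,\BF}}=\pm\,(u,\,e_{-\gamma}v)_{V_{L,\BF}}.
\]
The sign comes from the cocycle $\epsilon$, and in characteristic zero the identity follows from invariance of $(\cdot,\cdot)_{V_L}$ together with $L(1)\iota(e_\alpha)=0$. Because this identity holds on $V_{L,\BZ}$, it descends to $V_{L,\BF}$. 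Hence $e_\gamma$ maps the radical into itself.

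A cleaner route is to invoke invariance directly. In characteristic zero the invariance identity is
\[
(Y(u,x)w,v)=(w,Y(e^{xL(1)}(-x^{-2})^{L(0)}u,x^{-1})v).
\]
For $u=\iota(e_\gamma)$ we have $L(1)u=0$ by \eqref{eq:e-L1}, so the adjoint of each mode of $\iota(e_\gamma)$ is $\pm$ a mode of $\iota(e_\gamma)$ itself. Both sides have integral coefficients on $V_{L,\BZ}$, so the identity holds over $\BF$. Invariance then gives stability of the radical under all generator modes at once. This is the approach I will actually use, since it avoids handling $e_\gamma$ separately.

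\textbf{The radical is proper.} We have $(\mathbf{1},\mathbf{1})=1$, so $\mathbf{1}$ does not lie in the radical and the radical is not all of $V_{L,\BF}$.

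\textbf{The radical is nonzero.} This is the main obstacle, and it is where the hypothesis $p\mid n+1$ enters. Take $\eta=\alpha_1+2\alpha_2+\cdots+n\alpha_n$. For every simple root $\alpha_i$,
\[
\langle\eta,\alpha_i\rangle\equiv 0\pmod{n+1},
\]
so $\langle \eta,\alpha\rangle\equiv0\pmod p$ for all $\alpha\in L$. I claim that $s_{\eta,1}=\eta(-1)\mathbf{1}$ lies in the radical.

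By Lemma \ref{lem:some-facts-Rad}(ii) it suffices to pair $s_{\eta,1}$ against the weight-one, $L$-degree-zero basis elements $s_{\alpha_j,1}$. Using rule (2) and \eqref{eq:rs-sr},
\[
(s_{\eta,1},s_{\alpha_j,1})=(\mathbf{1},r_{\eta,1}s_{\alpha_j,1}\mathbf{1})=-\langle\eta,\alpha_j\rangle(\mathbf{1},\mathbf{1})\equiv0 \pmod p.
\]
Hence $s_{\eta,1}\neq0$ lies in the radical, so the radical is nonzero. Combining the three parts, $\Rad(\cdot,\cdot)_{V_{L,\BF}}$ is a nontrivial ideal.
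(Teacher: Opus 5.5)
Your arguments that the radical is nonzero and proper agree with the paper. The vector $s_{\eta,1}=\eta(-1)\mathbf{1}=\sum_i i\,s_{\alpha_i,1}$ is exactly the paper's element. Your computation $(s_{\eta,1},s_{\alpha_j,1})_{V_{L,\BF}}=-\langle\eta,\alpha_j\rangle=0$ in $\BF$ replaces the citation of \cite[Theorem 5]{Mu}. (The reduction to pairing against the $s_{\alpha_j,1}$ really uses that the form pairs $L\times\BN$-degree $(\alpha,m)$ only with $(-\alpha,m)$, which is immediate from the defining rules; it does not follow from Lemma \ref{lem:some-facts-Rad}(ii).) Reducing the invariance identity for the modes $\iota(e_\gamma)_m$ from $V_{L,\BZ}$ to $V_{L,\BF}$ is also legitimate. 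It shows that $\Rad(\cdot,\cdot)_{V_{L,\BF}}$ is stable under all generator modes, hence is a left ideal.

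The gap is the step from left ideal to ideal. You note correctly that a left ideal stable under all $\D^{(k)}$ is two-sided by skew symmetry, but you never prove that the radical is $\D^{(k)}$-stable. Citing $\D^{(k)}v=v_{-k-1}\mathbf{1}$ is circular: concluding $v_{-k-1}\mathbf{1}\in\Rad(\cdot,\cdot)_{V_{L,\BF}}$ from $v\in\Rad(\cdot,\cdot)_{V_{L,\BF}}$ is precisely the right-ideal property you want.

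This is not a formal consequence of being a left ideal. For example, in the commutative vertex algebra $\BF[t]$ with $\D^{(k)}$ the Hasse derivatives, $t\BF[t]$ is a left ideal but not an ideal. Nor is it cheap here. Since $p\mid n+1=\det G_L$, the conformal vector $\omega$ has coefficients with denominator $n+1$ and cannot be reduced modulo $p$, so you cannot write $\D^{(1)}=\omega_0$. In any case $\D^{(k)}$ for $k\ge p$ is not determined by $\D^{(1)}$, because $(\D^{(1)})^p=p!\,\D^{(p)}=0$. So your claim ``it suffices to show stability under the modes $\iota(e_\gamma)_m$'' hides the actual content of the ideal statement.

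The paper closes this gap by showing that $L(1)^{(m)}$ preserves $V_{L,\BZ}$, using \eqref{eq:L-1-r-act} and Theorem \ref{th:Mu-th-1}. Hence $V_{L,\BF}$ is an $\mathcal{H}$-module vertex algebra with $(\D^{(m)}u,v)_{V_{L,\BF}}=(u,L(1)^{(m)}v)_{V_{L,\BF}}$. This gives $\D^{(m)}\Rad(\cdot,\cdot)_{V_{L,\BF}}\subset\Rad(\cdot,\cdot)_{V_{L,\BF}}$ at once, and \cite[Remark 3.9.8]{LL} then makes the left ideal an ideal.

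You could instead finish within your own framework, in four steps:
\begin{enumerate}[(a)]
\item The relation $\D^{(j)}Y(a,x)=\sum_{i=0}^{j}(\partial_x^{(i)}Y(a,x))\D^{(j-i)}$ shows, by induction on $j$, that $\sum_{j\ge0}\D^{(j)}\Rad(\cdot,\cdot)_{V_{L,\BF}}$ is stable under the modes of each $a\in\{\iota(e_{\pm\alpha_i})\}$.
\item Write an arbitrary $z$ as a combination of $a^1_{m_1}\cdots a^r_{m_r}\mathbf{1}$ and move these modes across using your adjointness identity.
\item For $w$ in the radical, this turns $(\D^{(j)}w,z)$ into a combination of terms $(\D^{(j')}w',\mathbf{1})$ with $w'$ homogeneous and in the radical.
\item These terms vanish: for $j'=0$ because $w'$ is in the radical, and for $j'\ge1$ because $\D^{(j')}w'$ has degree at least $1$ while $\mathbf{1}$ pairs only with $(V_{L,\BF})_{(0)}$.
\end{enumerate}
Either argument must be added before the ideal property is actually established.
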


\begin{proof}
It follows from Lemma \ref{lem:some-facts-Rad} that 
$\Rad (\cdot,\cdot)_{V_{L,\BF}}$ is a $V_{L,\BF}$-submodule of $V_{L,\BF}$, 
i.e., a left ideal of $V_{L,\BF}$.
Note that the bilinear form $(\cdot,\cdot)_{V_{L,\BF}}$ is 
obtained by reduction modulo $p$ from the unique invariant 
bilinear form $(\cdot,\cdot)_{V_L}$ on $V_{L}$.
Since $V_{L,\BF}$ is an $\mathcal{H}$-module vertex algebra,
we have (see \cite[Remark 4.2]{LM2})
\begin{equation*}
\big(\D^{(m)}u,v\big)_{V_{L,\BF}}=\big(L_{-1}^{(m)}u,v\big)_{V_{L,\BF}}=\big(u,L_{1}^{(m)}v\big)_{V_{L,\BF}}
\end{equation*}
for $u,v\in V_{L,\BF}$ and $m\in \BN$.
This implies that $\Rad(\cdot,\cdot)_{V_{L,\BF}}$ is $\B$-stable.
It follows from \cite[Remark 3.9.8]{LL} (see also \cite[\S 2]{JLM}) 
that $\Rad(\cdot,\cdot)_{V_{L,\BF}}$ is an ideal of $V_{L,\BF}$.

Since $\det G_{L}\equiv 0\pmod{p}$, we see that the system of 
homogeneous linear equations $G_{L}X=0$ over $\BF$ has nonzero solutions.
Note that $X=(1,2,\ldots,n)^{T}$ is such a solution.
By \cite[Theorem 5]{Mu}, we have $0\ne \sum_{i=1}^{n}is_{\alpha_i,1}\in \Rad(\cdot,\cdot)_{V_{L,\BF}}$.
Thus, $\Rad(\cdot,\cdot)_{V_{L,\BF}}\ne 0$.
On the other hand, for any $\alpha\in L$, we have $(\iota(e_{\alpha}),\iota(e_{-\alpha}))_{V_{L,\BF}}=1$, so $\iota(e_{\alpha})\notin \Rad(\cdot,\cdot)_{V_{L,\BF}}$.
Consequently, $\Rad(\cdot,\cdot)_{V_{L,\BF}}\subsetneq V_{L,\BF}$.
Therefore, $\Rad(\cdot,\cdot)_{V_{L,\BF}}$ is a nontrivial ideal of $V_{L,\BF}$.
\end{proof}

Therefore, the bilinear form $(\cdot,\cdot)_{V_{L,\BF}}$ induces 
a bilinear form $(\cdot,\cdot)_{\bar V_{L,\BF}}$ on 
$V_{L,\BF}/\Rad(\cdot,\cdot)_{V_{L,\BF}}$, defined by
\begin{equation*}
	(u+\Rad(\cdot,\cdot)_{V_{L,\BF}}, v+\Rad(\cdot,\cdot)_{V_{L,\BF}})_{\bar V_{L,\BF}} = (u,v)_{V_{L,\BF}}
\end{equation*}
for $u,v\in V_{L,\BF}$.
It is clear that $(\cdot,\cdot)_{\bar V_{L,\BF}}$ is nondegenerate 
on $V_{L,\BF}/\Rad(\cdot,\cdot)_{V_{L,\BF}}$.
By Lemmas \ref{lem:some-facts-Rad} and \ref{lem:Rad}, 
$V_{L,\BF}/\Rad(\cdot,\cdot)_{V_{L,\BF}}$ is an $L\times \BN$-graded 
vertex algebra, and hence an $L\times \BN$-graded 
$V_{L,\BF}/\Rad(\cdot,\cdot)_{V_{L,\BF}}$-module.
Thus, using the same argument as in \cite[Proposition 10]{Mu} 
(see also \cite[Proposition 3.6]{ZM}), we obtain the following result.

\begin{lemma}[\cite{Mu}]\label{lem:simp-quot-latt}
The quotient $V_{L,\BF}/\Rad(\cdot,\cdot)_{V_{L,\BF}}$ is an 
irreducible module over itself and a simple vertex algebra.
\end{lemma}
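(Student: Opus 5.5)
Write $R:=\Rad(\cdot,\cdot)_{V_{L,\BF}}$ and $\bar V:=V_{L,\BF}/R$. We show that every nonzero submodule $W$ of $\bar V$ (viewed as a module over itself) contains $\bar{\mathbf{1}}$. This gives irreducibility. Simplicity then follows, since any ideal of a vertex algebra is a submodule of the adjoint module.

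\textbf{Reduction to a homogeneous vector.} By Lemmas \ref{lem:some-facts-Rad} and \ref{lem:Rad}, $\bar V$ is $L\times\BN$-graded. Since $\bar V$ is a module over itself, $W$ is stable under the operators induced by $s_{\alpha,m}$ and $r_{\alpha,m}$. Indeed, these are coefficients of vertex operators: $s_{\alpha,m}$ comes from $Y(\iota(e_\alpha),x)$ composed with the $e_{\pm\alpha}$-translations, and $r_{\alpha,m}$ comes from $Y(s_{\alpha,m},x)$ via \eqref{eq:Y-s-alpha-n}. Precisely, the components of $Y(\iota(e_{-\alpha}),x_2)Y(\iota(e_{\alpha}),x_1)$ yield $E^{-}(\alpha,x_2)E^-(-\alpha,x_1)E^+(\alpha,x_2)E^+(-\alpha,x_1)$ up to a monomial factor and a sign. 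Specializing appropriately and using the $L$-grading, this gives the operators $r_{\alpha,m}$ and $s_{\alpha,m}$ on $W$. In particular, $W$ is stable under the $L$-grading shifts $\iota(e_\beta)_{k}$, and it is $L\times\BN$-graded, because the grading operators (the $L(0)$-eigenvalue via $\binom{-2\deg}{k}$ and the $\fh$-weights via the $\binom{\alpha}{j}$ appearing in \eqref{eq:Y-s-alpha-n}) act polynomially. So we may pick a nonzero homogeneous $\bar w\in W$ of lattice degree $\beta$. Applying $\iota(e_{-\beta})_{k}$ for the suitable $k$ shifts it to lattice degree $0$, giving $\bar u$ with $u\in M(1)_\BF\otimes\iota(e_0)$. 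The key point is that this shift does not land in $R$: by Lemma \ref{lem:some-facts-Rad}(iii), $u'\otimes\iota(e_{\beta})\notin R$ iff $u'\notin R$. Here $\iota(e_{-\beta})_{k}(u'\otimes\iota(e_\beta))$ equals $\pm u'\otimes \mathbf 1$ plus terms whose $M(1)$-part has degree less than that of $u'$, so we choose $k$ extracting exactly the top term.

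\textbf{Main step.} Given $u\in M(1)_\BF\setminus R$, homogeneous of degree $m>0$, we produce from $u$, using the $r_{\alpha,j}$'s, a nonzero multiple of $\mathbf{1}$ modulo $R$. Since $u\notin R$, there is $v\in M(1)_\BF$ with $(u,v)\ne0$. Expand $v$ in the basis $\mathcal S$ and choose a monomial $s_{\alpha^1,m_1}\cdots s_{\alpha^k,m_k}\mathbf 1$ with $(u,s_{\alpha^1,m_1}\cdots s_{\alpha^k,m_k}\mathbf 1)\ne0$. By the adjointness property (2) of the form, this equals $(r_{\alpha^k,m_k}\cdots r_{\alpha^1,m_1}u,\mathbf 1)$. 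The vector $r_{\alpha^k,m_k}\cdots r_{\alpha^1,m_1}u$ has degree $0$, hence equals $c\mathbf 1$ with $c\ne0$. It lies in $W$ modulo $R$ by the stability above, so $\bar{\mathbf{1}}\in W$, as required.

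\textbf{Expected obstacle.} The delicate point is the stability claim: checking that $W$ is preserved by each $r_{\alpha,m}$ and $s_{\alpha,m}$ individually, rather than only by vertex-operator coefficients, and that it is preserved by the grading projections. In characteristic $p$ we cannot divide to isolate the terms. I would argue as in the proof of Proposition \ref{prop:Mu-Heisen-gen}: induct on $n$ using \eqref{eq:Y-s-alpha-n}, the Lucas-type nonvanishing $\binom{m}{p^t}\not\equiv0$, and Lemma \ref{lem:DG-s--alpha}, exactly as in \cite[Proposition 10]{Mu}. For the grading, I would use that the $\binom{\alpha}{j}$-operators extracted from \eqref{eq:Y-s-alpha-n} separate lattice weights.
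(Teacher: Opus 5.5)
Your outline is the argument the paper imports from \cite[Proposition 10]{Mu}: use the nondegenerate induced form and the adjointness of $s_{\alpha,m}$ and $r_{\alpha,m}$ to push a nonzero vector of a submodule $W$ down to $\bar{\mathbf 1}$. However, two of your intermediate steps fail as written, and the key stability step needs an adjustment.

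\emph{The lattice shift.} We have $Y(\iota(e_{-\beta}),x)(u'\otimes\iota(e_\beta))=\epsilon(-\beta,\beta)\,x^{-\langle\beta,\beta\rangle}\sum_{j,m}s_{-\beta,j}r_{-\beta,m}u'\,x^{j-m}$. So for homogeneous $u'$, every coefficient is homogeneous of a single degree. The coefficient containing $u'$ is $\pm\sum_{j\ge0}s_{-\beta,j}r_{-\beta,j}u'$, whose other terms have the same degree as $u'$. There is no top term to isolate, and this operator can kill non-radical vectors: for $u'=\beta(-1)\mathbf{1}$ it gives $(1-\langle\beta,\beta\rangle)\beta(-1)\mathbf{1}$, which vanishes when $\langle\beta,\beta\rangle\equiv 1\pmod p$.

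\emph{The $\BN$-grading of $W$.} The operators $L_0^{(k)}=\binom{-2\deg}{k}$ come from the $\mathcal H$-module structure, not from modes of vertex operators. A left ideal is a priori stable only under modes. Here the conformal vector, which involves $G_L^{-1}$, is not even available in $V_{L,\BF}$, since $p\mid\det G_L$.

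Neither step is needed. Fix a lattice degree $\beta$. Let $d$ be the largest degree at which $u'$ has a component outside $R$; this makes sense by Lemma \ref{lem:some-facts-Rad}(ii). Pick a monomial $s_{\alpha^1,m_1}\cdots s_{\alpha^k,m_k}\mathbf 1$ of degree $d$ pairing nontrivially with that component, and apply $r_{\alpha^k,m_k}\cdots r_{\alpha^1,m_1}$ to $u'\otimes\iota(e_\beta)$. Lower components go to $0$, and higher ones stay in $R$ by Lemma \ref{lem:some-facts-Rad}(i). You get $c\,\iota(e_\beta)$ modulo $R$ with $c\ne0$, and then $\iota(e_{-\beta})_{\langle\beta,\beta\rangle-1}\iota(e_\beta)=\epsilon(-\beta,\beta)\mathbf 1$.

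\emph{Stability.} What remains is stability of $W$ under $r_{\alpha,m}$ and under the lattice projections. Your paragraph on specializing $Y(\iota(e_{-\alpha}),x_2)Y(\iota(e_\alpha),x_1)$ is circular, since it already uses the $L$-grading of $W$. The induction you propose at the end does work, with two changes from Proposition \ref{prop:Mu-Heisen-gen}.

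First, because $W$ is a submodule, $Y(s_{\alpha,n},x)W\subset W((x))$ for every $n$. So the $p^t$-step and Lemma \ref{lem:DG-s--alpha} are unnecessary.

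Second, off $M(1)_\BF$ the first line of \eqref{eq:Y-s-alpha-n} contains the extra terms $\binom{\alpha}{j}x^{-j}A_{n-j-i}(x)B_i(x)$. Here $A_q(x)$ and $B_i(x)$ are the operators in \eqref{eq:Mu-9} and \eqref{eq:Mu-10}, and $\binom{\alpha}{j}$ acts by $\binom{\langle\alpha,\gamma\rangle}{j}$ on lattice degree $\gamma$. Assume $W$ is stable under $\binom{\alpha}{q}$ and under the coefficients of $A_q(x)$ and $B_q(x)$ for all $q<n$. Then $\bigl(A_n(x)+\binom{\alpha}{n}x^{-n}+B_n(x)\bigr)W\subset W((x))$. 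The three pieces involve only powers $\ge0$, exactly $-n$, and $\le -n-1$, respectively, so each preserves $W$ separately. This gives:
\begin{itemize}
\item $s_{\alpha,n}$, as the $x^0$-coefficient of $A_n(x)$;
\item the operators $\binom{\alpha}{n}$, whose values $\binom{\langle\alpha,\gamma\rangle}{p^t}$ modulo $p$ are the $p$-adic digits of $\langle\alpha,\gamma\rangle$, so polynomials in them project onto lattice components;
\item $r_{\alpha,m}$, since the $x^{-m-n}$-coefficient of $B_n(x)$ is $\binom{-m}{n}r_{\alpha,m}$ plus products of lower $r$'s, and $\binom{-m}{n}\not\equiv 0\pmod p$ for $n=p^T-m$.
\end{itemize}
With these repairs your argument goes through.
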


In what follows, we determine the maximal graded ideal $\Rad(\cdot,\cdot)_{V_{L,\BF}}$ of $V_{L,\BF}$.
We shall use Lucas' theorem, which states that for any $m,k\in \BN$ with
\begin{equation*}
m=m_0+m_1p+\cdots+m_tp^t,\;\;k=k_0+k_1p+\cdots+k_tp^t,
\end{equation*}
where $t\in \BN$, $0\le m_i,k_i\le p-1$ for $0\le i\le t$, we have
\begin{equation}
\binom{m}{k}\equiv \prod_{i=0}^{t}\binom{m_i}{k_i}\pmod{p}.
\end{equation}
Since $n+1\equiv 0\pmod{p}$, 
we write
\begin{equation*}
n+1=(n+1)_rp^r+(n+1)_{r+1}p^{r+1}+\cdots+(n+1)_tp^t,
\end{equation*}
where $r\in \BZ_+$ is minimal such that $(n+1)_r\ne 0$.
In particular, $n+1=ap^r$ with $\gcd(a,p)=1$, 
where $a=(n+1)_{r}+(n+1)_{r+1}p+\cdots+(n+1)_{t}p^{t-r}$.
Furthermore, set 
\begin{equation}\label{eq:eta}
\eta:=\alpha_1+2\alpha_2+\cdots+n\alpha_n\in L.
\end{equation}
A direct calculation gives
\begin{equation}\label{eq:eta-alpha_i}
    \langle\eta,\alpha_i\rangle=0 \text{ for }1\le i\le n-1 \text{ and }\langle\eta,\alpha_n\rangle=n+1.
\end{equation}

\begin{lemma}\label{lem:4.4}
Let $I_{L}$ be the vector space spanned by elements of the form
\begin{equation}\label{eq:cha-IL}
s_{\alpha^1,m_1}\ldots s_{\alpha^k,m_k}s_{\eta,m}\iota(e_{\alpha}),
\end{equation}
where $\alpha\in L$, $\alpha^i\in \Pi$, $m_i\in \BZ_+$, $k\in \BN$, and $m\in \BZ_+$ with $p^r\nmid m$.
Then $I_{L}\subset \Rad(\cdot,\cdot)_{V_{L,\BF}}$.
\end{lemma}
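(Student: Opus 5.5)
The plan is to prove something stronger than needed: for $m\in\BZ_+$ with $p^r\nmid m$, the operator $r_{\eta,m}$ acts as zero on all of $V_{L,\BF}$. Given this, the defining property (2) of $(\cdot,\cdot)_{V_{L,\BF}}$ yields
\begin{equation*}
\big(s_{\eta,m}u,v\big)_{V_{L,\BF}}=\big(u,r_{\eta,m}v\big)_{V_{L,\BF}}=0
\end{equation*}
for all $u,v\in V_{L,\BF}$. In particular $s_{\eta,m}\iota(e_{\alpha})\in \Rad(\cdot,\cdot)_{V_{L,\BF}}$ for every $\alpha\in L$. By Lemma \ref{lem:some-facts-Rad}(i), $\Rad(\cdot,\cdot)_{V_{L,\BF}}$ is stable under every $s_{\alpha^i,m_i}$. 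Hence every spanning vector \eqref{eq:cha-IL} of $I_L$ lies in the radical, and so $I_L\subset\Rad(\cdot,\cdot)_{V_{L,\BF}}$.

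To show $r_{\eta,m}=0$, I work with the basis $\mathcal{S}_L$, taking $\{\gamma^1,\ldots,\gamma^n\}=\Pi$. Consider a basis vector $s_{\beta^1,m_1}\cdots s_{\beta^k,m_k}\iota(e_{\gamma})$ with $\beta^j\in\Pi$ and $\gamma\in L$. Apply \eqref{eq:E+E-E-E+} repeatedly to move $E^{+}(-\eta,x_0)$ to the right past $E^{-}(-\beta^1,x_1)\cdots E^{-}(-\beta^k,x_k)$. This identity holds over $\BZ$ (the exponents $\langle\eta,\beta^j\rangle$ are integers), so it descends to $V_{L,\BF}$, as does \eqref{eq:rs-sr}. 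We have $E^{+}(-\eta,x_0)\iota(e_\gamma)=\iota(e_\gamma)$, since $h(m)\iota(e_\gamma)=0$ for $m>0$. Therefore
\begin{equation*}
E^{+}(-\eta,x_0)\prod_{j=1}^{k}E^{-}(-\beta^j,x_j)\iota(e_{\gamma})
=\prod_{j=1}^{k}\Big(1-\frac{x_j}{x_0}\Big)^{\langle\eta,\beta^j\rangle}\prod_{j=1}^{k}E^{-}(-\beta^j,x_j)\iota(e_{\gamma}).
\end{equation*}

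By \eqref{eq:eta-alpha_i}, each exponent $\langle\eta,\beta^j\rangle$ is either $0$ or $n+1=ap^r$. Over $\BF$ we have $(1-y)^{ap^r}=(1-y^{p^r})^a$. So the right-hand side is a power series in $x_0^{-p^r}$ only. Comparing coefficients of $x_0^{-m}x_1^{m_1}\cdots x_k^{m_k}$, the coefficient $r_{\eta,m}s_{\beta^1,m_1}\cdots s_{\beta^k,m_k}\iota(e_\gamma)$ vanishes whenever $p^r\nmid m$. Since such vectors form a basis of $V_{L,\BF}$, this proves $r_{\eta,m}=0$.

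The only delicate point is justifying the reduction mod $p$. One has to check that \eqref{eq:E+E-E-E+}, read coefficientwise as \eqref{eq:rs-sr}, is an identity of operators preserving $V_{L,\BZ}$ with integer binomial coefficients, so that it holds verbatim in $V_{L,\BF}$. One also needs the elementary congruence $\binom{ap^r}{i}\equiv 0 \pmod p$ unless $p^r\mid i$, which follows from Lucas' theorem. I expect this bookkeeping to be routine.
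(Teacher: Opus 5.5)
Your proof is correct and uses the same computation as the paper: commute $r_{\eta,m}$ past the $s_{\alpha^i,m_i}$ via \eqref{eq:rs-sr} (equivalently \eqref{eq:E+E-E-E+}), then use $\binom{ap^r}{j}\equiv 0 \pmod p$ unless $p^r\mid j$. The only difference is packaging. You show that $r_{\eta,m}$ is the zero operator on $V_{L,\BF}$ whenever $p^r\nmid m$, which gives $s_{\eta,m}V_{L,\BF}\subset\Rad(\cdot,\cdot)_{V_{L,\BF}}$ at once. This avoids the paper's case-by-case evaluation of $(\mathbf{1},r_{\eta,m}v)_{V_{L,\BF}}$ (including the split into $l<k$ and $l=k$) and its appeal to Lemma \ref{lem:some-facts-Rad}(iii).
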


\begin{proof}
Let $m\in \BZ_+$ with $p^r\nmid m$.
We first show that $s_{\eta,m}\in \Rad(\cdot,\cdot)_{V_{L,\BF}}$.
Let $s_{\alpha^1,m_1}\cdots s_{\alpha^k,m_k}\iota(e_{\alpha})$ be an element of $V_{L,\BF}$, 
where $\alpha^i\in \Pi$, $m_i\in \BZ_+$, $k\in \BN$, and $\alpha\in L$.
If $\alpha\ne 0$, or if $\alpha^i\ne \alpha_n$ for all $1\le i\le k$, 
or if $m_1+\cdots+m_k\ne m$, then by \eqref{eq:rs-sr} and \eqref{eq:eta-alpha_i} we have
\begin{equation*}
(s_{\eta,m},s_{\alpha^1,m_1}\cdots s_{\alpha^k,m_k}\iota(e_{\alpha}))_{V_{L,\BF}}=(\mathbf{1},r_{\eta,m}s_{\alpha^1,m_1}\cdots s_{\alpha^k,m_k}\iota(e_{\alpha}))_{V_{L,\BF}}=0.
\end{equation*}
Thus, it remains to consider the case where $\alpha=0$, 
$m_1+\cdots+m_k=m$, and $\alpha^i=\alpha_n$ for at least one $i$. 
Without loss of generality, assume that the first $l$ of 
the $\alpha^i$ equal $\alpha_n$, with $1\le l\le k$.
By \eqref{eq:rs-sr}, a direct calculation shows that
\begin{align}\label{eq:3.25}
&\?(s_{\eta,m}, s_{\alpha^1,m_1}\cdots s_{\alpha^k,m_k})_{V_{L,\BF}}\nonumber\\
&=(\mathbf{1}, r_{\eta,m}s_{\alpha^1,m_1}\cdots s_{\alpha^k,m_k})_{V_{L,\BF}}\nonumber\\
&=(\mathbf{1},\sum_{j_1,\ldots,j_k\in \BN}(-1)^{j_1+\cdots+j_k}\bigg(\prod_{i=1}^{k}\binom{\langle\eta,\alpha^i\rangle}{j_i}\bigg)s_{\alpha^1,m_1-j_1}\cdots s_{\alpha^k,m_k-j_k}r_{\eta,m-(j_1+\cdots+j_k)})_{V_{L,\BF}}\nonumber\\
&=(\mathbf{1},\sum_{\substack{j_1,\ldots,j_k\in \BN,\\
j_1+\cdots+j_k=m}}(-1)^{m}\binom{n+1}{j_1}\cdots\binom{n+1}{j_{l}}\binom{0}{j_{l+1}}\cdots \binom{0}{j_k}s_{\alpha^1,m_1-j_1} \cdots s_{\alpha^k,m_k-j_{k}})_{V_{L,\BF}}.
\end{align}
If $l<k$, then $j_{l+1}=\cdots=j_k=0$ and $j_1+\cdots+j_k=m$ 
force $j_1+\cdots+j_l=m$. 
Combined with $m_1+\cdots+m_l<m$, this implies that there 
exists an $i$ with $1\le i\le l$ such that $m_i-j_i<0$.
Since $s_{\alpha,t}=0$ for $t<0$, 
we obtain $(s_{\eta,m}, s_{\alpha^1,m_1}\cdots s_{\alpha^k,m_k})_{V_{L,\BF}}=0$.
If $l=k$, then by \eqref{eq:3.25} and $n+1=ap^r$, we have
\begin{equation*}
(s_{\eta,m}, s_{\alpha^1,m_1}\cdots s_{\alpha^k,m_k})_{V_{L,\BF}}=
(\mathbf{1},\sum_{\substack{j_1,\ldots,j_k\in \BN,\\
j_1+\cdots+j_k=m}}(-1)^{m}\bigg(\prod_{i=1}^{k}\binom{ap^r}{j_i}\bigg)s_{\alpha^1,m_1-j_1}\cdots s_{\alpha^k,m_k-j_k})_{V_{L,\BF}}.
\end{equation*}
Since $p^r\nmid m$, we see that there exists an $i$ with $1\le i\le k$ such that $p^r\nmid j_i$.
Then by Lucas' theorem, $\prod_{i=1}^{k}\binom{ap^r}{j_i}\equiv 0\pmod{p}$, 
and hence $(s_{\eta,m}, s_{\alpha^1,m_1}\cdots s_{\alpha^k,m_k})_{V_{L,\BF}}=0$.
Therefore, $s_{\eta,m}\in \Rad(\cdot,\cdot)_{V_{L,\BF}}$.

By Lemma \ref{lem:some-facts-Rad}(i), 
we have $s_{\alpha^1,m_1}\ldots s_{\alpha^k,m_k}s_{\eta,m}\in \Rad(\cdot,\cdot)_{V_{L,\BF}}$
for $\alpha^i\in \Pi$, $m_i\in \BZ_+$, 
$k\in \BN$, and $m\in \BZ_+$ with $p^r\nmid m$.
Then by Lemma \ref{lem:some-facts-Rad}(iii), we have
\begin{equation*}
s_{\alpha^1,m_1}\ldots s_{\alpha^k,m_k}s_{\eta,m}\iota(e_{\alpha})\in \Rad(\cdot,\cdot)_{V_{L,\BF}}
\end{equation*}
for  $\alpha\in L$.
Thus, $I_{L}\subset  \Rad(\cdot,\cdot)_{V_{L,\BF}}$.
\end{proof}

Furthermore, we have the following result.
\begin{lemma}\label{lem:I-L-ideal}
The subspace $I_{L}$ is the $V_{L,\BF}$-submodule generated 
by $\{s_{\eta,m}\mid m\in \BZ_+ \text{ with } p^r\nmid m\}$.
Moreover, $I_{L}$ is an ideal of $V_{L,\BF}$.
\end{lemma}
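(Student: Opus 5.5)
The plan is to show directly that $I_L$ is a $V_{L,\BF}$-submodule containing the generators $s_{\eta,m}$ ($p^r\nmid m$), and conversely that every spanning vector \eqref{eq:cha-IL} lies in the submodule $W$ generated by these elements. Then I would upgrade "submodule" to "ideal" by showing $\B$-stability, exactly as in the proof of Lemma \ref{lem:Rad}, and invoking \cite[Remark 3.9.8]{LL}.

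The key arithmetic input is \eqref{eq:eta-alpha_i}. For every $\beta\in L$ it gives $\langle\beta,\eta\rangle\in(n+1)\BZ=ap^r\BZ$. By \eqref{eq:rs-sr},
\begin{equation*}
r_{\beta,j}s_{\eta,m}=\sum_{i\in\BN}(-1)^i\binom{\langle\beta,\eta\rangle}{i}s_{\eta,m-i}r_{\beta,j-i}.
\end{equation*}
By Lucas' theorem, $\binom{\langle\beta,\eta\rangle}{i}\equiv 0\pmod p$ unless $p^r\mid i$. When $p^r\mid i$ and $p^r\nmid m$, we get $p^r\nmid m-i$, and moreover $m-i\neq 0$. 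The case $\langle\beta,\eta\rangle<0$ is handled the same way after writing the binomial via the upper-negation identity. Hence commuting any $r_{\beta,j}$ past $s_{\eta,m}$ only produces terms $s_{\eta,m'}$ with $p^r\nmid m'$, $m'>0$ (or zero).

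\textbf{Step 1: $I_L$ is a submodule.} By Theorem \ref{th:Mu-th-1} and the standard fact that a subspace stable under all components of the fields of a generating set is a submodule (via associativity/iterates, \cite[Proposition 3.9.3]{LL}), it suffices to check stability under $Y(\iota(e_{\beta}),x)$ for $\beta\in\pm\Pi$. This operator equals $E^-(-\beta,x)E^+(-\beta,x)e_\beta x^\beta$, and I would check each factor separately.
\begin{itemize}
\item The factors $e_\beta x^\beta$ only shift $\iota(e_\alpha)$ and multiply by scalars.
\item $E^+(-\beta,x)$ is a series in the $r_{\beta,j}$. These kill $\iota(e_\alpha)$, and commuting them past the $s_{\alpha^i,m_i}$ yields products of $s$'s of the same shape. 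Commuting them past $s_{\eta,m}$ keeps the $p^r\nmid$ condition by the computation above.
\item $E^-(-\beta,x)$ multiplies by $s_{\beta,j}$. Since $\beta\in\pm\Pi$, \eqref{eq:E-ab=E-aE-b} and Lemma \ref{lem:DG-s--alpha} show $s_{\beta,j}$ is a $\BZ$-polynomial in the $s_{\alpha_i,k}$.
\end{itemize}
Since the $s$'s commute, the resulting vectors are again of the form \eqref{eq:cha-IL}. Therefore $W\subset I_L$.

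\textbf{Step 2: $I_L\subset W$.} I would first show $s_{\alpha_i,k}W\subset W$ for all $k$. I would run the induction of Proposition \ref{prop:Mu-Heisen-gen} verbatim with the subalgebra $U$ replaced by the submodule $W$: \eqref{eq:Y-s-alpha-n} and the $W$-stability of $Y(s_{\alpha_i,p^t},x)$ give \eqref{eq:Mu-9}--\eqref{eq:Mu-11} with $U$ replaced by $W$. Next, taking the coefficient of the lowest power of $x$ in $Y(\iota(e_\alpha),x)s_{\eta,m}$ gives $s_{\eta,m}\iota(e_\alpha)$ modulo terms $s_{\eta,m'}\iota(e_\alpha)$ with $m'<m$ and $p^r\nmid m'$, all multiplied by $s$'s. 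An induction on $m$ then puts every $s_{\eta,m}\iota(e_\alpha)$ in $W$. The general $\alpha$ case follows by iterating over a decomposition of $\alpha$ into $\pm\Pi$, and then applying the $s_{\alpha^i,m_i}$ finishes.

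\textbf{Step 3: ideal, the main obstacle.} It remains to show $\D^{(k)}I_L\subset I_L$. I would use $e^{xD}\iota(e_\alpha)=E^-(-\alpha,x)\iota(e_\alpha)$ together with
\begin{equation*}
e^{xD}E^-(-\gamma,x_0)e^{-xD}=E^-(-\gamma,x_0+x)E^-(\gamma,x).
\end{equation*}
The terms from $\iota(e_\alpha)$ and from the $s_{\alpha^i}$ are harmless. The delicate part is the $x_0^m$-coefficient of
\begin{equation*}
E^-(-\eta,x_0+x)E^-(\eta,x)=\exp\Bigl(\sum_{j}\tfrac{\eta(-j)}{j}\bigl((x_0+x)^j-x^j\bigr)\Bigr),
\end{equation*}
which must be shown to lie in the span of $s_{\eta,m'}$ ($p^r\nmid m'$) times $M(1)_\BF$. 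If this direct computation becomes unwieldy, the fallback is to avoid it: Steps 1--2 give a left ideal, and then I would use $\D^{(k)}v=v_{-k-1}\mathbf{1}$ together with the $\D$-equivariance of the field $Y(\iota(e_\beta),x)$ to reduce to $\D^{(k)}s_{\eta,m}$, and check that case directly via Lucas' theorem.
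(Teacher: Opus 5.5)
Your plan has the same three ingredients as the paper's proof. First, $W\subset I_L$ by moving $E^{\pm}(-\beta,x)$ past $s_{\eta,m}$ and discarding the binomials $\binom{\langle\beta,\eta\rangle}{t}$ with $p^r\nmid t$ via Lucas. Second, $I_L\subset W$ by coefficient extraction and induction on $m$. Third, the ideal property from $\B$-stability. In Step 1 you check invariance of $I_L$ under single generating fields $Y(\iota(e_\beta),x)$, whereas the paper evaluates $Y(\iota(e_{\gamma_1}),x_1)\cdots Y(\iota(e_{\gamma_k}),x_k)s_{\eta,m}$; this is equivalent, and it shows directly that $I_L$ is a submodule. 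Step 3 is not delicate. The $x_0^m$-coefficient of $E^-(-\eta,x_0+x)E^-(\eta,x)$ is $\sum_j\binom{j}{m}s_{\eta,j}x^{j-m}E^-(\eta,x)$, and $\binom{j}{m}\equiv 0\pmod{p}$ whenever $p^r\mid j$, because $p^r\nmid m$. This is exactly \eqref{eq:D-k-s-delta-m}, and your fallback is precisely the paper's argument.

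The real divergence is Step 2. The paper never needs $W$ to be closed under multiplication by $s_{\alpha_i,k}$. It divides by $\prod_{i<j}(x_i-x_j)^{\langle\gamma_i,\gamma_j\rangle}$ and reads off the coefficient of $x_1^{m_1}\cdots x_k^{m_k}$, so the prefactors $s_{\gamma_1,m_1}\cdots s_{\gamma_k,m_k}$ come straight from the $E^-$'s (see \eqref{eq:3.29}--\eqref{eq:sum}). Your more modular route also works, but two details need repair.

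\emph{The adaptation of Proposition \ref{prop:Mu-Heisen-gen} is not verbatim.} On $V_{L,\BF}$, formula \eqref{eq:Y-s-alpha-n} contains factors $\binom{\alpha}{j}x^{-j}$; these vanish on $M(1)_\BF$ but not on $W$. They are harmless because $W$ is $L$-graded, so they act by scalars on its homogeneous pieces. Moreover, $Y(s_{\alpha,n},x)W\subset W((x))$ already holds for every $n$, so a plain induction on $n$ suffices: separate the part with nonnegative powers of $x$ from the part with powers at most $-n-1$. No $p^t$ bootstrapping is needed.

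\emph{You must take the coefficient of $x^{0}$, not of the lowest power of $x$.} Here
$Y(\iota(e_\alpha),x)s_{\eta,m}=\sum_{j,t}(-1)^t\binom{\langle\alpha,\eta\rangle}{t}s_{\alpha,j}s_{\eta,m-t}\iota(e_\alpha)x^{j-t}$.
For $\alpha=\alpha_n$ and $m>p^r$, the term $t=p^r$ survives, since $\binom{ap^r}{p^r}\equiv a\not\equiv 0\pmod{p}$. So the lowest coefficient is a multiple of $s_{\eta,m-t}\iota(e_{\alpha_n})$ for some $t\ge p^r$ and contains no $s_{\eta,m}$ at all. The $x^0$-coefficient is $s_{\eta,m}\iota(e_\alpha)$ plus terms $s_{\alpha,t}s_{\eta,m-t}\iota(e_\alpha)$ with $t>0$ and $p^r\mid t$. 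That is exactly what your induction on $m$ needs.
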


\begin{proof}
Let $U_{L}$ be the $V_{L,\BF}$-module generated by 
$\{s_{\eta,m}\mid m\in \BZ_+ \text{ with } p^r\nmid m\}$.
By \cite[Propositions 4.5.6, 3.9.3]{LL} and Theorem \ref{th:Mu-th-1}, 
$U_{L}$ is spanned by the coefficients of products of the form
\begin{equation*}
Y(\iota(e_{\gamma_1}),x_1)\cdots Y(\iota(e_{\gamma_k}),x_k)s_{\eta,m},
\end{equation*}
where $\gamma_1,\ldots,\gamma_k\in \pm \Pi$, $k\in \BN$, 
and $m\in \BZ_+$ with $p^r\nmid m$.
By \eqref{eq:def-E+E-}, \eqref{eq:rs-sr}, 
and $r_{\gamma_i,q}\mathbf{1}=0$ for $q\in \BZ_+$, we have
\begin{align}\label{eq:E+-s-eta}
E^{+}(-\gamma_i,x)s_{\eta,m}&=\sum_{t\in \BN}r_{\gamma_i,t}s_{\eta,m}x^{-t}=\sum_{t\in \BN}\sum_{j\in \BN}(-1)^{j}\binom{\langle \gamma_i,\eta\rangle}{j}s_{\eta,m-j}r_{\gamma_i,t-j}\mathbf{1}x^{-t}\nonumber\\
&=\sum_{t\in \BN}(-1)^{t}\binom{\langle \gamma_i,\eta\rangle}{t}s_{\eta,m-t}x^{-t}.
\end{align}
Now using \eqref{eq:ver-ope}, \eqref{eq:E+E-E-E+}, \eqref{eq:E+-s-eta}, 
and $x^{\gamma_i}e_{\gamma_j}=x^{\langle\gamma_i,\gamma_j\rangle}e_{\gamma_j}x^{\gamma_i}$ 
(see \cite[Proposition 6.4.5]{LL}), we obtain
\begin{align*}
&\?Y(\iota(e_{\gamma_1}),x_1)\cdots Y(\iota(e_{\gamma_k}),x_k)s_{\eta,m}\nonumber\\
&=(E^{-}(-\gamma_1,x_1)E^{+}(-\gamma_1,x_1)e_{\gamma_1}x_1^{\gamma_1})\cdots (E^{-}(-\gamma_k,x_k)E^{+}(-\gamma_k,x_k)e_{\gamma_k}x_k^{\gamma_k})s_{\eta,m}\\
&=\Bigl(\prod_{i=1}^{k}\epsilon(\gamma_i,\sum_{i<j\le k}\gamma_j)\Bigr)\Bigl(\prod_{1\le i<j\le k}(x_i-x_j)^{\langle \gamma_i,\gamma_j \rangle}\Bigr) E^{-}(-\gamma_1,x_1)\cdots E^{-}(-\gamma_k,x_k) \\
&\?\times E^{+}(-\gamma_1,x_1)\cdots E^{+}(-\gamma_k,x_k)s_{\eta,m}\iota(e_{\gamma_1+\cdots+\gamma_k}) \\
&=\Bigl(\prod_{i=1}^{k}\epsilon(\gamma_i,\sum_{i<j\le k}\gamma_j)\Bigr)\Bigl(\prod_{1\le i<j\le k}(x_i-x_j)^{\langle \gamma_i,\gamma_j \rangle}\Bigr) E^{-}(-\gamma_1,x_1)\cdots E^{-}(-\gamma_k,x_k) \\
&\?\times\Bigl(\sum_{t_1,\ldots,t_k\in \BN}(-1)^{t_1+\cdots+t_k}\binom{\langle\gamma_1,\eta\rangle}{t_1}\cdots\binom{\langle\gamma_k,\eta\rangle}{t_k} s_{\eta,m-t_1-\cdots-t_k}\iota(e_{\gamma_1+\cdots+\gamma_k})x_1^{-t_1}\cdots x_{k}^{-t_{k}}\Bigr).
\end{align*}
Suppose that $\pm\alpha_n$ occurs $l$ times among $\gamma_1,\ldots,\gamma_k$.
Without loss of generality, assume that the first $l$ are $\pm \alpha_n$.
By \eqref{eq:eta-alpha_i}, we have
\begin{align}\label{eq:0-l-k}
&\? Y(\iota(e_{\gamma_1}),x_1)\cdots Y(\iota(e_{\gamma_k}),x_k)s_{\eta,m}\nonumber\\
&=\Big(\prod_{i=1}^{k}\epsilon(\gamma_i,\sum_{i<j\le k}\gamma_j)\Big)\Big(\prod_{1\le i<j\le k}(x_i-x_j)^{\langle \gamma_i,\gamma_j \rangle}\Big) E^{-}(-\gamma_1,x_1)\cdots E^{-}(-\gamma_k,x_k)\nonumber\\
&\?\times\bigg(\sum_{t_1,\ldots,t_l\in \BN}(-1)^{t_1+\cdots+t_l}\binom{\pm ap^r}{t_1}\cdots\binom{\pm ap^r}{t_l} s_{\eta,m-t_1-\cdots-t_l}\iota(e_{\gamma_1+\cdots+\gamma_k})x_1^{-t_1}\cdots x_{l}^{-t_{l}}\bigg)\nonumber\\
&=\Big(\prod_{i=1}^{k}\epsilon(\gamma_i,\sum_{i<j\le k}\gamma_j)\Big)\Big(\prod_{1\le i<j\le k}(x_i-x_j)^{\langle \gamma_i,\gamma_j \rangle}\Big) E^{-}(-\gamma_1,x_1)\cdots E^{-}(-\gamma_k,x_k)\nonumber\\
&\?\times\bigg(\sum_{\substack{t_1,\ldots,t_l\in \BN,\\ p^r\mid t_i\text{ for }1\le i\le l}}(-1)^{t_1+\cdots+t_l}\bigg(\prod_{i=1}^{l}\binom{\pm ap^r}{t_i}\bigg) s_{\eta,m-t_1-\cdots-t_l}\iota(e_{\gamma_1+\cdots+\gamma_k})x_1^{-t_1}\cdots x_{l}^{-t_{l}}\bigg),
\end{align}
as $(-1)^{t}\binom{\pm ap^r}{t}\equiv 0\pmod{p}$ for $p^r\nmid t$.
Here, $p^r\nmid (m-t_1-\cdots-t_l)$ since $p^r\nmid m$ and 
$p^r\mid t_i$ for all $i$.
Recall from \cite[Proposition 6.4.12]{LL} and \cite[(5.2.14)]{FLM} 
that $\epsilon(\alpha,\gamma)=\pm 1$ for $\alpha,\gamma\in L$.
Then combining Lemma \ref{lem:DG-s--alpha} with \eqref{eq:0-l-k}, 
we obtain $U_{L}\subset I_{L}$.

Conversely, we prove $I_{L}\subset U_{L}$.
Let $\gamma\in L$, and let $\gamma_1,\ldots,\gamma_k\in \pm \Pi$ 
such that $\gamma=\gamma_1+\cdots+\gamma_k$.
We first use induction on the nonnegative integer $b$ to show that 
\begin{equation}\label{eq:desire}
s_{\gamma_1,m_1}\cdots s_{\gamma_{k},m_k} s_{\eta,m}\iota(e_{\gamma})\in U_{L}
\end{equation}
for all $m_1,\ldots,m_k\in \BN$, $m\in\BZ_+$ with $bp^r<m <(b+1)p^r$.

By \eqref{eq:0-l-k}, we have
\begin{align}\label{eq:3.29}
&E^{-}(-\gamma_1,x_1)\cdots E^{-}(-\gamma_k,x_k)\biggl(\sum_{\substack{t_1,\ldots,t_l\in \BN,\\ p^r\mid t_i\text{ for }1\le i\le l}}(-1)^{t_1+\cdots+t_l}\biggl(\prod_{i=1}^{l}\binom{\pm ap^r}{t_i}\biggr) s_{\eta,m-t_1-\cdots-t_l}\nonumber\\
&\?\times\iota(e_{\gamma})x_1^{-t_1}\cdots x_{l}^{-t_{l}}\biggr)\nonumber\\
&=\Big(\prod_{i=1}^{k}\epsilon(\gamma_i,\sum_{i<j\le k}\gamma_j)^{-1}\Big)\Big(\prod_{1\le i<j\le k}(x_i-x_j)^{-\langle \gamma_i,\gamma_j \rangle}\Big)Y(\iota(e_{\gamma_1}),x_1)\cdots Y(\iota(e_{\gamma_k}),x_k)s_{\eta,m}.
\end{align}
Then, taking the coefficient of $x_1^{m_1}\cdots x_k^{m_k}$ for 
$m_1,\ldots,m_k\in \BN$ in \eqref{eq:3.29}, we obtain
\begin{align}\label{eq:sum}
&\sum_{\substack{t_1,\ldots,t_l\in \BN,\\ p^r\mid t_i\text{ for }1\le i\le l}}(-1)^{t_1+\cdots+t_l}\bigg(\prod_{i=1}^{l}\binom{\pm ap^r}{t_i}\bigg)s_{\gamma_1,m_1+t_1}\cdots s_{\gamma_l,m_l+t_l}s_{\gamma_{l+1},m_{l+1}}\cdots s_{\gamma_{k},m_{k}}s_{\eta,m-t_1-\cdots-t_l}\iota(e_{\gamma})\nonumber\\
&=\Res_{x_1}x_1^{-m_1-1}\cdots \Res_{x_k}x_k^{-m_k-1}\Big(\Big(\prod_{i=1}^{k}\epsilon(\gamma_i,\sum_{i<j\le k}\gamma_j)^{-1}\Big)\Big(\prod_{1\le i<j\le k}(x_i-x_j)^{-\langle \gamma_i,\gamma_j \rangle}\Big)\nonumber \\
&\?\times Y(\iota(e_{\gamma_1}),x_1)\cdots Y(\iota(e_{\gamma_k}),x_k)s_{\eta,m}\Big)\in U_{L}.
\end{align}
Consider the case $0<m< p^r$ (i.e., $b=0$).
If $t_i>0$ for some $1\le i\le l$, then $t_i\ge p^r$ as 
$p^r\mid t_i$, and thus $m-t_1-\cdots-t_l<0$.
Since $s_{\eta,q}=0$ for $q<0$, we obtain
\begin{align*}
&\sum_{\substack{t_1,\ldots,t_l\in \BN,\\ p^r\mid t_i\text{ for }1\le i\le l,\\ (t_1,\ldots,t_l)\ne (0,\ldots,0)}}(-1)^{t_1+\cdots+t_l}\biggl(\prod_{i=1}^{l}\binom{\pm ap^r}{t_i}\biggr)s_{\gamma_1,m_1+t_1}\cdots s_{\gamma_l,m_l+t_l}s_{\gamma_{l+1},m_{l+1}}\cdots s_{\gamma_{k},m_{k}} \nonumber\\
& \?\times s_{\eta,m-t_1-\cdots-t_l}\iota(e_{\gamma})=0.
\end{align*}
Consequently, \eqref{eq:sum} implies that \eqref{eq:desire} holds for $0<m<p^r$.
Assume that \eqref{eq:desire} holds whenever $cp^r<m<(c+1)p^r$
for some integer $c$ with $0\le c<b$ (equivalently, for all $m\in \BZ_+$ with $p^r\nmid m$ and $m<bp^r$).
Consider the case $bp^r< m<(b+1)p^r$.
By \eqref{eq:sum}, we have
\begin{align*}
&\?s_{\gamma_1,m_1}\cdots s_{\gamma_l,m_l}s_{\gamma_{l+1},m_{l+1}}\cdots s_{\gamma_{k},m_{k}} s_{\eta,m}\iota(e_{\gamma})\nonumber\\
&=\Res_{x_1}x_1^{-m_1-1}\cdots \Res_{x_k}x_k^{-m_k-1}\nonumber \\
&\?\times\Big(\Big(\prod_{i=1}^{k}\epsilon(\gamma_i,\sum_{i<j\le k}\gamma_j)^{-1}\Big)\Big(\prod_{1\le i<j\le k}(x_i-x_j)^{-\langle \gamma_i,\gamma_j \rangle}\Big)Y(\iota(e_{\gamma_1}),x_1)\cdots Y(\iota(e_{\gamma_k}),x_k)s_{\eta,m}\Big)\nonumber\\
&\?-\Bigl(\sum_{\substack{t_1,\ldots,t_l\in \BN,\\ p^r\mid t_i\text{ for }1\le i\le l,\\ (t_1,\ldots,t_l)\ne (0,\ldots,0)}}(-1)^{t_1+\cdots+t_l}\biggl(\prod_{i=1}^{l}\binom{\pm ap^r}{t_i}\biggr)s_{\gamma_1,m_1+t_1}\cdots s_{\gamma_l,m_l+t_l}s_{\gamma_{l+1},m_{l+1}}\cdots s_{\gamma_{k},m_{k}}\nonumber\\
&\?\?\times  s_{\eta,m-t_1-\cdots-t_l}\iota(e_{\gamma})\Bigr).
\end{align*}
Since
$m-t_1-\cdots-t_l<bp^r$ and $p^r\nmid (m-t_1-\cdots-t_l)$, 
it follows from the induction hypothesis that \eqref{eq:desire} 
holds for $bp^r<m<(b+1)p^r$.
This completes the induction step.

Let $\gamma\in L$, and let $\alpha^1,\ldots,\alpha^k\in \Pi$.
Let $\gamma^1,\ldots,\gamma^t\in \pm\Pi$ such that 
$\gamma=(\alpha^1+\cdots+\alpha^k)+(\gamma^1+\cdots+\gamma^t)$.
Then \eqref{eq:desire} shows that
\begin{equation*}
	s_{\alpha^1,m_1}\cdots s_{\alpha^k,m_k}s_{\gamma^1,m_1'}\cdots s_{\gamma^t,m_t'}s_{\eta,m}\iota(e_{\gamma})\in U_{L}
\end{equation*}
for $m_i,m_j' \in \BN$, and $m \in \BZ_+$ with $p^r \nmid m$.
Taking $m_1'=\cdots=m_t'= 0$ and using $s_{\gamma^j,0}=1$,
we obtain
\begin{equation*}
	s_{\alpha^1,m_1}\cdots s_{\alpha^k,m_k} s_{\eta,m}\iota(e_{\gamma})\in U_{L}
\end{equation*}
for $\gamma \in L$, $\alpha^i\in\Pi$, $m_i\in\BZ_+$, 
$k\in\BN$, and $m\in\BZ_+$ with $p^r\nmid m$.
Therefore, $I_{L} = U_{L}$.

Now we prove that $I_{L}$ is in fact an ideal of $V_{L,\BF}$.
Since $I_{L}$ is a $V_{L,\BF}$-module,
\cite[Remark 3.9.8]{LL} (see also \cite[Section 2]{JLM})
implies that
it suffices to show that $I_{L}$ is $\B$-stable.
For any $k\in \BN$ and any $m\in \BZ_+$ with $p^r\nmid m$, 
by \eqref{eq:B-mod-act} and \eqref{eq:Y-s-alpha-n}, we have
\begin{equation}\label{eq:D-k-s-delta-m}
\D^{(k)}s_{\eta,m}=(s_{\eta,m})_{-k-1}\mathbf{1}=
\sum_{l=m}^{k+m}\binom{l}{m}s_{\eta,l}s_{-\eta,k-l+m}.
\end{equation}
First consider the terms with $p^r\nmid l$ in \eqref{eq:D-k-s-delta-m}.
Lemma \ref{DG3.3}(i) implies that $s_{-\eta,k-l+m}$ is a linear 
combination of elements in $\mathcal{S}$.
Then by \eqref{eq:cha-IL}, we have
$\binom{l}{m}s_{\eta,l}s_{-\eta,k-l+m}\in I_{L}$.
We now turn to the remaining terms with $p^r\mid l$ in \eqref{eq:D-k-s-delta-m}.
Since $p^r\nmid m$, by Lucas' theorem, 
we have $\binom{l}{m}\equiv 0\pmod{p}$, 
and thus $\binom{l}{m}s_{\eta,l}s_{-\eta,k-l+m}=0\in I_{L}$.
It follows from \eqref{eq:D-k-s-delta-m} 
that $\D^{(k)}s_{\eta,m}\in I_{L}$ for $k\in \BN$ and $m\in \BZ_+$ with $p^r\nmid m$.
Consequently, the desired result follows from the fact 
that $I_{L}$ is generated by $\{s_{\eta,m}\mid m\in \BZ_+\text{ with } p^r\nmid m\}$ 
as a $V_{L,\BF}$-module.
\end{proof}

Let $\mathcal{P}$ denote the set of all partitions, 
including the empty partition $\varnothing$.
For each $\beta\in L^*$, $\lambda=(\lambda_1,\ldots,\lambda_k)\in \mathcal{P}$ 
with $\lambda_1\ge \cdots\ge \lambda_{k}>0$, and $t\in \BN$, we define the 
vectors $s_{\lambda,p^t}(\beta),r_{\lambda,p^t}(\beta)$ 
by the following $k\times k$ determinants:
\begin{equation}\label{eq:s-Lambda-pt}
s_{\lambda,p^t}(\beta):=\det (s_{\beta,p^{t}(\lambda_i+j-i)}), \;\;r_{\lambda,p^t}(\beta):=\det (r_{\beta,p^{t}(\lambda_i+j-i)})
\end{equation}
where $s_{\beta,m}=r_{\beta,m}=0$ if $m<0$.
Set $s_{\varnothing,p^t}(\beta)=r_{\varnothing,p^t}(\beta)=1$.
In particular, we write $s_{\lambda}(\beta)$ for $s_{\lambda,1}(\beta)$, as in \cite{DG}.
For $\alpha\in L$, it is easy to see that 
\begin{equation}\label{eq:r-s-close}
(s_{\lambda,p^t}(\alpha)u,v)_{V_{L,\BF}}=(u,r_{\lambda,p^t}(\alpha)v)_{V_{L,\BF}} 
\end{equation}
for $u,v\in V_{L,\BF}$, $\lambda\in \mathcal{P}$, and $t\in \BN$.

We now obtain the following result.

\begin{proposition}\label{prop:s-chur-basis}
\begin{enumerate}[(i)]
\item The set
 \begin{equation}\label{eq:other-V-LF-bas}
 \{s_{\alpha^1,m_1}\cdots s_{\alpha^k,m_k}s_{\eta,t_1}\cdots s_{\eta,t_l}\iota(e_{\gamma})\mid  \gamma\in L, \alpha^i\in \Pi\setminus\{\alpha_n\}, m_i,t_j\in \BZ_+, k,l\in \BN\}
 \end{equation}
 is an $\BF$-basis of $V_{L,\BF}$.
\item The set
\begin{align}\label{eq:V-L-quo-bas-1}
\{s_{\alpha^1,m_1}\cdots s_{\alpha^k,m_k}s_{\eta,m_1'}\cdots s_{\eta,m_{l}'}\iota(e_{\gamma})+I_{L}&\mid   \gamma\in L,  \alpha^i\in \Pi\setminus\{\alpha_n\}, m_i \in \BZ_+,\nonumber \\
& \?m_j'\in \BZ_+ \text{ with } p^r\mid m_j',k,l\in \BN\}
\end{align}
is an $\BF$-basis of $V_{L,\BF}/I_L$.
\item 
The set 
\begin{align}\label{eq:V-L-quo-bas-2}
\{s_{\alpha^1,m_1}\cdots s_{\alpha^k,m_k}\iota(e_{\gamma})+I_{L}\mid \gamma\in L,
\alpha^i\in \Pi, m_i\in \BZ_+ \text{ with } p^r\mid m_i \text{ if } \alpha^i=\alpha_n, k\in \BN\}
\end{align}
is an $\BF$-basis of $V_{L,\BF}/I_L$.
\item 
The set 
\begin{equation}\label{eq:V-L-quo-bas-3}
\{s_{\lambda^1}(\alpha_1)\cdots s_{\lambda^{n-1}}(\alpha_{n-1})s_{\lambda^{n},p^r}(\alpha_n)\iota(e_{\gamma})+I_{L}\mid \lambda^1,\ldots, \lambda^n\in \mathcal{P},\gamma\in L \}
\end{equation}
is an $\BF$-basis of $V_{L,\BF}/I_{L}$.
\end{enumerate}
\end{proposition}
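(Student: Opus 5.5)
The plan is to reduce everything to statements about the polynomial algebra $M(1)_{\BF}$. Over $\BZ$, $M(1)_{\BZ}$ is the ring of ``divided power'' type generated by the $s_{\alpha,m}$. The key identity is $E^{-}(-\alpha-\beta,x)=E^{-}(-\alpha,x)E^{-}(-\beta,x)$, which makes the assignment $\gamma\mapsto E^-(-\gamma,x)$ multiplicative.

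For (i), write $\eta=\sum_{i<n} i\alpha_i+n\alpha_n$. Then
\[
E^{-}(-\eta,x)=\prod_{i<n}E^{-}(-\alpha_i,x)^{i}\,E^{-}(-\alpha_n,x)^{n}.
\]
The coefficient $n=-1+ap^r\equiv -1\pmod p$ is a unit, but raising to the power $n$ is not obviously invertible over $\BZ$. So I would rather argue via $E^-(-\alpha_n,x)^{-1}=E^-(\alpha_n,x)$, combined with Lemma \ref{lem:DG-s--alpha}.

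Concretely, fix an ordering and show by induction on $m$ that $s_{\alpha_n,m}$ lies in the $\BF$-algebra generated by $\{s_{\alpha_i,j}\}_{i<n}\cup\{s_{\eta,j}\}$. Comparing coefficients of $x^m$, $s_{\eta,m}$ equals $n\,s_{\alpha_n,m}$ plus terms involving the $s_{\alpha_i,\cdot}$ ($i<n$) and the $s_{\alpha_n,j}$ with $j<m$. Since $n\not\equiv 0\pmod p$, this gives spanning.

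For linear independence, a graded dimension count suffices. Both sets are indexed by multisets of the same type in each $L\times\BN$-degree: $n$ ``colors'' of generators $s_{\cdot,m}$, $m\ge1$. Moreover, $M(1)_{\BF}$ has the same graded dimension as the polynomial ring in the $s_{\alpha_i,m}$, by Lemma \ref{DG3.3}. So spanning plus finite-dimensionality of each graded piece gives a basis. (Implicitly, $M(1)_{\BF}$ is a commutative polynomial algebra in the $s_{\alpha_i,m}$; I will use this freely.)

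For (ii), by (i) and the definition of $I_L$ I would show that $I_L$ is spanned by the basis elements of (i) containing at least one $s_{\eta,t}$ with $p^r\nmid t$. The inclusion ``$\supseteq$'' is clear after reordering the commuting factors. For ``$\subseteq$'', a general generator $s_{\alpha^1,m_1}\cdots s_{\eta,m}\iota(e_\alpha)$ with some $\alpha^i=\alpha_n$ must be rewritten. I would use (i) to expand each $s_{\alpha_n,j}$ in terms of the new generators, which keeps the factor $s_{\eta,m}$. Hence $I_L$ is the span of a subset of the basis (i), and the complementary basis vectors project to a basis of the quotient; these are exactly \eqref{eq:V-L-quo-bas-1}.

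For (iii), I compare with (ii) modulo $I_L$. Restrict the recursion above to indices divisible by $p^r$. Modulo the ideal of $M(1)_{\BF}$ generated by the $s_{\eta,t}$ with $p^r\nmid t$, the element $s_{\eta,p^rj}$ equals $n\,s_{\alpha_n,p^rj}$ plus lower terms and terms in the $s_{\alpha_i,\cdot}$ ($i<n$), plus products involving some $s_{\alpha_n,q}$ with $p^r\nmid q$. The hard step is exactly controlling those $p^r\nmid q$ contributions.

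My first attempt there is Lucas' theorem applied to $E^-(-\alpha_n,x)^n$, using $n\equiv -1\pmod{p^r}$ to reduce to $E^-(\alpha_n,x)\,E^-(-\alpha_n,x)^{ap^r}$. Note that $E^-(-\alpha_n,x)^{p^r}$ involves only powers $x^{p^r k}$ modulo $p$, since in characteristic $p$ it is a $p^r$-th power of a series with coefficients in a commutative ring. This yields a triangular change of variables between the two sets in each graded piece. Combined with equal graded dimensions (both sets are counted by the same multisets), it proves (iii).

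For (iv), the Schur-function identity of \cite{DG} says that the $s_\lambda(\alpha)$ span the same space as the monomials in the $s_{\alpha,m}$, unitriangularly over $\BZ$. The same holds for the ``Frobenius twisted'' family $s_{\beta,p^rm}$: these satisfy the same Jacobi–Trudi structure, so the determinants $s_{\lambda,p^r}(\alpha_n)$ are unitriangular in the monomials $\prod s_{\alpha_n,p^r\lambda_i}$. Substituting into (iii) gives (iv).
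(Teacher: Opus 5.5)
Your parts (i), (ii) and (iv) are sound, but there is a gap in (iii), at exactly the step you flag as hard.

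For (i), you generate $s_{\alpha_n,m}$ inductively from $s_{\eta,m}=n\,s_{\alpha_n,m}+(\text{lower})$ and then count graded dimensions. This is a repackaging of the paper's unitriangularity argument for the order $\prec$. For (ii), you describe $I_L$ as $\mathcal{J}\otimes\BF[L]$, where $\mathcal{J}$ is the ideal of the polynomial ring $M(1)_{\BF}$ generated by the $s_{\eta,t}$ with $p^r\nmid t$. This is more explicit than the paper's one-line justification; in the ``$\supseteq$'' direction the remaining factors $s_{\eta,t_j}$ must first be expanded in $\mathcal{S}$, which is harmless. For (iv), Jacobi--Trudi is a formal identity over any commutative ring, so it applies to the sequence $s_{\alpha_n,p^rk}$, as in the paper.

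In (iii) you only look at the relation for $s_{\eta,p^rj}$ and then assert that the Frobenius observation ``yields a triangular change of variables''. It does not do so by itself. In $E^-(\alpha_n,x)E^-(-\alpha_n,x)^{ap^r}$ the second factor is a series in $x^{p^r}$, but its coefficients are polynomials in $s_{\alpha_n,q}^{p^r}$ for arbitrary $q$. The first factor contributes $s_{-\alpha_n,q}$ for every $q$. So the unwanted $s_{\alpha_n,q}$ with $p^r\nmid q$ are still present. More fundamentally, nothing you wrote uses the relations that define the quotient, and without them (iii) is false: the representatives in \eqref{eq:V-L-quo-bas-2} form a proper subset of the basis $\mathcal{S}_L$ of $V_{L,\BF}$.

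The missing idea is to use the recursion at indices \emph{not} divisible by $p^r$. For $p^r\nmid q$ one has, modulo $\mathcal{J}$,
$0\equiv s_{\eta,q}=n\,s_{\alpha_n,q}+(\text{a polynomial in the } s_{\alpha_i,\cdot}, i<n, \text{ and the } s_{\alpha_n,j}, j<q)$.
Since $n\equiv -1$ is invertible, induction on $q$ puts every $s_{\alpha_n,q}+\mathcal{J}$ in the subalgebra generated by the $s_{\alpha_i,m}$ ($i<n$) and the $s_{\alpha_n,p^rk}$. Hence the monomials in \eqref{eq:V-L-quo-bas-2} span. They are equinumerous, degree by degree, with the basis \eqref{eq:V-L-quo-bas-1}, so your dimension count finishes (iii). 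No Frobenius argument is needed.

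Your Frobenius route can be salvaged, but only by feeding in these same relations. Modulo $\mathcal{J}$, $E^-(-\eta,x)$ involves only powers of $x^{p^r}$, hence so does $\prod_{i<n}E^-(-\alpha_i,x)^{i}\,E^-(\alpha_n,x)$, and its vanishing coefficients give the recursion. This is precisely the input in the paper's proof. For a monomial $g_j\notin A_m$, the corresponding $f_\eta$-monomial contains some $s_{\eta,q}$ with $p^r\nmid q$ and so lies in $I_L$, which lets $g_j$ be pushed down in the order $\prec$.
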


\begin{proof}
(i) We first prove that the set
\begin{equation}\label{eq:other-M(1)F-bas}
\{s_{\alpha^1,m_1}\cdots s_{\alpha^k,m_k}s_{\eta,t_1}\cdots s_{\eta,t_l}\mid  \alpha^i\in \Pi\setminus\{\alpha_n\}, m_i,t_j\in \BZ_+, k,l\in \BN\}
\end{equation}
is an $\BF$-basis of $M(1)_{\BF}$. 
Recall that $M(1)_\BF=\bigoplus_{m\in\BN}(M(1)_\BF)_{(m)}$ 
is an $\BN$-graded vertex algebra,
where $(M(1)_\BF)_{(m)}=\operatorname{span}\{s_{\alpha^1,m_1}\cdots s_{\alpha^k,m_k}\mid \alpha^i\in \Pi, m_i\in \BZ_+, m_1+\cdots +m_k=m, k\in \BN\}$.

We define an order $\prec$ on the set $\mathcal{S}$ (see \eqref{eq:M}).
For $\lambda=(\lambda_1,\ldots,\lambda_l)\in \mathcal{P}$ 
and $\mu=(\mu_1,\ldots,\mu_t)\in \mathcal{P}$, 
we say that $\lambda$ is smaller than $\mu$ if 
$\lambda_j<\mu_j$ for some $j$ and $\lambda_i=\mu_i$ for $1\le i<j$
or if $\lambda_i=\mu_i$ for $1\le i\le l$ with $l<t$.
For $\alpha\in L$, 
we set $f_{\alpha}(\lambda):= s_{\alpha,\lambda_1}\cdots s_{\alpha,\lambda_l}$, 
and we say that $f_{\alpha}(\lambda)$ is smaller than $f_{\alpha}(\mu)$ 
if $\lambda$ is smaller than $\mu$ (see \cite[Lemma 2]{Mu}).
Let $\lambda^1,\ldots,\lambda^n\in \mathcal{P}$ and $\mu^1,\ldots,\mu^n\in \mathcal{P}$.
We say that 
\begin{equation*}
f_{\alpha_n}(\lambda^n)\cdots f_{\alpha_1}(\lambda^1) \prec f_{\alpha_n}(\mu^n)\cdots f_{\alpha_1}(\mu^1) 
\end{equation*}
if $f_{\alpha_j}(\lambda^j)$ is smaller than $f_{\alpha_j}(\mu^j)$ for some $j$, 
and $f_{\alpha_i}(\lambda^i)=f_{\alpha_i}(\mu^i)$ for all $n\ge i>j$.
Note that each element of $\mathcal{S}$ can be written as
$f_{\alpha_n}(\lambda^n)\cdots f_{\alpha_1}(\lambda^1)$ for 
some $\lambda^1,\ldots,\lambda^n \in\mathcal{P}$.
Then $\prec$ defines a total order on the set $\mathcal{S}$.
The order is compatible with multiplication: 
\begin{equation}\label{eq:ord-preserve}
    \text{if }f_1\prec f_2,\text{ then }f_1h\prec f_2h \text{ for every monomial }h.
\end{equation}

For $m\in \BZ_+$, \eqref{eq:def-E+E-} and \eqref{eq:eta} 
give the following operator identity:
\begin{align}\label{eq:s-f}
s_{\eta,m}&=\Res_{x}x^{-m-1}E^{-}(-\eta,x)\nonumber\\
&=\Res_{x}x^{-m-1}\big(E^{-}(-\alpha_1,x)E^{-}(-\alpha_{2},x)^2\cdots E^{-}(-\alpha_n,x)^{n}\big)\nonumber\\
&=ns_{\alpha_n,m}+v\nonumber\\
&=-s_{\alpha_n,m}+v,
\end{align}
where $v$ is a linear combination of degree $m$ monomials 
smaller than $s_{\alpha_n,m}$ with respect to $\prec$.

Let $\lambda^{1},\ldots,\lambda^{n}\in \mathcal{P}$ and 
$\lambda^n=(\lambda_1,\ldots,\lambda_t)$ for some $t\in \BN$.
By \eqref{eq:ord-preserve} and \eqref{eq:s-f}, we see that 
\begin{equation}\label{eq:eat=alpha+v}
f_{\eta}(\lambda^n)f_{\alpha_{n-1}}(\lambda^{n-1})\cdots f_{\alpha_1}(\lambda^1)
=(-1)^{t}f_{\alpha_n}(\lambda^n)f_{\alpha_{n-1}}(\lambda^{n-1})\cdots f_{\alpha_1}(\lambda^1)+v,
\end{equation} 
where $v$ is a linear combination of smaller monomials with respect 
to $\prec$ and each monomial is of the same degree as 
$f_{\alpha_n}(\lambda^n)f_{\alpha_{n-1}}(\lambda^{n-1})\cdots f_{\alpha_1}(\lambda^1)$.
Since $(M(1)_\BF)_{(m)}$ is finite-dimensional for every $m\in\BN$, 
and since $\mathcal{S}$ is an ordered basis of $M(1)_{\BF}$,
it follows from \eqref{eq:eat=alpha+v} that \eqref{eq:other-M(1)F-bas} 
is an $\BF$-basis of $M(1)_{\BF}$.
Therefore, \eqref{eq:other-V-LF-bas} is also an $\BF$-basis of $V_{L,\BF}$.

(ii) By (i) and by the definition of $I_{L}$ (see \eqref{eq:cha-IL}), 
\eqref{eq:V-L-quo-bas-1} forms an $\BF$-basis of $V_{L,\BF}/I_{L}$.

(iii) We define an order $\dot{\prec}$ on the set in \eqref{eq:other-M(1)F-bas}
with $\alpha_n$ replaced by $\eta$ in the definition of $\prec$.
Let $m\in\BN$. Set
\begin{align*}
A_m&=\{s_{\alpha^1,m_1}\cdots s_{\alpha^k,m_k} \mid \alpha^i\in \Pi, m_i\in \BZ_+ \text{ with } p^r\mid m_i \text{ if } \alpha^i=\alpha_n, k\in \BN, m_1+\cdots+m_k=m\},\\
B_m&=\{s_{\alpha^1,m_1}\cdots s_{\alpha^k,m_k}s_{\eta,m'_1}\cdots s_{\eta,m'_{l}} \mid \alpha^i\in \Pi\setminus\{\alpha_n\}, m_i,m'_j\in \BZ_+, p^r\mid m'_j,\ k,l\in \BN, \\
&\hspace*{22.73em}m_1+\cdots+m_k+m'_1+\cdots+m'_l=m\}.
\end{align*}
Denote by $g_1\prec g_2\prec\ldots\prec g_N$ the elements of $\mathcal{S}$
of degree $m$, listed in increasing order.
We use induction on $j$ to show that 
if $g_j=f_{\alpha_n}(\lambda^n)\cdots f_{\alpha_1}(\lambda^1)\not\in A_m$, then 
\begin{equation*}
f_{\alpha_n}(\lambda^n)f_{\alpha_{n-1}}(\lambda^{n-1})\cdots f_{\alpha_1}(\lambda^1)
\equiv v \pmod{I_L},
\end{equation*}
where $v$ is a linear combination of elements in $B_m$ smaller than 
$f_{\eta}(\lambda^n)f_{\alpha_{n-1}}(\lambda^{n-1})\cdots f_{\alpha_1}(\lambda^1)$ 
with respect to $\dot\prec$; 
if $g_j=f_{\alpha_n}(\lambda^n)f_{\alpha_{n-1}}(\lambda^{n-1})\cdots f_{\alpha_1}(\lambda^1)\in A_m$, 
then 
\begin{equation*}
f_{\alpha_n}(\lambda^n)f_{\alpha_{n-1}}(\lambda^{n-1})\cdots f_{\alpha_1}(\lambda^1)
\equiv (-1)^t f_{\eta}(\lambda^n)f_{\alpha_{n-1}}(\lambda^{n-1})\cdots f_{\alpha_1}(\lambda^1)+v \pmod{I_L},
\end{equation*}
where $v$ is a linear combination of smaller elements in $B_m$ 
with respect to $\dot\prec$.
Clearly, the induction statement holds when $\lambda^n=\varnothing$.
Consider the induction step with $\lambda^n=(\lambda_1,\ldots,\lambda_t)\ne\varnothing$. 
If $g_j=f_{\alpha_n}(\lambda^n)f_{\alpha_{n-1}}(\lambda^{n-1})\cdots f_{\alpha_1}(\lambda^1)\in A_m$,
by \eqref{eq:eat=alpha+v}, we have
\begin{equation*}
f_{\alpha_n}(\lambda^n)f_{\alpha_{n-1}}(\lambda^{n-1})\cdots f_{\alpha_1}(\lambda^1)
=(-1)^{t}f_{\eta}(\lambda^n)f_{\alpha_{n-1}}(\lambda^{n-1})\cdots f_{\alpha_1}(\lambda^1)+v,
\end{equation*}
where $v$ is a linear combination of smaller monomials with 
respect to $\prec$,
and our assertion follows from the induction hypothesis.
Now consider the case $g_j=f_{\alpha_n}(\lambda^n)\cdots f_{\alpha_1}(\lambda^1)\not\in A_m$.
It follows from \eqref{eq:cha-IL} that 
$f_{\eta}(\lambda^n)f_{\alpha_{n-1}}(\lambda^{n-1})\cdots f_{\alpha_1}(\lambda^1)\in I_L$.
Then by \eqref{eq:eat=alpha+v}, we have
\begin{equation*}
f_{\alpha_n}(\lambda^n)f_{\alpha_{n-1}}(\lambda^{n-1})\cdots f_{\alpha_1}(\lambda^1)
\equiv v \pmod{I_L},
\end{equation*}
where $v$ is a linear combination of smaller monomials with respect to $\prec$,
and our assertion follows from the induction hypothesis.
Consequently, \eqref{eq:V-L-quo-bas-2} is equivalent to \eqref{eq:V-L-quo-bas-1} 
in $V_{L,\BF}/I_L$. Since \eqref{eq:V-L-quo-bas-1} is a basis of $V_{L,\BF}/I_L$ by (ii),
it follows that \eqref{eq:V-L-quo-bas-2} is a basis of $V_{L,\BF}/I_L$.

(iv) By the argument in \cite[Proposition 3.6]{DG} 
(see also \cite[Lemma 2]{Mu}), the assertion (iv) follows from (iii).
\end{proof}

Let $\xi:=\frac{\eta}{a}\in L^*$.
Note that $\gcd(a,p)=1$.
Then by \cite[Lemma 4]{Mu},
$s_{-\xi,kp^r}$ is a well-defined element of $V_{L,\BF}$ for $k\in \BN$, 
and thus $s_{\lambda,p^r}(-\xi)$ is well defined for 
$\lambda\in \mathcal{P}$ (see \eqref{eq:s-Lambda-pt}).
We then obtain the following result (cf. \cite[Proposition 3.6]{DG}).
\begin{lemma}\label{lem:3.11}
For any $\lambda,\lambda'\in \mathcal{P}$, we have
\begin{equation*}
(s_{\lambda,p^r}(-\xi),s_{\lambda',p^r}(\alpha_n))_{V_{L,\BF}}=\delta_{\lambda,\lambda'}.
\end{equation*}
\end{lemma}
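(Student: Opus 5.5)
The plan is to reduce the statement to the classical orthonormality of Schur functions, in the form used in \cite[Proposition 3.6]{DG}, after a change of variables $x\mapsto x^{p^r}$.

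\textbf{Where the computation lives.} Since $\gcd(a,p)=1$, the elements $s_{-\xi,m}$ are defined over the localization $\BZ_{(p)}$. So I will carry out all computations in $V_{L,\BZ}\otimes\BZ_{(p)}\subset V_L$, using the characteristic-zero form $(\cdot,\cdot)_{V_L}$, and reduce modulo $p$ only at the end. This is legitimate because the elements in the statement come from \cite[Lemma 4]{Mu} by exactly such a reduction, and the pairing on $V_{L,\BF}$ is the reduction of $(\cdot,\cdot)_{V_L}$.

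\textbf{Step 1: the generating-function identity.} The key number is
\begin{equation*}
\langle -\xi,\alpha_n\rangle=-\tfrac{1}{a}\langle\eta,\alpha_n\rangle=-\tfrac{n+1}{a}=-p^r,
\end{equation*}
by \eqref{eq:eta-alpha_i}. Using the adjointness rule $(s_{\alpha,m}u,v)=(u,r_{\alpha,m}v)$ (valid for all $\alpha\in L^*$ in $V_L$ by the invariance of $(\cdot,\cdot)_{V_L}$), I pair products of $E^-(\xi,x_i)\mathbf{1}$ with products of $E^-(-\alpha_n,y_j)\mathbf{1}$. I move the $E^+$ operators to the right with \eqref{eq:E+E-E-E+}; they kill $\mathbf{1}$, and $(\mathbf{1},\mathbf{1})=1$. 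This gives
\begin{equation*}
\Bigl(\prod_{i=1}^k E^-(\xi,x_i)\mathbf{1},\ \prod_{j=1}^l E^-(-\alpha_n,y_j)\mathbf{1}\Bigr)_{V_L}=\prod_{i,j}(1-x_iy_j)^{-p^r}.
\end{equation*}

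\textbf{Step 2: reduction modulo $p$.} Modulo $p$ we have $(1-x_iy_j)^{-p^r}\equiv(1-x_i^{p^r}y_j^{p^r})^{-1}$. Two consequences follow.

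First, in the pairing only coefficients $s_{-\xi,m}$ and $s_{\alpha_n,m'}$ with $p^r\mid m,m'$ survive. This is consistent with Lucas' theorem, as already used in Lemma \ref{lem:4.4}.

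Second, if I set $h_k:=s_{-\xi,kp^r}$ and $h'_k:=s_{\alpha_n,kp^r}$, then the generating series $\sum_k h_k X^k$ and $\sum_k h'_k Y^k$, with $X=x^{p^r}$ and $Y=y^{p^r}$, pair exactly like complete symmetric functions under the Hall inner product. That is, the identity above becomes the Cauchy kernel $\prod(1-X_iY_j)^{-1}$.

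\textbf{Step 3: Jacobi--Trudi and orthonormality.} By \eqref{eq:s-Lambda-pt}, $s_{\lambda,p^r}(-\xi)=\det(h_{\lambda_i+j-i})$ and $s_{\lambda',p^r}(\alpha_n)=\det(h'_{\lambda'_i+j-i})$. These are the Jacobi--Trudi determinants in the $h$'s.

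I then argue exactly as in \cite[Proposition 3.6]{DG}:
\begin{itemize}
\item Expand the Cauchy kernel $\prod(1-X_iY_j)^{-1}=\sum_\lambda s_\lambda(X)s_\lambda(Y)$, taking enough variables.
\item Compare coefficients with the expansion of the left-hand side from Step 1 in terms of monomials in the $h_k$ and $h'_k$.
\item Conclude that the Gram matrix of the Jacobi--Trudi determinants is the identity.
\end{itemize}
Since every coefficient involved is in $\BZ_{(p)}$, the identity reduces to $\delta_{\lambda,\lambda'}$ in $\BF$.

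\textbf{Main obstacle.} I expect the delicate point to be justifying the passage to $\BZ_{(p)}$. Concretely, one must check that the adjointness $(s_{-\xi,m}u,v)=(u,r_{-\xi,m}v)$ and the commutation relation \eqref{eq:rs-sr} hold for the fractional vector $-\xi\in L^*$ acting on $V_L$. One must also check that the $p$-adic integrality of $s_{-\xi,kp^r}$ given by \cite[Lemma 4]{Mu} is compatible with the definition of the form on $V_{L,\BF}$. Once that is in place, the remaining combinatorics is the standard Schur-function argument.
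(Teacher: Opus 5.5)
Your proposal is correct and follows essentially the same route as the paper: pair $\prod_i E^{-}(\xi,x_i)\mathbf{1}$ with $\prod_j E^{-}(-\alpha_n,y_j)\mathbf{1}$ to get $\prod_{i,j}(1-x_iy_j)^{-p^r}=\prod_{i,j}(1-x_i^{p^r}y_j^{p^r})^{-1}$ in characteristic $p$, then run the Cauchy-identity/Jacobi--Trudi argument of \cite[Proposition 3.6]{DG} in the variables $x_i^{p^r}, y_j^{p^r}$.

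The only difference is your explicit bookkeeping over $\BZ_{(p)}$, which the paper leaves implicit by computing directly in $V_{L,\BF}$; it works because $E^{-}(\xi,x)=E^{-}(\eta,x)^{1/a}$ has coefficients in $\BZ[1/a]$. Just make sure the comparison with the Cauchy kernel is done after reduction mod $p$, since the Gram matrix is the identity only in $\BF$, not over $\BZ_{(p)}$.
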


\begin{proof}
By \eqref{eq:exp-E-E+} and \eqref{eq:E+E-E-E+}, we obtain
\begin{align*}
&\?\big(E^{-}(\xi,x_1)\cdots E^{-}(\xi,x_l)\mathbf{1}, E^{-}(-\alpha_n,y_1)\cdots E^{-}(-\alpha_n,y_k)\mathbf{1}\big)_{V_{L,\BF}} \\
&=\big( \mathbf{1}, E^{+}(\xi,x_1^{-1})\cdots E^{+}(\xi,x_l^{-1})E^{-}(-\alpha_n,y_1)\cdots E^{-}(-\alpha_n,y_k)\mathbf{1}\big)_{V_{L,\BF}} \\
&=\prod_{1\le i\le l,1\le j\le k}(1-x_iy_j)^{-\langle\xi,\alpha_n\rangle} \\
&=\prod_{1\le i\le l,1\le j\le k}(1-x_iy_j)^{-p^r} \\
&=\prod_{1\le i\le l,1\le j\le k}\Bigl(\sum_{q\in \BN}x_i^{qp^r}y_j^{qp^r}\Bigr).
\end{align*}
That is,
\begin{align*}
&\?\sum
(s_{-\xi,t_1p^r}\ldots s_{-\xi,t_lp^r},s_{\alpha_n,m_1p^r}\ldots s_{\alpha_n,m_kp^r})_{V_{L,\BF}}x_1^{t_1p^r}\ldots x_l^{t_lp^r}y_1^{m_1p^r}\ldots y_{k}^{m_kp^r}\nonumber\\
&=\prod_{1\le i\le l,1\le j\le k}\Bigl(\sum_{q\in \BN}x_i^{qp^r}y_j^{qp^r}\Bigr).
\end{align*}
Then the desired result follows from the same argument as in 
the proof of \cite[Proposition 3.6]{DG}.
\end{proof}

We finally obtain the following result.
\begin{theorem}\label{th:I-L}
Let $L$ be the root lattice of type $A_n$ with $n+1\equiv 0\pmod{p}$.
Then $\Rad(\cdot,\cdot)_{V_{L,\BF}}=I_{L}$. 
\end{theorem}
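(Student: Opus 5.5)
Since $I_L\subset\Rad(\cdot,\cdot)_{V_{L,\BF}}$ by Lemma \ref{lem:4.4}, it remains to prove the reverse inclusion. Equivalently, I will show that the form induced on $V_{L,\BF}/I_L$ is nondegenerate. By Lemma \ref{lem:some-facts-Rad}(ii) the radical is $L\times\BN$-graded, and $I_L$ is graded as well. By Lemma \ref{lem:some-facts-Rad}(iii), $u\otimes\iota(e_\gamma)$ lies in the radical exactly when $u$ does. So it suffices to work inside $M(1)_\BF$: I must show that any $u\in M(1)_\BF$ with $(u,M(1)_\BF)_{V_{L,\BF}}=0$ lies in $I_L$. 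Here I use that the pairing between the $\gamma$-sector and the $(-\gamma)$-sector reduces to the pairing on $M(1)_\BF$, and that $(\cdot,\cdot)$ is trivially zero between sectors $\gamma,\gamma'$ with $\gamma+\gamma'\neq 0$.

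The plan is to work with the basis of Proposition \ref{prop:s-chur-basis}(iv). Modulo $I_L$, every element of $M(1)_\BF$ is a combination of the vectors $s_{\lambda^1}(\alpha_1)\cdots s_{\lambda^{n-1}}(\alpha_{n-1})s_{\lambda^n,p^r}(\alpha_n)$. I will construct a dual family of test vectors $w_{\mu}$, indexed by tuples $\mu=(\mu^1,\dots,\mu^n)$ of partitions, satisfying $(w_\mu, s_{\lambda^1}(\alpha_1)\cdots s_{\lambda^n,p^r}(\alpha_n))=\delta_{\lambda,\mu}$, or at least a matrix that is unitriangular in a suitable order. Each test vector should be a product of a factor living in the ``direction'' orthogonal to $\alpha_n$ and the factor $s_{\mu^n,p^r}(-\xi)$. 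The reason is Lemma \ref{lem:3.11}, which gives $\delta$-duality in the $\alpha_n$-slot, and $\langle \xi,\alpha_i\rangle=0$ for $i<n$ by \eqref{eq:eta-alpha_i}. For the other slots I take the dual basis $\{\beta^1,\dots,\beta^n\}$ of $\{\alpha_i\}$ restricted appropriately. Concretely, I want elements $\zeta_1,\ldots,\zeta_{n-1}$ with $\langle-\zeta_i,\alpha_j\rangle=\delta_{ij}$ for $j\le n-1$. These should lie in $L$ or in $\tfrac1{a'}L$ with $p\nmid a'$, so that $s_{\mu,1}(-\zeta_i)$ is defined in $V_{L,\BF}$ by \cite[Lemma 4]{Mu}. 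The sublattice spanned by $\alpha_1,\dots,\alpha_{n-1}$ is of type $A_{n-1}$ with determinant $n\not\equiv0\pmod p$, so its dual basis has denominators prime to $p$. Since the $\zeta_i$ can be chosen in the span of $\alpha_1,\ldots,\alpha_{n-1}$, they may pair nontrivially with $\alpha_n$, and this is where the triangularity comes in.

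The key computation mirrors the proof of Lemma \ref{lem:3.11}. I pair products of $E^-(\zeta_i,x)$'s and $E^-(\xi,x)$'s against products of $E^-(-\alpha_j,y)$'s using \eqref{eq:E+E-E-E+}. This produces $\prod(1-x y)^{-\langle\cdot,\cdot\rangle}$ factors, and the Cauchy identity argument of \cite[Proposition 3.6]{DG} extracts Schur-function duality. The cross terms $\langle\zeta_i,\alpha_n\rangle$ spoil exact diagonality. I will order the tuples by putting the $\alpha_n$-slot first, which should make the pairing matrix block-triangular with identity diagonal blocks. Alternatively I could modify the $\zeta_i$ by multiples of $\xi$ to kill the cross pairings. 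This works because $\langle\xi,\alpha_n\rangle=p^r$ and $\langle\zeta_i,\alpha_n\rangle$ is an integer, but a correction like $\zeta_i-\tfrac{c}{p^r}\xi$ is not allowed in general. Finding an acceptable correction, or controlling the triangular structure when the exponent of the $\alpha_n$-pairing is not a multiple of $p^r$, is the step I expect to be the main obstacle.

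Granting this, the argument concludes as follows. Take $u\in\Rad(\cdot,\cdot)_{V_{L,\BF}}\cap M(1)_\BF$ and write $u\equiv\sum c_\lambda s_{\lambda^1}(\alpha_1)\cdots s_{\lambda^n,p^r}(\alpha_n)\pmod{I_L}$. Since $I_L\subset\Rad$, pairing with the $w_\mu$ in decreasing order forces every $c_\lambda=0$. Hence $u\in I_L$, and therefore $\Rad(\cdot,\cdot)_{V_{L,\BF}}=I_L$.
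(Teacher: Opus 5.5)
Your route differs from the paper's and it works. The one step you leave open closes with a short degree count. Your worry about the exponent $\langle\zeta_i,\alpha_n\rangle$ not being a multiple of $p^r$ is unfounded, and no correction of the $\zeta_i$ by multiples of $\xi$ is needed.

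\textbf{Closing the open step.} Take $\zeta_i\in\frac1n(\BZ\alpha_1\oplus\cdots\oplus\BZ\alpha_{n-1})$ with $\langle\zeta_i,\alpha_j\rangle=\delta_{ij}$ for $j\le n-1$. This sign matches Lemma \ref{lem:3.11} and gives identity diagonal blocks; with your sign $\langle-\zeta_i,\alpha_j\rangle=\delta_{ij}$ the blocks become signed transpose-permutation matrices, which is harmless. Put $c_i=\langle\zeta_i,\alpha_n\rangle$ $(=-i/n)$. Write $b_\lambda=s_{\lambda^1}(\alpha_1)\cdots s_{\lambda^n,p^r}(\alpha_n)$. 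Pairing monomials of $w_\mu$ against monomials of $b_\lambda$ is governed by
\[
\prod_{i<n}\prod_{a,b}(1-x_{i,a}y_{i,b})^{-1}\cdot\prod_{i<n}\prod_{a,b}(1-x_{i,a}y_{n,b})^{-c_i}\cdot\prod_{c,b}(1-z_cy_{n,b})^{-p^r}.
\]
The middle factor is a power series in the products $x_{i,a}y_{n,b}$ with constant term $1$ and $\BZ_{(p)}$-coefficients, since $p\nmid n$.

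Comparing degrees, $(w_\mu,b_\lambda)\neq0$ forces $|\mu^i|=|\lambda^i|+d_i$ with $d_i\ge 0$, and $p^r|\lambda^n|=p^r|\mu^n|+\sum_i d_i$.
\begin{itemize}
\item Hence $|\lambda^n|\ge|\mu^n|$.
\item If $|\lambda^n|=|\mu^n|$, then every $d_i=0$ and only the constant term of the middle factor contributes.
\item In that case the pairing factorizes into the Cauchy dualities of \cite[Proposition 3.6]{DG} in slots $1,\ldots,n-1$ and Lemma \ref{lem:3.11} in slot $n$, giving $\delta_{\lambda,\mu}$.
\end{itemize}
So in each $L\times\BN$-degree the Gram matrix is block unitriangular with respect to $|\lambda^n|$, which is exactly what your ``decreasing order'' step needs. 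The vectors $s_{-\zeta_i,m}$ are defined in $V_{L,\BF}$ by the $\BZ_{(p)}$-integrality argument of \cite[Lemma 4]{Mu}, since $p\nmid n$. This completes your proof.

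\textbf{Comparison with the paper.} The paper never meets cross terms. Instead of test vectors in every slot, it applies the annihilation operators $r_{\lambda,p^r}(-\xi)$. These preserve the radical, and because $\langle\xi,\alpha_i\rangle=0$ for $i<n$ they commute with the $s_{\alpha_i,m}$ and act only on the $\alpha_n$-slot. Applying $r_{\lambda^n_{\max},p^r}(-\xi)$, with $\lambda^n_{\max}$ of maximal size, strips that slot off. What remains is a radical element of $V_{L',\BF}$ with $L'$ of type $A_{n-1}$, and it vanishes by \cite[Theorem 5]{Mu} because $\det G_{L'}=n\not\equiv0\pmod p$. The paper's downward induction on $|\lambda^n_{\max}|$ is the same triangularity as yours.

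The trade-off is this. The paper needs only the $\xi$-duality and treats nondegeneracy on $V_{L',\BF}$ as a black box. Your argument is self-contained: it effectively reproves that nondegeneracy inline with the dual vectors $s_\mu(-\zeta_i)$ and gives an explicit unitriangular Gram matrix on $V_{L,\BF}/I_L$. The cost is the cross-term bookkeeping and the integrality check for the $\zeta_i$.
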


\begin{proof}
Lemmas \ref{lem:4.4} and \ref{lem:I-L-ideal} show that $I_{L}$ is an 
ideal contained in $\Rad(\cdot,\cdot)_{V_{L,\BF}}$.
Now we prove that $I_{L}$ in fact equals $\Rad(\cdot,\cdot)_{V_{L,\BF}}$.

By Proposition \ref{prop:s-chur-basis}, we see that
$V_{L,\BF}/I_{L}$ is spanned by the elements in \eqref{eq:V-L-quo-bas-3}.
Assume that
\begin{equation}\label{eq:sum-rad}
	\sum_{\lambda^1,\ldots,\lambda^n\in \mathcal{P},\alpha\in L}c_{\lambda^1,\ldots,\lambda^n}^{\alpha}s_{\lambda^1}(\alpha_1)\cdots s_{\lambda^{n-1}}(\alpha_{n-1})s_{\lambda^n,p^r}(\alpha_n)\iota(e_{\alpha})\in \Rad(\cdot,\cdot)_{V_{L,\BF}},
\end{equation}
where $c_{\lambda^1,\ldots,\lambda^n}^{\alpha}\in \BF$.
For $\lambda=(\lambda_1,\ldots,\lambda_{t})\in \mathcal{P}$, 
we write $|\lambda|=\lambda_1+\cdots+\lambda_t$.
For fixed $\alpha\in L$ and $m\in\BN$, the element
\begin{equation}\label{eq:L-times-Z-ele}
\sum_{\substack{\lambda^1,\ldots,\lambda^n\in \mathcal{P}\\|\lambda^1|+\cdots+|\lambda^{n-1}|+p^r|\lambda^n|=m}}c_{\lambda^1,\ldots,\lambda^n}^{\alpha}s_{\lambda^1}(\alpha_1)\cdots s_{\lambda^{n-1}}(\alpha_{n-1})s_{\lambda^n,p^r}(\alpha_n)\iota(e_{\alpha})
\end{equation}
is homogeneous with respect to the $L\times \BN$-grading.
By Lemma \ref{lem:some-facts-Rad}(ii) and \eqref{eq:sum-rad}, we have
\begin{equation*}
\sum_{\substack{\lambda^1,\ldots,\lambda^n\in \mathcal{P}\\|\lambda^1|+\cdots+|\lambda^{n-1}|+p^r|\lambda^n|=m}}c_{\lambda^1,\ldots,\lambda^n}^{\alpha}s_{\lambda^1}(\alpha_1)\cdots s_{\lambda^{n-1}}(\alpha_{n-1})s_{\lambda^n,p^r}(\alpha_n)\iota(e_{\alpha})\in \Rad(\cdot,\cdot)_{V_{L,\BF}}
\end{equation*}
for $m\in \BN$ and $\alpha\in L$.

For each $\lambda\in \mathcal{P}$, since $s_{\lambda,p^r}(-\xi)$ 
is a well-defined element of $V_{L,\BF}$, 
it follows from Lemma \ref{DG3.3}(i) that $s_{\lambda,p^r}(-\xi)$ 
is a linear combination of elements in $\mathcal{S}$.
Then by Lemma \ref{lem:some-facts-Rad}(i), we see that
$\Rad(\cdot,\cdot)_{V_{L,\BF}}$ is closed under the action of $r_{\lambda,p^r}(-\xi)$.
Furthermore, by Lemma \ref{lem:3.11} and \eqref{eq:r-s-close}, we have
\begin{equation*}
	\delta_{\lambda,\lambda'}=(s_{\lambda,p^r}(-\xi), s_{\lambda',p^r}(\alpha_n))_{V_{L,\BF}}=(\mathbf{1},r_{\lambda,p^r}(-\xi)s_{\lambda',p^r}(\alpha_n))_{V_{L,\BF}}
\end{equation*}
for $\lambda,\lambda'\in \mathcal{P}$.
For $\lambda,\lambda'\in \mathcal{P}$ with $|\lambda|=|\lambda'|$, 
since $\deg r_{\lambda,p^r}(-\xi)=-p^r|\lambda|$ and $\deg s_{\lambda',p^r}(\alpha_n)=p^r|\lambda'|$, 
we have $r_{\lambda,p^r}(-\xi)s_{\lambda',p^r}(\alpha_n)\in \BF\mathbf{1}$, 
and thus
\begin{equation}\label{eq:r-lam-s-lam'=del-lam-lam'}
	r_{\lambda,p^r}(-\xi)s_{\lambda',p^r}(\alpha_n)=\begin{cases}
		\mathbf{1}&\text{if }\lambda=\lambda',\\
		0&\text{if }\lambda\ne\lambda'.
	\end{cases}
\end{equation}
Choose a partition $\lambda^n_{\max}$ of maximal size among the partitions $\lambda^n$
appearing in \eqref{eq:L-times-Z-ele}.
Since $\langle \xi,\alpha_i\rangle=0$ for $i=1,\ldots, n-1$, 
and $r_{-\xi,l}\iota(e_{\alpha})=0$ for $l\in \BZ_+$,
it follows from \eqref{eq:rs-sr} and 
\eqref{eq:r-lam-s-lam'=del-lam-lam'} that
\begin{align*}
	r_{\lambda^n_{\max},p^{r}}(-\xi)\Big(\sum_{\substack{\lambda^1,\ldots,\lambda^n\in \mathcal{P}\\|\lambda^1|+\cdots+|\lambda^{n-1}|+p^r|\lambda^n|=m}}c_{\lambda^1,\ldots,\lambda^n}^{\alpha}s_{\lambda^1}(\alpha_1)\cdots s_{\lambda^{n-1}}(\alpha_{n-1})s_{\lambda^n,p^r}(\alpha_n)\iota(e_{\alpha})\Big)\\
=\sum_{\substack{\lambda^1,\ldots,\lambda^{n-1}\in \mathcal{P},\lambda^n=\lambda^n_{\max}\\|\lambda^1|+\cdots+|\lambda^{n-1}|+p^r|\lambda^n|=m
}}c_{\lambda^1,\ldots,\lambda^n}^{\alpha}s_{\lambda^1}(\alpha_1)\cdots s_{\lambda^{n-1}}(\alpha_{n-1})\iota(e_{\alpha})\in \Rad(\cdot,\cdot)_{V_{L,\BF}}.
\end{align*}
By Lemma \ref{lem:some-facts-Rad}(iii), we have
\begin{equation}\label{eq:sum-in-Rad}
\sum_{\substack{\lambda^1,\ldots,\lambda^{n-1}\in \mathcal{P},\lambda^n=\lambda^n_{\max}\\|\lambda^1|+\cdots+|\lambda^{n-1}|+p^r|\lambda^n|=m
}}c_{\lambda^1,\ldots,\lambda^n}^{\alpha}s_{\lambda^1}(\alpha_1)\cdots s_{\lambda^{n-1}}(\alpha_{n-1})\in \Rad(\cdot,\cdot)_{V_{L,\BF}}.
\end{equation}
Let $L'=\BZ\alpha_1\oplus \cdots \oplus \BZ\alpha_{n-1} (\subset L)$, 
and let $(\cdot,\cdot)_{V_{L',\BF}}$ be the restriction 
of $(\cdot,\cdot)_{V_{L,\BF}}$ to $V_{L',\BF}$.
Note that
\begin{equation*}
	\sum_{\substack{\lambda^1,\ldots,\lambda^{n-1}\in \mathcal{P},\lambda^n=\lambda^n_{\max}\\|\lambda^1|+\cdots+|\lambda^{n-1}|+p^r|\lambda^n|=m
	}}c_{\lambda^1,\ldots,\lambda^n}^{\alpha}s_{\lambda^1}(\alpha_1)\cdots s_{\lambda^{n-1}}(\alpha_{n-1})\in V_{L',\BF}.
\end{equation*}
As $V_{L',\BF}$ is a subalgebra of $V_{L,\BF}$, 
we deduce from \eqref{eq:sum-in-Rad} that
\begin{equation}\label{eq:m-gad-in-M(1)-rad}
\sum_{\substack{\lambda^1,\ldots,\lambda^{n-1}\in \mathcal{P},\lambda^n=\lambda^n_{\max}\\|\lambda^1|+\cdots+|\lambda^{n-1}|+p^r|\lambda^n|=m
}}c_{\lambda^1,\ldots,\lambda^n}^{\alpha}s_{\lambda^1}(\alpha_1)\cdots s_{\lambda^{n-1}}(\alpha_{n-1})\in \Rad(\cdot,\cdot)_{V_{L',\BF}}.
\end{equation}
Since $\det G_{L'}=n\not\equiv 0\pmod{p}$, 
it follows from \cite[Theorem 5]{Mu} that 
\begin{equation}\label{eq:rad=0}
\Rad(\cdot,\cdot)_{V_{L',\BF}}=0.
\end{equation}
Combining \eqref{eq:m-gad-in-M(1)-rad} with \eqref{eq:rad=0}, we have
\begin{equation*}
		\sum_{\substack{\lambda^1,\ldots,\lambda^{n-1}\in \mathcal{P},|\lambda^n|=|\lambda^n_{\max}|\\|\lambda^1|+\cdots+|\lambda^{n-1}|+p^r|\lambda^n|=m
	}}c_{\lambda^1,\ldots,\lambda^n}^{\alpha}s_{\lambda^1}(\alpha_1)\cdots s_{\lambda^{n-1}}(\alpha_{n-1})\iota(e_{\alpha})=0.
\end{equation*}
Since there are only finitely many $\lambda^n\in\mathcal{P}$ with $p^r|\lambda^n|\le m$,
we may repeat this argument finitely many times and finally obtain
\begin{equation*}
	\sum_{\substack{\lambda^1,\ldots,\lambda^n\in \mathcal{P}\\|\lambda^1|+\cdots+|\lambda^{n-1}|+p^r|\lambda^n|=m}
	}c_{\lambda^1,\ldots,\lambda^n}^{\alpha}s_{\lambda^1}(\alpha_1)\cdots s_{\lambda^{n-1}}(\alpha_{n-1})s_{\lambda^n,p^r}(\alpha_n)\iota(e_{\alpha})=0
\end{equation*} 
for $m\in \BN$ and $\alpha\in L$.
Consequently,
\begin{equation*}
	\sum_{\lambda^1,\ldots,\lambda^n\in \mathcal{P},\alpha\in L
	}c_{\lambda^1,\ldots,\lambda^n}^{\alpha}s_{\lambda^1}(\alpha_1)\cdots s_{\lambda^{n-1}}(\alpha_{n-1})s_{\lambda^n,p^r}(\alpha_n)\iota(e_{\alpha})=0.
\end{equation*} 
Therefore, $\Rad(\cdot,\cdot)_{V_{L,\BF}}=I_{L}$.
\end{proof}

We end this subsection by considering the irreducible modules 
of $V_{L,\BF}/\Rad(\cdot,\cdot)_{V_{L,\BF}}$.
For $\beta\in L^*$, we analogously define a symmetric bilinear 
form $(\cdot,\cdot)_{\beta}$ on $V_{\beta+L,\BF}$ by the following conditions:
\begin{enumerate}[(1)]
\item $(\iota(e_{\beta+\alpha}), \iota(e_{\beta+\alpha'}))_{\beta}=\delta_{\alpha+\alpha',0}$ for $\alpha,\alpha'\in L$.
\item $(s_{\alpha,m}w,w')_{\beta}=(w,r_{\alpha,m}w')_{\beta}$ for $w,w'\in V_{\beta+L,\BF}$, $\alpha\in L$, and $m\in \BN$.
\end{enumerate}
Then $\Rad(\cdot,\cdot)_{\beta}$ is defined in the same way.
Under the natural vector-space identification 
\begin{align*}
    T_\beta:~~~~~V_{L,\BF}&\to V_{\beta+L,\BF}\\
    h\iota(e_\alpha)&\mapsto h\iota(e_{\beta+\alpha}),
\end{align*}
where $h\in M(1)_\BF$ and $\alpha\in L$,
the same arguments as for $\Rad(\cdot,\cdot)_{V_{L,\BF}}$ yield the following result.

\begin{proposition}\label{pro:V-beta-Rad}
\begin{enumerate}[(i)]
\item For $\beta\in L^*$, $\Rad(\cdot,\cdot)_{\beta}$ is spanned by elements of the form
\begin{equation*}
    s_{\alpha^1,m_1}\ldots s_{\alpha^k,m_k}s_{\eta,m}\iota(e_{\beta+\alpha}),
\end{equation*}
where $\alpha\in L$, $\alpha^i\in \Pi$, $m_i\in \BZ_+$, $k\in \BN$, and $m\in \BZ_+$ with $p^r\nmid m$.

\item $\Rad(\cdot,\cdot)_{\beta}$ is the $V_{L,\BF}$-submodule of $V_{\beta+L,\BF}$ 
generated by $\{s_{\eta,m}\iota(e_{\beta})\mid m\in \BZ_+\text{ with }p^r\nmid m\}$.
\end{enumerate}
\end{proposition}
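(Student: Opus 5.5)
Let $I_\beta$ denote the span of the vectors in (i), and $U_\beta$ the $V_{L,\BF}$-submodule of $V_{\beta+L,\BF}$ generated by $\{s_{\eta,m}\iota(e_\beta)\mid m\in\BZ_+,\ p^r\nmid m\}$. I will transport the three steps for $\Rad(\cdot,\cdot)_{V_{L,\BF}}$ (Lemma \ref{lem:4.4}, Lemma \ref{lem:I-L-ideal}, Theorem \ref{th:I-L}) through $T_\beta$. First I record the analogue of Lemma \ref{lem:some-facts-Rad}. Part (i) holds because $(s_{\alpha,m}w,w')_\beta=(w,r_{\alpha,m}w')_\beta$ and $r_{\alpha,m}$, $s_{\alpha,m}$ preserve $V_{\beta+L,\BF}$. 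Part (ii) holds since the form pairs the $(\beta+\alpha)$-component only with the $(\beta-\alpha)$-component, after shifting $\BN$-degrees. For part (iii), observe that $(u\iota(e_{\beta+\alpha}),v\iota(e_{\beta+\alpha'}))_\beta=\delta_{\alpha+\alpha',0}\,(u,\tilde v)$, where $\tilde v$ is obtained by moving the $r$'s through the exponential part. This gives $u\iota(e_{\beta+\alpha})\in\Rad(\cdot,\cdot)_\beta$ iff $u\in\Rad(\cdot,\cdot)_{V_{L,\BF}}\cap M(1)_\BF$. To make this precise, I use that $r_{\gamma,m}$ acts on $M(1)_\BF\otimes\iota(e_{\beta+\alpha})$ exactly as on $M(1)_\BF\otimes\iota(e_{\alpha})$ for $\gamma\in L$, because $r_{\gamma,m}$ only involves $\gamma(j)$ with $j>0$.

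For the inclusion $I_\beta\subset\Rad(\cdot,\cdot)_\beta$, the computation in the proof of Lemma \ref{lem:4.4} goes through verbatim. Only $\langle\eta,\alpha_i\rangle$ and Lucas' theorem enter, and the lattice part just contributes $\delta_{\alpha+\alpha',0}$.

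For the reverse inclusion, I take a vector in $\Rad(\cdot,\cdot)_\beta$ and expand it modulo $I_\beta$ in the basis \eqref{eq:V-L-quo-bas-3} with $\iota(e_\gamma)$ replaced by $\iota(e_{\beta+\gamma})$. This basis is valid because Proposition \ref{prop:s-chur-basis} concerns only the $M(1)_\BF$ factor. Next I project to $(L\times\BN)$-homogeneous pieces and apply $r_{\lambda^n_{\max},p^r}(-\xi)$. The needed relation $\langle\xi,\alpha_i\rangle=0$ for $i<n$ is unchanged. However, $r_{-\xi,l}\iota(e_{\beta+\alpha})=0$ still holds, since $r$'s annihilate every $\iota(e_\delta)$. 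So I again reduce to an element $u\iota(e_{\beta+\alpha})$ with $u\in M(1)_{L',\BF}$ lying in the radical. By the $M(1)$-part criterion above, $u\in\Rad(\cdot,\cdot)_{V_{L',\BF}}=0$, and the finite descent then concludes $\Rad(\cdot,\cdot)_\beta=I_\beta$. This proves (i).

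For (ii), I repeat the proof of Lemma \ref{lem:I-L-ideal} applied to $Y(\iota(e_{\gamma_1}),x_1)\cdots Y(\iota(e_{\gamma_k}),x_k)s_{\eta,m}\iota(e_\beta)$. The $E^+$ factors act on $s_{\eta,m}\iota(e_\beta)$ exactly as in \eqref{eq:E+-s-eta}. The only new ingredients are the monomials $x_i^{\langle\gamma_i,\beta\rangle}$ and sign factors coming from $e_{\gamma_i}$ acting on $\iota(e_\beta)$. These are units, and since $\langle\gamma_i,\beta\rangle\in\BZ$ they do not affect coefficient extraction. Hence the triangular induction on $b$, with $bp^r<m<(b+1)p^r$, gives $U_\beta=I_\beta$. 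The main obstacle I expect is the careful justification of the $M(1)$-part criterion (iii) for $V_{\beta+L,\BF}$, in particular checking that the form on $M(1)_\BF\otimes\iota(e_{\beta+\alpha})$ really agrees with that on $M(1)_\BF$. I would verify this by writing $(h\iota(e_{\beta+\alpha}),h'\iota(e_{\beta-\alpha}))_\beta$ as $(\iota(e_{\beta+\alpha}),h^\dagger h'\iota(e_{\beta-\alpha}))_\beta$, where $h^\dagger$ is the corresponding product of $r$'s.
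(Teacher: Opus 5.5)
Your proposal is correct and follows the paper's route: the paper also just transports the proofs of Lemma \ref{lem:4.4}, Lemma \ref{lem:I-L-ideal} and Theorem \ref{th:I-L} through $T_\beta$. Your key observation is that $s_{\gamma,m}$ and $r_{\gamma,m}$ ($\gamma\in L$) act only on the $M(1)_\BF$ factor, so $T_\beta$ is an isometry. This alone gives (i) directly as $\Rad(\cdot,\cdot)_\beta=T_\beta(\Rad(\cdot,\cdot)_{V_{L,\BF}})=T_\beta(I_L)$, and (ii) then needs only the extra unit factors $\prod_i x_i^{\langle\gamma_i,\beta\rangle}$ (and signs) that you already identified.
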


Let $L$ be the root lattice of type $A_n$, and let $(b_{ij})$ 
be the adjugate matrix of $G_{L}$. 
For $1\le i\le n$, define
\begin{equation}\label{eq:beta^i}
    \beta^i=\frac{1}{\det G_{L}}\sum_{j=1}^{n}b_{ij}\alpha_{j}.
\end{equation}
By the proof of \cite[Theorem 5]{Mu}, 
the elements $\beta^1, \ldots,\beta^{n}$ form a basis of $L^*$ 
dual to $\alpha_1,\ldots,\alpha_n$.
Hence 
\begin{equation}\label{eq:eta-beta^i}
    \langle\eta, \beta^i\rangle=i\quad\text{ for }1\le i\le n.
\end{equation}
On the other hand, by \eqref{eq:eta-alpha_i},
we have 
\begin{equation}\label{eq:eta-alpha}
    n+1\mid \langle\eta,\alpha\rangle\quad\text{ for every }\alpha\in L.
\end{equation}
Hence, $p^r\mid\langle\eta,\alpha\rangle$ for every $\alpha\in L$.
Consequently, for each coset $\beta+L\in L^*/L$, the condition
\begin{equation*}
p^r\mid\langle\eta,\beta\rangle
\end{equation*}
is independent of the choice of the representative $\beta$.

\begin{lemma}\label{lem:coset-reps-L}
$\{0,\beta^1,\ldots,\beta^n\}$ is a complete set of coset 
representatives for $L^*/L$.
\end{lemma}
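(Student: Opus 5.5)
Since $|L^*/L|=\det G_L=n+1$, the lemma follows once the $n+1$ elements $0,\beta^1,\ldots,\beta^n$ are shown to lie in pairwise distinct cosets of $L$. The index $|L^*/L|=\det G_L$ was already used in the proof of Theorem \ref{th:clas-type-ADE}, and it holds here with $\det G_L=n+1$ for type $A_n$, even though now $p\mid n+1$. The property of $\beta^i$ we need is that it lies in $L^*$ and is the dual basis vector, $\langle\beta^i,\alpha_j\rangle=\delta_{ij}$.

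To distinguish the cosets, we use a $\BZ/(n+1)\BZ$-valued invariant, namely the pairing with $\eta$. By \eqref{eq:eta-alpha}, $n+1$ divides $\langle\eta,\alpha\rangle$ for every $\alpha\in L$. Hence the map
\begin{equation*}
L^*\to \BQ/(n+1)\BZ,\qquad \beta\mapsto \langle\eta,\beta\rangle+(n+1)\BZ
\end{equation*}
vanishes on $L$, and so it descends to a well-defined map on $L^*/L$. The values it takes are integers because $\eta\in L$ and $\beta\in L^*$. By \eqref{eq:eta-beta^i}, $\langle\eta,\beta^i\rangle=i$, while $\langle\eta,0\rangle=0$. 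The residues $0,1,\ldots,n$ are pairwise distinct modulo $n+1$. Therefore no two of $0,\beta^1,\ldots,\beta^n$ lie in the same coset.

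We now count: there are $n+1$ distinct cosets in a group of order $n+1$, so these elements form a complete set of representatives. If one wants to avoid quoting the index formula, one can instead show surjectivity directly. For this, observe that $L^*=\bigoplus_i\BZ\beta^i$. Moreover, $\beta^i\equiv i\beta^1\pmod L$, which can be checked using $\beta^i-i\beta^1\in L$ from the explicit form of the fundamental weights of $A_n$. It follows that $L^*/L$ is cyclic, generated by $\beta^1$, of order at most $n+1$.

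The only real point needing care is the well-definedness of the $\eta$-pairing modulo $n+1$ on cosets, and this is already supplied by \eqref{eq:eta-alpha}. No step is a genuine obstacle.
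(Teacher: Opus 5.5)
Your argument is correct and is essentially the paper's proof: you separate the $n+1$ cosets using $\langle\eta,\cdot\rangle$ modulo $n+1$ via \eqref{eq:eta-beta^i} and \eqref{eq:eta-alpha}, then conclude from the count $|L^*/L|=\det G_L=n+1$. Phrasing the invariant as a homomorphism $L^*/L\to\BZ/(n+1)\BZ$ also handles $0$ versus $\beta^i$ uniformly, where the paper simply notes $\beta^i\notin L$; your optional surjectivity addendum is not needed.
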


\begin{proof}
Note that $\beta^i\notin L$ for all $1\le i\le n$.
If $\beta^i-\beta^j\in L$ for some $1\le i,j\le n$,
then by \eqref{eq:eta-beta^i} and \eqref{eq:eta-alpha}, we have $i=j$.
Therefore, $0,\beta^1, \ldots,\beta^{n}$ represent $n+1$ 
distinct cosets in $L^*/L$.
On the other hand, as $|L^*/L|=\det G_L= n+1$, the assertion holds.
\end{proof}

Denote 
\begin{equation}\label{eq:W-i}
   W_0=V_{L,\BF}/\Rad(\cdot,\cdot)_{V_{L,\BF}} \quad\text{ and }\quad W_i=V_{\beta^i+L,\BF}/\Rad(\cdot,\cdot)_{\beta^i} \text{ for }1\le i\le n,
\end{equation}
where $\beta^i$ are defined in \eqref{eq:beta^i}.
Using the arguments of \cite[Proposition 3.6 and Theorem 3.8]{ZM}, 
together with Lemma \ref{lem:coset-reps-L}, we obtain the following result.

\begin{theorem}\label{th:certain-irre-mod}
The modules $W_i$, where $i=0,1,\ldots,n$, are pairwise inequivalent
irreducible $V_{L,\mathbb F}$-modules.
\end{theorem}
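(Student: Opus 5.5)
We prove separately that each $W_i$ is a well-defined $V_{L,\BF}$-module, that it is irreducible, and that distinct $W_i$ are inequivalent.

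\textbf{Well-definedness.} By Proposition \ref{pro:V-beta-Rad}(ii), $\Rad(\cdot,\cdot)_{\beta^i}$ is a $V_{L,\BF}$-submodule of $V_{\beta^i+L,\BF}$. Hence $W_i$ is a $V_{L,\BF}$-module. It inherits the $L\times\BN$-grading, since $\Rad(\cdot,\cdot)_{\beta^i}$ is spanned by $L\times\BN$-homogeneous vectors.

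\textbf{Irreducibility.} We follow \cite[Proposition 3.6]{ZM}, the same argument as for Lemma \ref{lem:simp-quot-latt}. Let $N\subset W_i$ be a nonzero submodule and take a nonzero vector $w\in N$.
\begin{enumerate}[(1)]
\item The operators $r_{\alpha,m}$ are modes of elements of $M(1)_{\BF}\subset V_{L,\BF}$ (they can be extracted from $Y(s_{\alpha,m},x)$ and $Y(\iota(e_{\pm\alpha}),x)$). Projecting onto $L$-graded components, we may therefore assume $w$ has fixed $L$-degree $\beta^i+\alpha$.
\item Because the induced form on $W_i$ is nondegenerate, there is $v$ with $(v,w)_{\beta^i}\neq 0$. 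Write $v=h\,\iota(e_{\beta^i+\alpha'})$ with $h\in M(1)_\BF$. Then $(v,w)_{\beta^i}=(\iota(e_{\beta^i+\alpha'}),h^{\dagger}w)$, where $h^\dagger$ is the corresponding product of $r$'s.
\item Hence the component of $h^{\dagger}w$ along $\iota(e_{\beta^i-\alpha'})$ is nonzero. Projecting onto the $\BN$-degree-$\langle\cdot\rangle$ lowest component gives $\iota(e_{\beta^i+\alpha''})\in N$ modulo the radical.
\item Applying $\iota(e_{\gamma})_{k}$ for suitable $k$ moves $\iota(e_{\beta^i+\alpha''})$ to any $\iota(e_{\beta^i+\gamma'})$, and the $s_{\alpha,m}$, being modes of $V_{L,\BF}$ elements, then generate all of $W_i$.
\end{enumerate}
The main obstacle is making the operators $r_{\alpha,m}$ and $s_{\alpha,m}$ available on modules in characteristic $p$. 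For this we use Proposition \ref{prop:Mu-Heisen-gen} and \eqref{eq:Y-s-alpha-n}, and reproduce the induction of \cite{ZM}, which shows that the submodule generated by any nonzero vector contains some $\iota(e_{\beta^i+\gamma})$. The point to check is that nondegeneracy modulo $\Rad(\cdot,\cdot)_{\beta^i}$ replaces the hypothesis $\det G_L\not\equiv0$ used there.

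\textbf{Inequivalence.} Suppose $f:W_i\to W_j$ is an isomorphism. For $\alpha\in L$, the Heisenberg zero mode $\alpha(0)$ is $\operatorname{Res}_x Y(s_{\alpha,1},x)$, and it acts on $\iota(e_{\beta+\gamma})$ by $\langle\alpha,\beta+\gamma\rangle \bmod p$. The module map $f$ preserves the set of eigenvalues of $\alpha(0)$, and these eigenvalues are $\langle\alpha,\beta^i\rangle+\langle\alpha,L\rangle$ modulo $p$.

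Take $\alpha=\eta$. Since $\langle\eta,L\rangle\subset (n+1)\BZ\subset p\BZ$, the zero mode $\eta(0)$ acts on all of $W_i$ by the scalar $\langle\eta,\beta^i\rangle=i$ modulo $p$, using \eqref{eq:eta-beta^i} and \eqref{eq:eta-alpha}. This invariant alone separates only the residues of $i$ modulo $p$, which is not enough when $n+1>p$.

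To get a finer invariant, we use the $\BN$-grading and the $L$-grading together. The lowest-weight space of $W_i$ has conformal weight $\min_{\gamma\in\beta^i+L}\langle\gamma,\gamma\rangle/2$, and this weight distinguishes the fundamental cosets $\omega_i$ in type $A$ up to $i\leftrightarrow n+1-i$. In addition, the $L$-graded character of the lowest space records the set $\beta^i+L$ modulo $pL^*$ through the action of all the $\alpha_j(0)$. Combining these invariants determines the coset $\beta^i+L$, because the vectors $\beta^1,\dots,\beta^n$ are separated by the values $\langle\alpha_j,\beta^i\rangle=\delta_{ij}$ on the lowest vectors $\iota(e_{\beta^i})$, which $f$ must match up to $L$-shifts. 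This is the same argument as in \cite[Theorem 3.8]{ZM}, combined with Lemma \ref{lem:coset-reps-L}.
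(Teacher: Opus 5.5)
\textbf{Gap in the inequivalence argument.} Your well-definedness and irreducibility steps follow the paper's route. The paper simply invokes the argument of \cite[Proposition 3.6]{ZM}, with nondegeneracy of the induced form on $V_{\beta^i+L,\BF}/\Rad(\cdot,\cdot)_{\beta^i}$ replacing the hypothesis $\det G_L\not\equiv 0\pmod p$. The inequivalence part, however, has a genuine gap: none of your invariants separates the $W_i$.
\begin{itemize}
\item The residues of the zero modes $\alpha(0)$, $\alpha\in L$, on vectors of $L$-degree in $\beta^i+L$ determine only the coset $\beta^i+L+pL^*$. Since $L^*/L\cong\BZ/(n+1)\BZ$ with $p\mid n+1$, we have $L^*/(L+pL^*)\cong\BZ/p\BZ$. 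So your ``$L$-graded character'' carries exactly the same information as the $\eta(0)$-eigenvalue, namely $i\bmod p$.
\item The conformal weight is not available at all. The vector $\omega=\frac12\sum_j\alpha_j(-1)\beta^j(-1)\mathbf{1}$ has coefficients with denominator $n+1\equiv0\pmod p$; for example, the coefficient of $\alpha_1(-1)^2\mathbf{1}$ is $n/(2(n+1))$. So $\omega$ does not reduce to $V_{L,\BF}$. The number $\langle\beta^i,\beta^i\rangle/2=i(n+1-i)/(2(n+1))$ is not even $p$-integral.
\item The $\BN$-grading on $W_i$ is imposed by hand and normalized to start in degree $0$, so an abstract isomorphism need not respect it.
\item Your last sentence is circular: matching the lowest vectors only ``up to $L$-shifts'' is precisely where the information is lost.
\end{itemize}
For instance, when $n+1=2p$, nothing in your argument distinguishes $W_1$ from $W_{p+1}$. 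In the setting of \cite[Theorem 3.8]{ZM} one even has $L+pL^*=L^*$, so residues mod $p$ carry no information there. The argument the paper imports therefore recovers the coset $\beta^i+L$ exactly, and Lemma~\ref{lem:coset-reps-L} then finishes.

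\textbf{A fix using integer exponents.} What you need is an invariant with values in $\BZ$ rather than $\BF$, and the exponents of $x$ in $Y(\iota(e_\alpha),x)$ provide one.
\begin{enumerate}
\item Let $f\colon W_i\to W_j$ be an isomorphism and put $w'=f(\bar\iota(e_{\beta^i}))$, where bars denote images in the quotients. Since $Y(\iota(e_\alpha),x)\bar\iota(e_{\beta^i})=x^{\langle\alpha,\beta^i\rangle}E^-(-\alpha,x)e_\alpha\bar\iota(e_{\beta^i})$, we get $\iota(e_\alpha)_m w'=0$ for all $m\ge-\langle\alpha,\beta^i\rangle$.
\item Because $\iota(e_\alpha)_m$ shifts $L$-degree by $\alpha$, the same vanishing holds for each nonzero $L$-homogeneous component $w'_\gamma$ of $w'$, of degree $\beta^j+\gamma$.
\item Let $b$ be maximal with $r_{\alpha,b}w'_\gamma\neq0$. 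The formula $Y(\iota(e_\alpha),x)=E^-(-\alpha,x)E^+(-\alpha,x)e_\alpha x^{\alpha}$ shows that the coefficient of $x^{\langle\alpha,\beta^j+\gamma\rangle-b}$ in $Y(\iota(e_\alpha),x)w'_\gamma$ is $e_\alpha r_{\alpha,b}w'_\gamma$. This vector is nonzero: $e_\alpha$ and $r_{\alpha,b}$ preserve $\Rad(\cdot,\cdot)_{\beta^j}$ by Proposition~\ref{pro:V-beta-Rad}(i) and the analogue of Lemma~\ref{lem:some-facts-Rad}(i), and $e_\alpha$ is bijective.
\item Combining 2 and 3 gives $\langle\alpha,\beta^i\rangle\le\langle\alpha,\beta^j+\gamma\rangle-b\le\langle\alpha,\beta^j+\gamma\rangle$ for every $\alpha\in L$. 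Replacing $\alpha$ by $-\alpha$ gives equality, so $\beta^i=\beta^j+\gamma$, and $i=j$ by Lemma~\ref{lem:coset-reps-L}.
\end{enumerate}

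\textbf{The same device repairs your step (1).} That step also cannot be done with zero modes in characteristic $p$. Take $u=\sum_\gamma u_\gamma$ in a submodule. Pick $\alpha\in L$ with $\langle\alpha,\cdot\rangle$ injective on the finite support, and let $k$ exceed the top $M(1)$-degree of $u$. Then the lowest nonzero coefficient of $Y(\iota(e_{k\alpha}),x)u$ comes from a single component, which gives a nonzero $L$-homogeneous vector in the submodule.
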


\begin{theorem}\label{th:pr-mid-beta-quot-mod}
The modules $W_{jp^r}$, where $j=0,1,\ldots,a-1$, are pairwise
inequivalent irreducible modules over the quotient vertex algebra
$V_{L,\mathbb F}/\Rad(\cdot,\cdot)_{V_{L,\mathbb F}}$.
\end{theorem}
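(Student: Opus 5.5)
Since Theorem~\ref{th:certain-irre-mod} already gives that the $W_i$ are pairwise inequivalent irreducible $V_{L,\BF}$-modules, the plan reduces to one claim: for $i=jp^r$ the ideal $\Rad(\cdot,\cdot)_{V_{L,\BF}}=I_L$ (Theorem~\ref{th:I-L}) annihilates $W_i$. Then $W_i$ is a module for the quotient $V_{L,\BF}/\Rad(\cdot,\cdot)_{V_{L,\BF}}$. It stays irreducible, because its submodules over the quotient are exactly its $V_{L,\BF}$-submodules. Inequivalence passes down for the same reason. So everything hinges on showing $I_L\cdot W_{jp^r}=0$, i.e.
\[
Y(v,x)V_{\beta^{i}+L,\BF}\subset \Rad(\cdot,\cdot)_{\beta^{i}}((x))\qquad\text{for all } v\in I_L,\ i=jp^r .
\]
For $j=0$ this is just the fact that $I_L$ is an ideal (Lemma~\ref{lem:I-L-ideal}).

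First I would reduce to generators. By Lemma~\ref{lem:I-L-ideal}, $I_L$ is the ideal generated by the elements $s_{\eta,m}$ with $p^r\nmid m$. The annihilator in $V_{L,\BF}$ of the module $W_i$ is an ideal. So it suffices to show that each such $s_{\eta,m}$ acts as zero on $W_i$, that is,
\[
Y(s_{\eta,m},x)\,V_{\beta^i+L,\BF}\subset \Rad(\cdot,\cdot)_{\beta^i}((x)).
\]
To compute this action I would use the generating function recalled before Proposition~\ref{prop:Mu-Heisen-gen}, with $\alpha=\eta$:
\[
\sum_{n}Y(s_{\eta,n},x)x_0^n=E^{-}(-\eta,x+x_0)E^{-}(\eta,x)E^{+}(\eta,x)E^{+}(-\eta,x+x_0)\Big(\frac{x+x_0}{x}\Big)^{\eta}.
\]
On $V_{\beta^i+L,\BF}$ the factor $\big(\frac{x+x_0}{x}\big)^{\eta}$ acts on $u\otimes\iota(e_{\beta^i+\alpha})$ as $(1+x_0/x)^{\langle\eta,\beta^i+\alpha\rangle}$. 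By \eqref{eq:eta-beta^i} and \eqref{eq:eta-alpha}, the exponent is $i+(n+1)k=jp^r+ap^rk$, which is divisible by $p^r$. In characteristic $p$ this gives $(1+x_0/x)^{p^rN}=(1+x_0^{p^r}/x^{p^r})^N$, a series in $x_0^{p^r}$ only. The factor $E^{-}(-\eta,x+x_0)E^-(\eta,x)$ expands in $x_0$ with coefficients that are polynomials in the $s_{\eta,\cdot}$. The $E^{+}$ factors act by the $r_{\eta,\cdot}$. Since $\langle\eta,\alpha_l\rangle\in\{0,n+1\}$, the relation \eqref{eq:rs-sr} shows that commuting $r_{\eta,\cdot}$ past the basis monomials produces only binomials $\binom{\pm ap^r}{t}$, and these vanish mod $p$ unless $p^r\mid t$.

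The main obstacle is now this step: extract the coefficient of $x_0^m$ with $p^r\nmid m$, and show that every resulting term contains some factor $s_{\eta,m'}$ with $p^r\nmid m'$, or otherwise lies in $\Rad(\cdot,\cdot)_{\beta^i}$ by Proposition~\ref{pro:V-beta-Rad}(i). A single $s_{\eta,m'}$ with $p^r\nmid m'$ acting on any vector already lands in the span described there. So the task is a bookkeeping of $x_0$-degrees mod $p^r$: the total degree $m$ is not divisible by $p^r$, while the $\big(\frac{x+x_0}{x}\big)^{\eta}$ factor contributes only multiples of $p^r$. Hence the $p^r$-indivisible part must come from the $s$- and $r$-factors. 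I would handle this by Lucas-type expansions of $E^{-}(-\eta,x+x_0)$, mirroring the computation \eqref{eq:D-k-s-delta-m} in the proof of Lemma~\ref{lem:I-L-ideal}, and use Lemma~\ref{DG3.3} to rewrite $s_{-\eta,\cdot}$.

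If this direct computation becomes unwieldy, I would fall back on a cleaner route through the form, which also explains the role of the condition $p^r\mid i$. For $w,w'\in V_{\beta^i+L,\BF}$, invariance relates $(Y(s_{\eta,m},x)w,w')_{\beta^i}$ to a pairing of $s_{\eta,m}$ against an element of $V_{L,\BF}$ via the adjoint vertex operator. This has the same shape as the expression for $\big(\mathbf{1},r_{\eta,m}\cdots\big)$ in Lemma~\ref{lem:4.4}, except that an extra factor $(1+\cdot)^{\langle\eta,\beta^i\rangle}$ appears, and it is harmless exactly when $p^r\mid i$. This would give $Y(s_{\eta,m},x)w\in\Rad(\cdot,\cdot)_{\beta^i}((x))$ and finish the proof.
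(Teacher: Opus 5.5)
Your reduction is sound: annihilators of modules are ideals, and $I_L$ is generated by the $s_{\eta,m}$ with $p^r\nmid m$. However, the proposal stops at the one step that carries the content. You show that the factor $\big(\frac{x+x_0}{x}\big)^{\eta}$ contributes only $x_0$-degrees divisible by $p^r$. You then say the rest ``must come from the $s$- and $r$-factors'' and only sketch the $s$-side. It is never explained why the $r_{\eta,q}$ coming from $E^{+}(-\eta,x+x_0)$, acting on an \emph{arbitrary} vector, cannot carry the $p^r$-indivisible part.

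The missing observation is one you nearly state. By \eqref{eq:rs-sr}, \eqref{eq:eta-alpha_i} and $r_{\eta,q}\iota(e_{\gamma})=0$ for $q>0$, moving $r_{\eta,q}$ through a basis monomial produces only binomials $\binom{0}{t}$ and $\binom{ap^r}{t}$. Hence $r_{\eta,q}$ acts as $0$ on all of $V_{\beta+L,\BF}$ whenever $p^r\nmid q$; this is Lemma~\ref{lem:4.4} read from the other side. Now apply \eqref{eq:Y-s-alpha-n} to any $u\otimes\iota(e_{\beta+\alpha})$ with $\beta=\beta^{jp^r}$. It becomes a sum of terms $\binom{N}{m}s_{\eta,k}E^{-}(\eta,x)E^{+}(\eta,x)r_{\eta,q}(u\otimes\iota(e_{\beta+\alpha}))$, where $N=\langle\eta,\beta+\alpha\rangle+k-q$, $p^r\mid q$, and $p^r\mid\langle\eta,\beta+\alpha\rangle$.
\begin{itemize}
\item If $p^r\mid k$, then $p^r\mid N$, and $\binom{N}{m}$ is the coefficient of $y^m$ in $(1+y)^N=(1+y^{p^r})^{N/p^r}$. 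It vanishes since $p^r\nmid m$.
\item If $p^r\nmid k$, the factor $s_{\eta,k}$ puts the term in $\Rad(\cdot,\cdot)_{\beta}$ by Proposition~\ref{pro:V-beta-Rad}(i).
\end{itemize}
That completes your argument.

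So completed, your route is correct and genuinely different from the paper's. The paper computes only $Y(s_{\eta,m},x)\iota(e_{\beta^{jp^r}})$. There every $r_{\eta,q}$ with $q>0$ vanishes, so one gets \eqref{eq:s-eta-m-l-1} without analysing $E^{+}$ at all. It then spreads the conclusion to the whole module using that $V_{\beta+L,\BF}$ is generated by $\iota(e_\beta)$ \cite[Theorem 3.1]{ZM}. This step tacitly uses the standard locality fact that an element killing a generating vector kills the module. Your version pays for handling arbitrary vectors with the vanishing of $r_{\eta,q}$ for $p^r\nmid q$. In return it needs neither cyclicity nor that locality step, only that annihilators are ideals.

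Drop the fallback via the form. $(\cdot,\cdot)_{\beta}$ is defined only by the adjointness $s_{\alpha,m}\leftrightarrow r_{\alpha,m}$, and it pairs the $\beta+\alpha$ component with the $\beta-\alpha$ component. So it is not invariant under the vertex operators of $V_{L,\BF}$. Already for $v=\iota(e_\gamma)$ with $\langle\gamma,\beta\rangle\neq0$, the two sides of the invariance identity carry different powers of $x$, namely $x^{\langle\gamma,\beta+\alpha\rangle}$ versus $x^{\langle\gamma,\alpha-\beta\rangle}$. Hence there is no adjoint-operator identity to invoke.
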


\begin{proof}
By Theorem \ref{th:certain-irre-mod}, it suffices to show that $W_{jp^r}$ is a 
$V_{L,\mathbb F}/\Rad(\cdot,\cdot)_{V_{L,\mathbb F}}$-module for every $0<j\le a-1$.
Let $m\in \BZ_+$ with $p^r\nmid m$.
By \eqref{eq:eta-beta^i}, we have $\langle \eta,\beta^{jp^r}\rangle=jp^r$.
Using \eqref{eq:Y-s-alpha-n} and the fact that 
$r_{\eta,k}\iota(e_{\beta^{jp^r}})=0$ for $k\in \BZ_+$, we obtain
\begin{align*}
Y(s_{\eta,m},x)\iota(e_{\beta^{jp^r}})&=\sum_{k\in \BN}\binom{jp^r+k}{m}s_{\eta,k}E^{-}(\eta,x)\iota(e_{\beta^{jp^r}})x^{k-m}\nonumber\\
&=\sum_{k,i\in \BN}\binom{jp^r+k}{m}s_{\eta,k}s_{-\eta,i}\iota(e_{\beta^{jp^r}})x^{k-m+i}.
\end{align*}
Thus, for every $l\in \BZ$,
\begin{equation}\label{eq:s-eta-m-l-1}
	(s_{\eta,m})_{-l-1}\iota(e_{\beta^{jp^r}})=\sum_{k=0}^{l+m}\binom{jp^r+k}{m}s_{\eta,k}s_{-\eta,l-k+m}\iota(e_{\beta^{jp^r}}).
\end{equation}
For each term in \eqref{eq:s-eta-m-l-1} with $p^r\nmid k$, 
Lemma \ref{DG3.3}(i) shows that $s_{-\eta,l-k+m}$ is a linear combination of 
elements of $\mathcal{S}$.
Then by Proposition \ref{pro:V-beta-Rad}(ii), 
we have
\begin{equation*}
s_{\eta,k}s_{-\eta,l-k+m}\iota(e_{\beta^{jp^r}})\in \Rad(\cdot,\cdot)_{\beta^{jp^r}}.
\end{equation*}
For every term in \eqref{eq:s-eta-m-l-1} with $p^r\mid k$, we obtain $p^r\mid jp^r+k$.
Since $p^r\nmid m$, Lucas' theorem gives $\binom{jp^r+k}{m}\equiv 0\pmod {p}$.
Thus,
\begin{equation*}
\binom{jp^r+k}{m}s_{\eta,k}s_{-\eta,l-k+m}\iota(e_{\beta^{jp^r}})=0\in \Rad(\cdot,\cdot)_{\beta^{jp^r}}.
\end{equation*}
Then, 
\begin{equation*}
    Y(s_{\eta,m},x)\iota(e_{\beta^{jp^r}})\in \Rad(\cdot,\cdot)_{\beta^{jp^r}}((x)).
\end{equation*}
Since $\{s_{\eta,m}\mid m\in\BZ_+\text{ with }p^r\nmid m\}$ 
generates $\Rad(\cdot,\cdot)_{V_{L,\BF}}$ $(=I_L)$ as an ideal of $V_{L,\BF}$,
it follows that $Y(v,x)\iota(e_{\beta^{jp^r}})\in \Rad(\cdot,\cdot)_{\beta^{jp^r}}((x))$
for all $v\in \Rad(\cdot,\cdot)_{V_{L,\BF}}$.
Since \cite[Theorem 3.1]{ZM} shows that
$V_{\beta^{jp^r}+L,\BF}$ is generated by $\iota(e_{\beta^{jp^r}})$
as a $V_{L,\BF}$-module,
we have $Y(v,x)V_{\beta^{jp^r}+L,\BF} \subset \Rad(\cdot,\cdot)_{\beta^{jp^r}}((x))$
for all $v\in \Rad(\cdot,\cdot)_{V_{L,\BF}}$.
Thus $W_{jp^r}$
is a $V_{L,\BF}/\Rad(\cdot,\cdot)_{V_{L,\BF}}$-module.
\end{proof}

\begin{remark}\label{Rmk:quot-mod}
Now we consider $W_i$ for $p^r\nmid i$. Using \eqref{eq:eta-beta^i},
a direct calculation shows that
\begin{equation*}
(s_{\eta,i})_{i-1}\iota(e_{\beta^i})=\iota(e_{\beta^i})\notin \Rad(\cdot,\cdot)_{\beta^i}.
\end{equation*}
Thus, the $V_{L,\BF}$-module structure of $W_i$
does not factor through $V_{L,\BF}/\Rad(\cdot,\cdot)_{V_{L,\BF}}$.
\end{remark}

\subsection{The irreducible $\BN$-graded modules for $V_{L,\BF}/\Rad(\cdot,\cdot)_{V_{L,\BF}}$}

Recall that $\Pi=\{\alpha_1,\ldots,\alpha_n\}$ is the set of simple 
roots of $\mathfrak{sl}(n+1,\BF)$ with respect to the fixed Cartan subalgebra $\fh$.
Then $\fh$ admits a basis $\{h_{\alpha_i}\mid \alpha_i\in \Pi\}$, 
where $h_{\alpha_i}=E_{i,i}-E_{i+1,i+1}$, and each $E_{i,j}$ is a 
standard matrix unit in $M_{n+1}(\BF)$.
Moreover, the identity matrix decomposes as
\begin{equation}\label{eq:In}
I_{n+1}=h_{\alpha_1}+2h_{\alpha_2}+\cdots+nh_{\alpha_n}
\end{equation}
over $\BF$. 
Thus, by \eqref{eq:psl},
\begin{equation}
\mathfrak{psl}(n+1,\BF)=\Big( \fh\oplus (\mathop{\bigoplus}\limits_{\alpha\in \Delta}\BF E_{\alpha})\Big) \Big/\BF(h_{\alpha_1}+2h_{\alpha_2}+\cdots+nh_{\alpha_n}).
\end{equation}
Since $\langle\alpha,\alpha\rangle=2$ for all $\alpha\in \Pi$, 
we have $h_{\alpha}=\frac{2\alpha}{\langle\alpha,\alpha\rangle}=\alpha$.
It follows from \eqref{eq:2} that
\begin{equation}\label{eq:3.20}
h_{\alpha_i}(x)=\Res_{x_1}\epsilon(\alpha_i,-\alpha_i)^{-1}[E_{\alpha_i}(x),E_{-\alpha_i}(x_1)]\quad\text{for all } \alpha_i\in\Pi.
\end{equation}
We obtain the following isomorphism.

\begin{theorem}\label{th:iso-2}
Let $L$ be the root lattice of type $A_n$ with $n+1\equiv 0\pmod{p}$. 
Then the vertex algebra $V_{L,\BF}/\Rad(\cdot,\cdot)_{V_{L,\BF}}$ 
is isomorphic to $L_{\widehat{\mathfrak{psl}}(n+1,\BF)}(1,0)$.
\end{theorem}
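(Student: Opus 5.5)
We follow the strategy of Theorem \ref{th:iso-1}, but now for $\mathfrak{psl}(n+1,\BF)$. The first step is to obtain a $\widehat{\mathfrak{sl}}(n+1,\BF)$-module structure of level one on $\bar V:=V_{L,\BF}/\Rad(\cdot,\cdot)_{V_{L,\BF}}$. The computations \eqref{eq:s-alpha-i-j}--\eqref{eq:3'} give $V_{L,\BF}$ the structure of a level-one $\widehat{\mathfrak{sl}}(n+1,\BF)$-module via \eqref{eq:3.14}; they never use $p\nmid n+1$. By Lemma \ref{lem:Rad} the radical is an ideal, so $\bar V$ inherits this structure.

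Next we show that the central element $I_{n+1}$ acts trivially, so that $\bar V$ is a module for $\widehat{\mathfrak{psl}}(n+1,\BF)$. By \eqref{eq:In}, $I_{n+1}(x)$ acts as $Y(\sum_i i\,s_{\alpha_i,1},x)=Y(s_{\eta,1},x)$. Since $p^r\nmid 1$, Lemma \ref{lem:4.4} and Theorem \ref{th:I-L} give $s_{\eta,1}\in\Rad(\cdot,\cdot)_{V_{L,\BF}}$, so $Y(s_{\eta,1},x)=0$ on $\bar V$. We must also check that the cocycle term is compatible. The affine bracket on $\widehat{\mathfrak{psl}}$ uses the induced form. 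Since $\langle I_{n+1},\fg\rangle=0$ ($\eta$ pairs with each $\alpha_i$ to a multiple of $p$, by \eqref{eq:eta-alpha_i}), the form $\langle\cdot,\cdot\rangle$ descends to $\mathfrak{psl}(n+1,\BF)$. Hence $\widehat{\mathfrak{psl}}(n+1,\BF)=\widehat{\mathfrak{sl}}(n+1,\BF)/(I_{n+1}\otimes\BF[t,t^{-1}])$, and $\bar V$ is a level-one $\widehat{\mathfrak{psl}}$-module. The vacuum is annihilated by $\mathfrak{psl}\otimes\BF[t]$.

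The universal property then gives a $\widehat{\mathfrak{psl}}$-homomorphism $\psi:V_{\widehat{\mathfrak{psl}}(n+1,\BF)}(1,0)\to\bar V$ with $\psi(\1)=\1$, satisfying the analogue of \eqref{eq:3.23}. The argument via \cite[Proposition 5.7.9]{LL} shows that $\psi$ is a vertex algebra homomorphism. Here we use that $V_{\widehat{\mathfrak{psl}}}(1,0)$ is generated by the images of $E_\alpha(-1)\1$, $\alpha\in\pm\Pi$, since the $h_{\alpha_i}$ arise from brackets by \eqref{eq:3.20}. Surjectivity follows from Theorem \ref{th:Mu-th-1}. The map $\psi$ preserves the $\BN$-grading, so $\ker\psi$ is a graded ideal and is contained in the maximal graded ideal $J$. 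By Lemma \ref{lem:simp-quot-latt}, $\bar V$ is simple (as a graded vertex algebra), so $\ker\psi=J$ and $\bar V\cong L_{\widehat{\mathfrak{psl}}(n+1,\BF)}(1,0)$.

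The main obstacle is the second step. We must make precise that the generating field for the image of $I_{n+1}$ is exactly $Y(s_{\eta,1},x)$. We must also verify that the quotient affine algebra and its level-one vacuum module are well defined, with the correct bracket structure constants \eqref{eq:h-h}--\eqref{eq:3} after passing to the quotient; in particular, $[E_\alpha,E_{-\alpha}]=\epsilon(\alpha,-\alpha)\alpha$ must remain consistent modulo $\BF\eta$. A minor point is the simplicity notion: Lemma \ref{lem:simp-quot-latt} gives genuine simplicity, which suffices to force $\ker\psi$ to be maximal among graded ideals.
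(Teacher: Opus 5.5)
Your proposal is correct and follows the paper's proof. The steps match: the level-one $\widehat{\mathfrak{sl}}(n+1,\BF)$-action from \eqref{eq:3.14}, killing $I_{n+1}(x)=Y(s_{\eta,1},x)$ on the quotient, the universal property together with \cite[Proposition 5.7.9]{LL}, surjectivity via Theorem \ref{th:Mu-th-1}, and $\ker\psi=J$ by the simplicity in Lemma \ref{lem:simp-quot-latt}.

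The differences are cosmetic:
\begin{itemize}
\item You obtain $s_{\eta,1}=\sum_j j\,s_{\alpha_j,1}\in\Rad(\cdot,\cdot)_{V_{L,\BF}}$ from Lemma \ref{lem:4.4}, where the paper uses the argument of \cite[Theorem 5]{Mu}. Theorem \ref{th:I-L} is not needed for this.
\item You check explicitly that the invariant form descends to $\mathfrak{psl}(n+1,\BF)$, which the paper leaves implicit.
\end{itemize}
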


\begin{proof}
By the construction in the proof of Theorem \ref{th:iso-1}, 
$V_{L,\BF}$ is an $\widehat{\mathfrak{sl}}(n+1,\BF)$-module under the actions \eqref{eq:3.14}.
Furthermore, since $\det G_{L}=n+1\equiv 0\pmod{p}$, 
Lemma \ref{lem:Rad} shows that
$\operatorname{Rad}(\cdot,\cdot)_{V_{L,\BF}}$ is a nontrivial 
$V_{L,\BF}$-submodule of $V_{L,\BF}$, and thus a nontrivial 
$\widehat{\mathfrak{sl}}(n+1,\BF)$-submodule of $V_{L,\BF}$.
Consequently, the quotient $V_{L,\BF} / \operatorname{Rad}(\cdot,\cdot)_{V_{L,\BF}}$ 
is also an $\widehat{\mathfrak{sl}}(n+1,\BF)$-module.

We then show that $V_{L,\BF}/\Rad(\cdot,\cdot)_{V_{L,\BF}}$ is a 
$\widehat{\mathfrak{psl}}(n+1,\BF)$-module.
It is enough to show that 
$(h_{\alpha_1}+2h_{\alpha_2}+\cdots+nh_{\alpha_n})(x)$
acts trivially on the quotient $V_{L,\BF}/\Rad(\cdot,\cdot)_{V_{L,\BF}}$. 
By equations \eqref{eq:3.20}, \eqref{eq:3.14}, and \eqref{eq:2'}, we have
\begin{align}\label{eq:3.22}
&\?(h_{\alpha_1}+2h_{\alpha_2}+\cdots+nh_{\alpha_n})(x)\nonumber\\
&=\sum_{j=1}^{n}j\epsilon(\alpha_j,-\alpha_j)^{-1}\Res_{x_1}[E_{\alpha_j}(x),E_{-\alpha_j}(x_1)]\nonumber\\
&=\sum_{j=1}^{n}j\epsilon(\alpha_j,-\alpha_j)^{-1}\Res_{x_1}[Y(\iota(e_{\alpha_{j}}),x),Y(\iota(e_{-\alpha_{j}}),x_1)]\nonumber\\
&=\sum_{j=1}^{n}jY(\alpha_{j}(-1)\mathbf{1},x)=Y\Big(\sum_{j=1}^{n}js_{\alpha_{j},1}\mathbf{1},x\Big).
\end{align}
Note that $X=(1, 2,\ldots,n)^{T}$ is a nonzero solution of the system 
of homogeneous linear equations $G_{L}X=0$ over $\BF$, as $\det G_{L}\equiv 0\pmod{p}$.
Then by the argument in the proof of \cite[Theorem 5]{Mu},
we have
\begin{equation*}
\sum_{j=1}^{n}js_{\alpha_j,1}\mathbf{1}\in \Rad(\cdot,\cdot)_{V_{L,\BF}}.
\end{equation*}
It follows from \eqref{eq:3.22} that  $(h_{\alpha_1}+2h_{\alpha_2}+\cdots+nh_{\alpha_n})(x)=0$ on $V_{L,\BF}/\Rad(\cdot,\cdot)_{V_{L,\BF}}$.

Since $V_{L,\BF}/\Rad(\cdot,\cdot)_{V_{L,\BF}}$ is 
an $\widehat{\mathfrak{sl}}(n+1,\BF)$-module of level $1$, 
it is also a  $\widehat{\mathfrak{psl}}(n+1,\BF)$-module of level $1$.
Clearly, $V_{L,\BF}/\Rad(\cdot,\cdot)_{V_{L,\BF}}$ is generated 
by $\mathbf{1}+\Rad(\cdot,\cdot)_{V_{L,\BF}}$ as a $\widehat{\mathfrak{psl}}(n+1,\BF)$-module, 
and $a(m)\mathbf{1}=0\in \Rad(\cdot,\cdot)_{V_{L,\BF}}$ for $m\in \BN$ 
and $a\in \mathfrak{psl}(n+1,\BF)$ (by the same argument as in Theorem \ref{th:iso-1}).
Thus, the universal property of $V_{\widehat{\mathfrak{psl}}(n+1,\BF)}(1,0)$ shows 
that there is a $\widehat{\mathfrak{psl}}(n+1,\BF)$-module homomorphism 
$\psi: V_{\widehat{\mathfrak{psl}}(n+1,\BF)}(1,0)\to V_{L,\BF}/\Rad(\cdot,\cdot)_{V_{L,\BF}}$ such that 
\begin{equation*}
\psi(\mathbf{1})=\mathbf{1}+\Rad(\cdot,\cdot)_{V_{L,\BF}}.
\end{equation*}
Then 
\begin{equation}\label{eq:4.59}
\psi\big(E_{\gamma_1}(m_1)\cdots E_{\gamma_k}(m_k)\mathbf{1}\big)=\iota(e_{\gamma_1})_{m_1}\cdots\iota(e_{\gamma_k})_{m_k}\mathbf{1}+\Rad(\cdot,\cdot)_{V_{L,\BF}}
\end{equation}
for $\gamma_i\in\pm \Pi$, $m_i\in \BZ$, and $k\in \BN$.
It follows from \cite[Proposition 5.7.9]{LL} 
that $\psi$ is in fact a vertex algebra homomorphism.
By \eqref{eq:4.59}, we see that $\psi$ is surjective, and thus
\begin{equation*}
V_{\widehat{\mathfrak{psl}}(n+1,\BF)}(1,0)/\ker\psi \cong V_{L,\BF}/\Rad(\cdot,\cdot)_{V_{L,\BF}}
\end{equation*}
as vertex algebras.
Moreover, recall that $L_{\widehat{\mathfrak{psl}}(n+1,\BF)}(1,0)=V_{\widehat{\mathfrak{psl}}(n+1,\BF)}(1,0)/J$ 
is simple, where $J$ is the maximal graded ideal of
$V_{\widehat{\mathfrak{psl}}(n+1,\BF)}(1,0)$.
By the same argument as in Theorem \ref{th:iso-1}, 
$\ker\psi$ is a graded ideal of $V_{\widehat{\mathfrak{psl}}(n+1,\BF)}(1,0)$.
Then $\ker \psi\subset J$.
Lemma \ref{lem:simp-quot-latt} shows that 
$V_{L,\BF}/\Rad(\cdot,\cdot)_{V_{L,\BF}}$ is a simple vertex algebra.
Therefore,  $V_{L,\BF}/\Rad(\cdot,\cdot)_{V_{L,\BF}}$ is 
isomorphic to $L_{\widehat{\mathfrak{psl}}(n+1,\BF)}(1,0)$ as vertex algebras.
\end{proof}

We now consider the irreducible $\BN$-graded modules for $V_{L,\BF}/\Rad(\cdot,\cdot)_{V_{L,\BF}}$ when $n+1=ap$ (i.e., the case $r=1$).
Theorem \ref{th:iso-2} yields the following result, 
which was suggested to us by Professor Ching Hung Lam.

\begin{theorem}\label{th:clas-type-A-quo-kp}
Let $L$ be the root lattice of type $A_n$, 
where $n+1=ap$ for some $a\in \BZ_+$ with $\gcd(a,p)=1$.
Then $V_{L,\BF}/\Rad(\cdot,\cdot)_{V_{L,\BF}}$ has 
exactly $a$ pairwise inequivalent irreducible $\BN$-graded modules, namely
\begin{equation}\label{eq:irr-quo-mod}
	W_0,W_{p},W_{2p},\ldots,W_{(a-1)p},
\end{equation}
where $W_i$ are defined as in \eqref{eq:W-i}.
\end{theorem}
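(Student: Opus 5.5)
Via Theorem \ref{th:iso-2} we work with $L_{\widehat{\mathfrak{psl}}(n+1,\BF)}(1,0)$ and argue as in the proof of Theorem \ref{th:clas-type-ADE}. The lower bound is already in hand: by Theorem \ref{th:pr-mid-beta-quot-mod} (with $r=1$), the modules $W_0,W_p,\ldots,W_{(a-1)p}$ are $a$ pairwise inequivalent irreducible modules over $V_{L,\BF}/\Rad(\cdot,\cdot)_{V_{L,\BF}}$. Each is $\BN$-graded by the grading inherited from $V_{\beta^{jp}+L,\BF}$, since $\Rad(\cdot,\cdot)_{\beta}$ is graded. It remains to show that there are at most $a$ irreducible $\BN$-graded modules. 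By \cite[Propositions 2.10, 2.11]{JLM}, it suffices to show that $A(L_{\widehat{\mathfrak{psl}}(n+1,\BF)}(1,0))$ has at most $a$ irreducible modules.

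For the upper bound, we first note that $L_{\widehat{\mathfrak{psl}}}(1,0)$ is a quotient of $V^0_{\widehat{\mathfrak{psl}}}(1,0)$. Here $\mathfrak{psl}(n+1,\BF)$ is restricted, because $\BF I_{n+1}$ is a restricted ideal and the $p$-map \eqref{eq:4.30} descends. Theorem \ref{th:JLM} then makes $A(L_{\widehat{\mathfrak{psl}}}(1,0))$ a quotient of $\mathfrak{u}(\mathfrak{psl}(n+1,\BF))$. Lemma \ref{lem:E2=0} transfers: the computation $\iota(e_\alpha)_{-1}\iota(e_\alpha)=0$ holds in $V_{L,\BF}$, hence in the quotient, so $[E_\alpha]^2=0$ in $A$. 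Any irreducible $A$-module is therefore an irreducible $\mathfrak{u}(\mathfrak{psl})$-module killed by all $E_\alpha^2$. Pulling back along $\mathfrak{sl}(n+1,\BF)\to\mathfrak{psl}(n+1,\BF)$ gives an irreducible $\mathfrak{u}(\mathfrak{sl}(n+1,\BF))$-module on which $I_{n+1}$ acts by $0$ and every $E_\alpha^2$ acts by $0$. By Theorem \ref{th:the-class-u(g)-mod} and the claim in the proof of Theorem \ref{th:clas-type-ADE}, which is purely about $\mathfrak{sl}_2$-strings and so holds regardless of $p\mid n+1$, such a module is $L(\mu)$ with $\mu\in\{0,\omega_1,\ldots,\omega_n\}$.

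The remaining step is to impose the central condition. On the highest weight vector of $L(\omega_i)$, $I_{n+1}=\sum_j jh_{\alpha_j}$ acts by $\omega_i(\sum_j jh_{\alpha_j})=i$, consistent with \eqref{eq:eta-beta^i}. Since $I_{n+1}$ is central, it acts on all of $L(\omega_i)$ by the scalar $i \bmod p$. So $L(\omega_i)$ factors through $\mathfrak{psl}$ exactly when $p\mid i$; together with $\mu=0$, this leaves $0,\omega_p,\ldots,\omega_{(a-1)p}$, which is $a$ weights. Hence there are at most $a$ irreducible $\BN$-graded modules, matching the lower bound.

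The main obstacle is making sure the cited results apply to $\mathfrak{psl}$ and not merely to $\mathfrak{sl}$. First, Theorem \ref{th:JLM} must be valid for the restricted Lie algebra $\mathfrak{psl}(n+1,\BF)$; it is stated for general $\fg$ with a $p$-map, so it should. Second, one must check that the Chevalley-basis description survives the quotient. The simplest route is to avoid describing $\mathfrak{u}(\mathfrak{psl})$-modules directly and instead work only through the pullback to $\mathfrak{sl}(n+1,\BF)$, as above. Finally, Theorem \ref{th:the-class-u(g)-mod} is applied to the nonsimple $\mathfrak{sl}(n+1,\BF)$. This is legitimate because Curtis' classification for the group-type restricted Lie algebra does not require $p\nmid n+1$, but it should be noted explicitly.
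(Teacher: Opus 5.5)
Your proposal is correct and follows the paper's proof essentially step for step. The upper bound comes from realizing $A(L_{\widehat{\mathfrak{psl}}(n+1,\BF)}(1,0))$ as a quotient of $\mathfrak{u}(\mathfrak{psl}(n+1,\BF))/\langle E_\alpha^2\rangle$, reducing to $L(0),L(\omega_1),\ldots,L(\omega_n)$ for $\mathfrak{sl}(n+1,\BF)$, and keeping only those on which $I_{n+1}=\sum_j jh_{\alpha_j}$ acts by $0$; the lower bound comes from Theorem \ref{th:pr-mid-beta-quot-mod}. Your pullback formulation is the paper's identification $\mathfrak{u}(\mathfrak{psl}(n+1,\BF))=\mathfrak{u}(\mathfrak{sl}(n+1,\BF))/\mathfrak{u}(\mathfrak{sl}(n+1,\BF))I_{n+1}$ read at the level of modules, and the applicability points you flag (Theorem \ref{th:JLM} for $\mathfrak{psl}$, Curtis' theorem for non-simple $\mathfrak{sl}(n+1,\BF)$) are used implicitly in the paper as well.
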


\begin{proof}
Since $L_{\widehat{\mathfrak{psl}}(n+1,\BF)}(1,0)$ is a 
quotient of $V_{\widehat{\mathfrak{psl}}(n+1,\BF)}^{0}(1,0)$, 
it follows from \cite[Theorem 3.3]{DR} that
$A(L_{\widehat{\mathfrak{psl}}(n+1,\BF)}(1,0))$ is a quotient 
algebra of $A(V_{\widehat{\mathfrak{psl}}(n+1,\BF)}^{0}(1,0))$. 
By Theorem \ref{th:iso-2} and the same arguments as in the proof of Lemma \ref{lem:E2=0},
we also have $[E_{\alpha}]^2=0$ in 
$A(L_{\widehat{\mathfrak{psl}}(n+1,\BF)}(1,0))$ for all $\alpha\in \Delta$. 
Consequently, Theorem \ref{th:JLM} shows 
that $A(L_{\widehat{\mathfrak{psl}}(n+1,\BF)}(1,0))$ is isomorphic to 
a quotient of $\mathfrak{u}(\mathfrak{psl}(n+1,\BF))/\langle (E_{\alpha})^2\mid \alpha\in \Delta\rangle$.

We then consider the irreducible modules of 
$\mathfrak{u}(\mathfrak{psl}(n+1,\BF))/\langle(E_{\alpha})^2\mid \alpha\in \Delta\rangle$.
By \eqref{eq:psl} and the fact that $I_{n+1}$ is a 
central element of $\mathfrak{sl}(n+1,\BF)$, we see that 
\begin{equation}\label{eq:4.58'}
\mathfrak{u}(\mathfrak{psl}(n+1,\BF))=\mathfrak{u}(\mathfrak{sl}(n+1,\BF))/\mathfrak{u}(\mathfrak{sl}(n+1,\BF))I_{n+1}.
\end{equation}
Let $L(\mu)$ be an irreducible module of $\mathfrak{u}(\mathfrak{sl}(n+1,\BF))$.
Then Theorem \ref{th:the-class-u(g)-mod} shows that $L(\mu)$ is a 
highest weight module with the highest weight 
$\mu=\sum_{i=1}^{n}\mu_i\omega_i$, where $0\le \mu_i\le p-1$ for all $i$.
As in the proof of Theorem \ref{th:clas-type-ADE}, 
such an $L(\mu)$ is an irreducible module for 
$\mathfrak{u}(\mathfrak{sl}(n+1,\BF))/\langle(E_{\alpha})^2\mid \alpha\in \Delta\rangle$ 
if and only if the condition \eqref{eq:hig-wei-res} holds.
By Table \ref{tab:ADE-data},
 the irreducible modules of 
$\mathfrak{u}(\mathfrak{sl}(n+1,\BF))/\langle(E_{\alpha})^2\mid \alpha\in \Delta\rangle$ 
are exactly
\begin{equation}\label{eq:irr-mod-usl}
L(0), \;L(\omega_1),\;\ldots,\; L(\omega_n).
\end{equation}
Since $\mathfrak{u}(\mathfrak{psl}(n+1,\BF))/\langle (E_{\alpha})^2\mid \alpha\in \Delta\rangle$ 
is a quotient of $\mathfrak{u}(\mathfrak{sl}(n+1,\BF))/\langle(E_{\alpha})^2\mid \alpha\in \Delta\rangle$, 
the irreducible modules of 
$\mathfrak{u}(\mathfrak{psl}(n+1,\BF))/\langle (E_{\alpha})^2\mid \alpha\in \Delta\rangle$ 
must be among those listed in \eqref{eq:irr-mod-usl}.
Moreover, by \eqref{eq:4.58'} and \eqref{eq:In}, a module $L(\mu)$ listed 
in \eqref{eq:irr-mod-usl} gives an irreducible module for  
$\mathfrak{u}(\mathfrak{psl}(n+1,\BF))/\langle (E_{\alpha})^2\mid \alpha\in \Delta\rangle$ 
if and only if 
\begin{equation*}
\mu(I_{n+1})=\sum_{i=1}^{n}i\mu(h_{\alpha_i})=\sum_{i=1}^{n}i\mu_i=0.
\end{equation*}
Since $n+1=ap$, the irreducible modules of $\mathfrak{u}(\mathfrak{psl}(n+1,\BF))/\langle(E_{\alpha})^2\mid \alpha\in \Delta\rangle$ are 
\begin{equation*}
L(0),L(\omega_{p}),L(\omega_{2p}),\ldots, L(\omega_{(a-1)p}).
\end{equation*}
Consequently, by \cite[Propositions 2.10, 2.11]{JLM}, 
the vertex algebra $L_{\widehat{\mathfrak{psl}}(n+1,\BF)}(1,0)$ 
has at most $a$ irreducible $\BN$-graded modules up to inequivalent.

On the other hand, recall that $V_{L,\BF}/\Rad(\cdot,\cdot)_{V_{L,\BF}}$ 
is an $\BN$-graded vertex algebra, and for $\beta+L\in L^*/L$, 
$V_{\beta+L,\BF}$ carries an $\BN$-graded $V_{L,\BF}$-module 
structure (see Theorem \ref{th:clas-type-ADE}).
By the same argument as for $\Rad(\cdot,\cdot)_{V_{L,\BF}}$, 
one can show that $\Rad(\cdot,\cdot)_{\beta}$ is also 
an $\BN$-graded $V_{L,\BF}$-module, and hence
$V_{\beta+L,\BF}/\Rad(\cdot,\cdot)_{\beta}$ is 
an $\BN$-graded $V_{L,\BF}$-module.
Then, by Theorem \ref{th:pr-mid-beta-quot-mod}, 
the modules listed in \eqref{eq:irr-quo-mod} are pairwise 
inequivalent irreducible $\BN$-graded $V_{L,\BF}/\Rad(\cdot,\cdot)_{V_{L,\BF}}$-modules.
Since $V_{L,\BF}/\Rad(\cdot,\cdot)_{V_{L,\BF}}$ is 
isomorphic to $L_{\widehat{\mathfrak{psl}}(n+1,\BF)}(1,0)$ by 
Theorem \ref{th:iso-2}, the desired result follows.
\end{proof}

\section*{Acknowledgments.} 
The authors are grateful to Professor Haisheng Li for valuable discussions. 
We also thank Professor Ching Hung Lam for kindly informing us---during the 
revision and proofreading of our manuscript---of his joint work \cite{GL} with Robert L. Griess, Jr., 
which partially overlaps with the present paper.


\begin{thebibliography}{FLM}
\bibitem[B]{B1}
R. Borcherds, 
Vertex algebras, Kac-Moody algebras, and the Monster,
{\em Proc. Natl. Acad. Sci. U.S.A.} {\bf 83} (1986), no. 10, 3068--3071.

\bibitem[C]{C}
C. Curtis,
Representations of Lie algebras of classical type with applications to linear groups,
{\em J. Math. Mech.} {\bf 9} (1960), 307--326.

\bibitem[D]{D}
C. Dong,
Vertex algebras associated with even lattices,
{\em J. Algebra} {\bf 161} (1993), 245--265.


\bibitem[DG]{DG}
C. Dong, R. Griess, 
Integral forms in vertex operator algebras which are invariant under finite groups, 
{\em J. Algebra} {\bf 365} (2012), 184--198. 

\bibitem[DLM]{DLM}
C. Dong, H.-S. Li, G. Mason,
Certain associative algebras similar to $U(sl_2)$ and Zhu's algebra $A(V_{L})$,
{\em J. Algebra} {\bf 196} (1997), 532--551. 


\bibitem[DR]{DR}
C. Dong, L. Ren,
Representations of vertex operator algebras over an arbitrary field,
{\em J. Algebra} {\bf 403} (2014), 497--516.

\bibitem[FHL]{FHL}
I. Frenkel, Y.-Z. Huang, J. Lepowsky,
On axiomatic approaches to vertex operator algebras and modules,
{\em Mem. Amer. Math. Soc.} {\bf 104} (1993), no. 494, viii+64 pp.

\bibitem[FLM]{FLM} 
I. Frenkel, J. Lepowsky, A. Meurman,
{\em Vertex Operator Algebras and the Monster},
Pure Appl. Math., vol. 134, Academic Press, Inc., Boston, MA, 1988, liv+508 pp.

\bibitem[FZ]{FZ}
I. Frenkel, Y. Zhu,
Vertex operator algebras associated to representations of affine and Virasoro algebras,
{\em Duke Math. J.} {\bf 66} (1992), no. 1, 123--168.

\bibitem[G]{Griess}
R. Griess,
{\em An Introduction to Groups and Lattices: Finite Groups and Positive Definite Rational Lattices},
Adv. Lect. Math. (ALM), vol. 15, International Press, Somerville, MA; 
Higher Education Press, Beijing, 2011, iv+251 pp.

\bibitem[GL]{GL}
R. Griess, C. Lam,
Lattice vertex algebras over a field of positive characteristic: degenerate cases,
arXiv preprint, arXiv:2607.28933, 2026.


\bibitem[H1]{H1995}
J. Humphreys,
{\em Conjugacy Classes in Semisimple Algebraic Groups}, 
Math. Surveys Monogr., vol. 43, Amer. Math. Soc., Providence, RI, 1995.


\bibitem[H2]{H}
J. Humphreys,
{\em Introduction to Lie Algebras and Representation Theory}, 
Grad. Texts in Math., vol. 9, Springer-Verlag, New York, 1972.

\bibitem[JLM]{JLM}
X. Jiao, H.-S. Li, Q. Mu,
Modular Virasoro vertex algebras and affine vertex algebras,
{\em J. Algebra} {\bf 519} (2019), 273--311.

\bibitem[K]{Kac2}
V. Kac, 
{\em Vertex Algebras for Beginners},
Univ. Lecture Ser., vol. 10,
American Mathematical Society, Providence, RI, 1997, viii+141 pp.

\bibitem[LL]{LL} 
J. Lepowsky, H.-S. Li, 
{\em Introduction to Vertex Operator Algebras and Their Representations}, 
Progr. Math., vol. 227, 
Birkh\"{a}user Boston, Inc., Boston, MA, 2004, xiii+318 pp.

\bibitem[L]{L}
H.-S. Li,
Symmetric invariant bilinear forms on vertex operator algebras,
{\em J. Pure Appl. Algebra} {\bf 96} (1994), no. 3, 279--297.


\bibitem[LM1]{LM}
H.-S. Li, Q. Mu,
Heisenberg VOAs over fields of prime characteristic and their representations,
{\em Trans. Amer. Math. Soc.} {\bf 370} (2018), no. 2, 1159--1184.

\bibitem[LM2]{LM2}
H.-S. Li, Q. Mu, 
Symmetric invariant bilinear forms on modular vertex algebras,
{\em J. Algebra} {\bf 513} (2018), 435--465.

\bibitem[LW]{LW}
H.-S. Li, Q. Wang,
On vertex algebras and their modules associated with even lattices,
{\em J. Pure Appl. Algebra} {\bf 213} (2009), no. 6, 1097--1111.   

\bibitem[Mc]{McRae}
R. McRae, 
On integral forms for vertex algebras associated with affine Lie algebras and lattices,
{\em J. Pure Appl. Algebra} {\bf 219} (2015), no. 4, 1236--1257.   

\bibitem[M]{Mu}
Q. Mu,
Lattice vertex algebras over fields of prime characteristic,
{\em J. Algebra} {\bf 417} (2014), 39--51.


\bibitem[SF]{SF}
H. Strade, R. Farnsteiner, 
{\em Modular Lie Algebras and Their Representations},
Monogr. Textbooks Pure Appl. Math., vol. 116,
Marcel Dekker, Inc., New York, 1988, x+301 pp.

\bibitem[ZM]{ZM}
H. Zhao, Q. Mu,
Lattice vertex superalgebras and their modules over fields of prime characteristic,
{\em J. London Math. Soc.} {\bf 114} (2026), e70653.

\bibitem[Z]{Zhu1996}
Y. Zhu,
Modular invariance of characters of vertex operator algebras,
{\em J. Amer. Math. Soc.} {\bf 9} (1996), no. 1, 237--302.
\end{thebibliography}
\end{document}